\begin{document}
\title[Sol. to arith. diff. eq. in alg. closed fields]{Solutions to arithmetic differential equations\\
in algebraically closed fields}

\author{Alexandru Buium}
\address{Department of Mathematics and Statistics,
University of New Mexico, Albuquerque, NM 87131, USA}
\email{buium@math.unm.edu} 

\author{Lance Edward Miller}
\address{Department of Mathematical Sciences,  309 SCEN,
University of Arkansas, 
Fayetteville, AR 72701}
\email{lem016@uark.edu}

\def \d{\delta}
\def \ra{\rightarrow}
\def \bZ{{\mathbb Z}}
\def \cO{{\mathcal O}}
\newcommand{\Hom}{\operatorname{Hom}}

\newcommand{\OCp}{ {\mathbb C}_p^{\circ}  }

\newtheorem{THM}{{\!}}[section]
\newtheorem{THMX}{{\!}}
\renewcommand{\theTHMX}{}
\newtheorem{theorem}{Theorem}[section]
\newtheorem{corollary}[theorem]{Corollary}
\newtheorem{lemma}[theorem]{Lemma}
\newtheorem{proposition}[theorem]{Proposition}
\theoremstyle{definition}
\newtheorem{definition}[theorem]{Definition}
\theoremstyle{remark}
\newtheorem{remark}[theorem]{Remark}
\newtheorem{example}[theorem]{\bf Example}
\numberwithin{equation}{section}
\subjclass[2000]{11 F 32, 11 F 85, 11G18}
\maketitle

\begin{abstract}
We prove that the ``main examples" in the theory of arithmetic differential equations \cite{book}
possess a remarkable ``total differential overconvergence property". This allows one to consider solutions to these equations with coordinates in  algebraically closed fields.
\end{abstract}

\section{Introduction}

Arithmetic differential equations \cite{siegel, char, difmod, book} are an analogue of usual differential equations in which functions  are replaced by numbers  and the derivation
operator  is replaced by a {\it $p$-derivation} (Fermat quotient operator) acting on a ring of numbers. Typically, one takes the ring of numbers
 to be $R:=\widehat{\bZ_p^{\text{ur}}}$ and then
 arithmetic differential equations can be viewed as certain functions
\begin{equation}
\label{voxvox}
f:V(R)\ra R\end{equation}
defined on sets of $R$-points of various smooth schemes $V$ over $R$; 
these functions, referred to as {\it $\d$-functions} are given by  limits of polynomials in affine coordinates and their iterated $p$-derivatives up to a certain fixed order.
The preimage $f^{-1}(0)$ can be interpreted as the set of {\it unramified} solutions to $f$.

In this paper we introduce a class of functions $V(R^{\text{alg}})\ra K^{\text{alg}}$, referred to as $\d^{\text{alg}}$-{\it functions}, where $K^{\text{alg}}$ is the algebraic closure of 
$K:=R[1/p]$ and $R^{\text{alg}}$ is the valuation ring of $K^{\text{alg}}$. 
As with $\d$-functions, 
$\d^{\text{alg}}$-functions 
have the following ``analytic continuation" property which makes them behave like ``global objects": if a $\d^{\text{alg}}$-function $g$ vanishes on a $p$-adic ball in $V(R^{\text{alg}})$ and $V$ has a Zariski connected reduction mod $p$ then $g$ vanishes everywhere. 
We will also distinguish among $\d^{\text{alg}}$-functions a subclass of functions which we call {\it tempered} $\d^{\text{alg}}$-functions. Tempered 
$\d^{\text{alg}}$-functions are distinguished among $\d^{\text{alg}}$-functions by asking that 
 the $p$-adic absolute value  of the value of the function at any point be bounded by a constant times a power of the ramification index of that point.

The aim of this paper is to show that the ``main examples" of arithmetic differential equations
\ref{voxvox} in the theory can be uniquely extended to tempered $\d^{\text{alg}}$-functions
\begin{equation}
\label{voxvoxvox}
f^{\text{alg}}:V(R^{\text{alg}})\ra K^{\text{alg}}.\end{equation}
The preimage $(f^{\text{alg}})^{-1}(0)$ can be interpreted as 
the set of {\it algebraic} (hence arbitrarly ramified)  solutions to $f$. The association $f\mapsto f^{\text{alg}}$ will commute with the ring operations and $p$-derivations and hence will preserve all ``differential algebraic properties". In particular, it will send {\it $\d$-characters} \cite{char} into $\d$-characters and   {\it isogeny covariant $\d$-modular forms} \cite{difmod,book} into isogeny covariant $\d$-modular forms. 

Now $K^{\text{alg}}$ is, of course, dense in the $p$-adic complex field ${\mathbb C}_p$ and one can ask if our functions $f^{\text{alg}}$ in \ref{voxvoxvox} can be further extended to continuous functions
\begin{equation}
\label{voxodrom}
f^{{\mathbb C}_p}:V(\OCp)\ra {\mathbb C}_p,\end{equation}
where $\OCp$ is the valuation ring of ${\mathbb C}_p$. 
This does not seem to be automatic for arbitrary $\d^{\text{alg}}$-functions (or even for the tempered ones) but we will prove this is the case for the
 $\d^{\text{alg}}$-functions  \ref{voxvoxvox} arising from some of the main examples of $\d$-functions \ref{voxvox} in the theory.  The extension \ref{voxodrom} of a $\d^{\text{alg}}$-function, if it exist, is unique and 
 will be referred to as a $\d^{{\mathbb C}_p}$-function.
  Except in trivial cases, $\d^{{\mathbb C}_p}$-functions are not locally analytic, in particular they are not rigid analytic. But, of course, $\d^{{\mathbb C}_p}$-functions  have the same ``analytic continuation" property as before: if they vanish on a ball then they vanish everywhere.

Here is the heuristic behind the theory in the present paper.
Roughly speaking  what happens when one passes from $R$ to  $R^{\text{alg}}$ 
is that $p$-derivatives of numbers in ramified extensions of $R$ ``acquire denominators"; this phenomenon is entirely similar to what happens in usual calculus where the derivative of a ramified algebraic function  ``acquires denominators"; e.g.
\begin{equation}
\label{ubu}
\frac{d}{dx}(\sqrt{x})=\frac{1}{2\sqrt{x}}.\end{equation}
The presence of such denominators in the arithmetic setting potentially introduces divergences in the  $p$-adic series of the theory and should be, in principle, an obstacle to developing the theory  in the arbitrarily ramified case.
The main point of the present paper is to show that this potential obstacle can be overcome
for  the ``main examples" of the theory. The main idea is to ``rescale"  the $p$-derivation every time we increase the ramification; naively speaking this corresponds, in the case of classical calculus, to passing from the equality \ref{ubu} to the equality
\begin{equation}
\label{uburoi}
\frac{d}{d\sqrt{x}}(\sqrt{x})=1.\end{equation}
Rescaling the $p$-derivation, in the arithmetic setting, will lead us to revisit the concept of {\it differential overconvergence} introduced in
\cite{over}; the main point then will be to show that one can  
drop, ``everywhere" in \cite{over}, the requirement of ``bounded ramification." 
That this is possible is indicated already by  a construction of aribitrarly ramified ``$\d$-characters" in \cite{newforms}. On the other hand it would be interesting to see if 
the  ``unbounded ramification" theory of the present paper can be made to interact with 
 the Borger-Saha construction of isocrystals in  \cite{borgersaha}.

For the convenience of the reader the main concepts and results of the present paper will be presented without any reference to \cite{char, difmod, book, over, newforms}; some references to these papers will be made, however, in our proofs.

\bigskip

{\bf Acknowledgement}. The first author 
is grateful to Max Planck Institute for Mathematics in Bonn for its hospitality and financial support; to the Simons Foundation  for support through awards 311773 and
615356; and to  A. Saha for enlightening conversations.

\bigskip

\bigskip

\section{Main concepts and results}

\subsection{Basic fields and rings}\label{meetwad}
Throughout  this paper $p\in \bZ$ is a fixed prime which we assume for technical reasons to be at least $5$. Consider the diagram of valued  fields:

\begin{center}
\begin{tikzpicture}

\node (LT) at (-1,0) {$ {\mathbb Q}_p^{\text{ur}} $};
\node (RT) at (1,0){$ {\mathbb Q}_p^{\text{alg}} $};
\node (LB) at (-1,-1){$ K $};
\node (RB) at (1,-1){$ K^{\text{alg}} $};

\draw[->] (LT) edge (RT);
\draw[->] (LT) edge (LB);
\draw[->] (RT) edge (RB);
\draw[->] (LB) edge (RB);

\end{tikzpicture}
\end{center}


where ${\mathbb Q}_p^{\text{alg}}$ is an algebraic closure of ${\mathbb Q}_p$,
${\mathbb Q}_p^{\text{ur}}$  is the maximum unramified extension of ${\mathbb Q}_p$
inside ${\mathbb Q}_p^{\text{alg}}$, $K$ is the metric completion of ${\mathbb Q}_p^{\text{ur}}$ and $K^{\text{alg}}$ is the algebraic closure of $K$ in the metric completion ${\mathbb C}_p$
of ${\mathbb Q}_p^{\text{alg}}$. Of course $K^{\text{alg}}$ coincides with the maximum totally ramified extension $K^{\text{tot}}$ of $K$. Also,
by Krasner's Lemma,  we have $K^{\text{alg}}:=K{\mathbb Q}_p^{\text{alg}}$,
the compositum of $K$ and ${\mathbb Q}_p^{\text{alg}}$; cf.  \cite[pg. 149, Prop. 5]{bosch}.
We denote by $v_p$ the valuation on ${\mathbb C}_p$ normalized by the condition $v_p(p)=1$ and for $a\in {\mathbb C}_p$ we denote by $|a|_p=p^{-v_p(a)}$ the absolute value.
We denote by
\begin{equation}
\label{aleluia}
{\mathbb Z}_p^{\text{ur}}, {\mathbb Z}_p^{\text{alg}}, R, R^{\text{alg}}, \OCp
\end{equation}
the valuation rings of 
\begin{equation}
{\mathbb Q}_p^{\text{ur}}, {\mathbb Q}_p^{\text{alg}}, K, K^{\text{alg}}, {\mathbb C}_p\end{equation}
respectively.
In particular, 
$$R:=\widehat{{\mathbb Z}_p^{\text{ur}}},\ \ \ \OCp=\widehat{R^{\text{alg}}}.$$
 Here and later the symbol $\widehat{\ }$
always denotes $p$-adic completion of rings or schemes.
  We denote by $k$ the common residue field of the rings \ref{aleluia}; so $k$ is  an algebraic closure of ${\mathbb F}_p$.
The ring $R$ possesses  a unique automorphism $\phi:R\ra R$ whose reduction mod $p$ is the $p$-power Frobenius on $k$. Denote by $\phi:K\ra K$ the induced automorphism of $K$ and
 fix, throughout the paper, an  automorphism (which we still denote by $\phi$) of $K^{\text{alg}}$
extending the automorphism $\phi$ of $K$. 
Since $K$ is complete $v_p$ on $K^{\text{alg}}$ is the unique valuation extending $v_p$ on $K$  so $v_p\circ \phi=v_p$ on $K^{\text{alg}}$ and hence the automorphism $\phi$ of $K^{\text{alg}}$  induces an automorphism (still denoted by $\phi$) of $R^{\text{alg}}$.

Denote by $\Pi$ the set of all roots $\pi$ in ${\mathbb Q}_p^{\text{alg}}$ of Eisenstein polynomials with coefficients in $\bZ_p^{\text{ur}}$ having the property that the extension
${\mathbb Q}_p(\pi)/{\mathbb Q}_p$ is Galois. 
So ${\mathbb Q}_p^{\text{alg}}$ is obtained from ${\mathbb Q}_p^{\text{ur}}$ by adjoining the set $\Pi$.
For any $\pi\in \Pi$ write $K_{\pi}=K(\pi)$, let 
$R_{\pi}=R[\pi]$ be the valuation ring of $K_{\pi}$, and set $e(\pi):=[K_{\pi}:K]$,
the degree (and also the ramification index) of $\pi$ over $K$. Also we write $\pi|\pi_0$ if and only if $K_{\pi_0}\subset K_{\pi}$. 
Note that, by above discussion, we have
\begin{equation}
\label{caldura}
K^{\text{alg}}=\bigcup_{\pi\in \Pi} K_{\pi}\end{equation}
and the maximal ideal of $R^{\text{alg}}$ is generated by $\Pi$. 
Clearly for $\pi\in \Pi$ the field $K_{\pi}$ is mapped into itself by $\phi:K^{\text{alg}}\ra K^{\text{alg}}$ and we have an induced automorphism
$$\phi:R_{\pi}\ra R_{\pi}$$ inducing the $p$-power Frobenius on $R_{\pi}/\pi R_{\pi}=k$.

 \subsection{$\d$-functions and $\d_{\pi}$-functions \cite{char,difmod,book}}
For $\pi\in \Pi$ recall the {\it Fermat quotient operator} \cite{char} 
 $$\d_{\pi}:R_{\pi}\ra R_{\pi},\ \ \ 
\d_{\pi} x  :=  \frac{\phi(x)-x^p}{\pi}, \ \ \  x\in R_{\pi}.$$
This is a special case of what in \cite{char, analogues} was called a $\pi$-derivation, cf. our discussion in Section \ref{ark} below. The setting in \cite{char} was such that $\pi$ was assumed to be totally ramified  over ${\mathbb Q}_p$ but we will not impose this condition here.
In particular, for $\pi=p$ we have a Fermat quotient operator denoted simply by
$$\d:=\d_p:R\ra R.$$

\begin{definition} \cite[pg. 317]{char}
\label{charrrr}
Let $V$ be an affine smooth scheme over $R$ and fix a closed embedding $V \subset {\mathbb A}^d$ over $R$.
A function 
$$f_{\pi}:V(R_{\pi})\ra R_{\pi}$$ is called a $\d_{\pi}$-{\it function} of order $r\geq 0$ on $V$ if there exists a restricted power series $F_{\pi}\in R_{\pi}[x,x',...,x^{(r)}]^{\widehat{\ }}$, where $x,x',...,x^{(r)}$ are $d$-tuples of variables,  such that
\begin{equation}
f_{\pi}(a)=F_{\pi}(a,\d_{\pi} a,...,\d_{\pi}^r a), \ \ a \in V(R_{\pi})\subset R_{\pi}^d.\end{equation}
If $V$ is not necessarily affine $f_{\pi}$ is called a {\it $\d_{\pi}$-function} if its restriction to the $R_{\pi}$-points of any affine subset of $V$ is a $\d_{\pi}$-function.
If $V=G$ is a group scheme  a {\it $\d_{\pi}$-character} of $G$ is a $\d_{\pi}$-function $G(R_{\pi})\ra R_{\pi}$ which is also a group homomorphism into the additive group of $R_{\pi}$.  \end{definition}

The concept above is independent of the embedding. We denote by $\cO^r_{\pi}(V)$ the ring of $\d_{\pi}$-functions on $V$ of order $r\geq 0$.
The association $U\mapsto \cO_{\pi}^r(U)$ for $U\subset V$ Zariski open is a sheaf on $V$. For $\pi=p$,   $\d_p$-functions will be referred to as {\it $\d$-functions} and we write $f=f_p$,
$$f:V(R)\ra R.$$
 Also  we write $\cO^r(V):=\cO^r_p(V)$. The $\d_p$-characters will be simply called $\d$-{\it characters}. 

\subsection{Total $\d$-overconvergence} Recall from \cite[Def. 2.4]{over} the following:

\begin{definition}
Let $f\in \cO^r(V)$, $f:V(R)\ra R$, be a $\d$-function on $V$ of order $r$ and let $\pi\in \Pi$.  We will say that $f$ is $\d_{\pi}$-{\it overconvergent} if there exists an integer $\nu\geq 0$ and a $\d_{\pi}$-function $f_{\pi,\nu}$ on $V$ of order $r$  making the diagram below commutative:
\begin{equation}
\label{pantof2}
\begin{tikzpicture}[baseline=(current bounding box.center)]

\node (LT) at (-1,0) {$ V(R) $};
\node (RT) at (1,0){$ R $};
\node (LB) at (-1,-1){$ V(R_\pi) $};
\node (RB) at (1,-1){$ R_{\pi} $};

\draw[->] (LT) edge node[above]{$p^\nu f$}(RT);
\draw[->] (LT) edge  (LB);
\draw[->] (RT) edge (RB);
\draw[->] (LB) edge node[above]{$f_{\pi,\nu}$} (RB);

\end{tikzpicture}
\end{equation}

where the vertical arrows  are the natural inclusions.
The smallest $\nu$ for which there is a $f_{\pi,\nu}$ as above is called the {\it polar order} of $f$.
\end{definition}

\begin{remark}\label{bubule} It will be checked later, in  Remark \ref{bibi}, that for a given $f$ and $\nu$, the map $f_{\pi,\nu}$ in Diagram \ref{pantof2} is unique. \end{remark}

\begin{remark} \label{sirpac}
By the uniqueness property in Remark \ref{bubule} we have that if 
 \ref{pantof2} holds for some $\nu$ and a necessarily unique $f_{\pi,\nu}$ it also holds for $
 \nu+1$ with a necessarily unique $f_{\pi,\nu+1}$. We have an equality of maps
$$p f_{\pi,\nu}=f_{\pi,\nu+1}:V(R_{\pi})\ra R_{\pi}$$ 
hence an equality of maps
$$p^{-\nu} f_{\pi,\nu}=p^{-(\nu+1)}f_{\pi,\nu+1}:V(R_{\pi})\ra K_{\pi}.$$
We set
$$p^{-*}f_{\pi,*}:=p^{-\nu} f_{\pi,\nu}:V(R_{\pi})\ra p^{-\nu}R_{\pi}\subset K_{\pi},\ \ \ \text{for}\ \ \ \nu>>0;$$
hence we have a commutative diagram
\begin{equation}
\label{pantof222}
\begin{tikzpicture}[baseline=(current bounding box.center)]

\node (LT) at (-1,0) {$ V(R) $};
\node (RT) at (2,0){$ R $};
\node (LB) at (-1,-1){$ V(R_\pi) $};
\node (RB) at (2,-1){$ K_{\pi} $};

\draw[->] (LT) edge node[above]{$f$}(RT);
\draw[->] (LT) edge  (LB);
\draw[->] (RT) edge (RB);
\draw[->] (LB) edge node[above]{$p^{-*}f_{\pi,*}$} (RB);

\end{tikzpicture}
\end{equation}

Moreover if $\pi|\pi_0$ and $f$ is both $\d_{\pi}$-overconvergent and $\d_{\pi_0}$-overconvergent then
we will later check (cf.  Remark \ref{bibi}) that 
we have a commutative diagram
\begin{equation}
\label{pantof2222}
\begin{tikzpicture}[baseline=(current bounding box.center)]

\node (LT) at (-1,0) {$ V(R_{\pi_0}) $};
\node (RT) at (2,0){$ K_{\pi_0} $};
\node (LB) at (-1,-1){$ V(R_\pi) $};
\node (RB) at (2,-1){$ K_{\pi} $};

\draw[->] (LT) edge node[above]{$p^{-*}f_{\pi_0,*}$}(RT);
\draw[->] (LT) edge  (LB);
\draw[->] (RT) edge (RB);
\draw[->] (LB) edge node[above]{$p^{-*}f_{\pi,*}$} (RB);

\end{tikzpicture}
\end{equation}

\end{remark}

\begin{definition} Let
 $f\in \cO^r(V)$, $f:V(R)\ra R$, be a $\d$-function.
 
 1) $f$ is {\it totally $\d$-overconvergent} if it is $\d_{\pi}$-overconvergent for all $\pi\in \Pi$. 
 
 2) Assume $f$ is totally $\d$-overconvergent and let $\lambda:[1,\infty)\ra [0,\infty)$ be a  real function. We say $f$ has  {\it polar order bounded by} 
$\lambda(x)$ if for any $\pi\in \Pi$,
$f$ is $\d_{\pi}$-overconvergent with polar order at most 
$\lambda(e(\pi))$.

3) Assume $f$ is totally $\d$-overconvergent. We say $f$ is {\it tempered} if 
there exist positive real constants $\kappa_1,\kappa_2$, depending on $f$, such that $f$ has polar order bounded
by the function 
$$\lambda(x)=\kappa_1 \log x +\kappa_2,$$
where $\log$ is the natural logarithm.
\end{definition}

\begin{remark} The sum and product of any two totally $\d$-overconvergent $\d$-functions with polar order bounded by $\lambda_1(x)$ and $\lambda_2(x)$ is totally $\d$-overconvergent  with polar order bounded by 
$$\max\{\lambda_1(x),\lambda_2(x)\}\ \ \ \text{and}\ \ \ \lambda_1(x)+\lambda_2(x),$$ 
respectively. Also any 
 function $V(R)\ra R$  induced by a morphism of $R$-schemes $V\ra {\mathbb A}^1$ is clearly totally $\d$-overconvergent  with polar order bounded by the function $\lambda(x)=0$.  Clearly if $f$ is totally $\d$-overconvergent  with polar order bounded by  $\lambda(x)$ and  $g:Y(R)\ra V(R)$ is induced by a morphisms of $R$-schemes $Y \ra V$ then $f\circ g$ is totally $\d$-overconvergent with polar order bounded by the same $\lambda(x)$.  Also if $f$ is totally $\d$-overconvergent with polar order bounded by $\lambda(x)$ then the composition $\d f:=\d\circ f$ is totally $\d$-overconvergent with polar order bounded by $\lambda(x)+1$.
 Consider the rings
 $$\cO^r(V)^{\dagger \dagger}\subset \cO^r(V)^{\dagger}\subset \cO^r(V)$$
 where $\cO^r(V)^{\dagger}$ is the subring of all totally $\d$-overconvergent $\d$-functions
in $\cO^r(V)$ and $\cO^r(V)^{\dagger\dagger}$ is the subring of all tempered 
functions in $\cO^r(V)^{\dagger}$.
 By the above we have
$$\d\cO^r(V)^{\dagger}\subset \cO^{r+1}(V)^{\dagger},\ \ \ \d\cO^r(V)^{\dagger\dagger}\subset \cO^{r+1}(V)^{\dagger\dagger}.$$ 
In particular, the ring $\cO^r(V)^{\dagger\dagger}$   contains all the elements of the form $\d^i f$ with $i \leq r$ and $f \in \cO(V)$; so it follows from \ref{ident1} below that if $V$ is affine then $\cO^r(V)^{\dagger\dagger}$, hence also $\cO^r(V)^{\dagger}$,  is $p$-adically dense in $\cO^r(V)$. Finally, for any $V$,  the presheaves $\cO_V^{r\dagger}$ and $\cO_V^{r\dagger\dagger}$ on $V$, defined by 
 $$U\mapsto \cO^r(U)^{\dagger},\ \ \ U\mapsto \cO^r(U)^{\dagger\dagger}$$ for $U\subset V$ Zariski open are subsheaves of the sheaf 
 $\cO_V^r$ defined by $U\mapsto \cO^r(U)$. As we will see, if $V/R$ has relative dimension $\geq 1$ and $n\geq 1$ the sheaf inclusion
 \begin{equation}
 \label{notaniso}
 \cO_V^{r\dagger}\ra \cO^r_V
 \end{equation}
 is never an isomorphism; cf.  Remark \ref{bibi}, assertion 3.
\end{remark}

\subsection{$\d^{\text{alg}}$-functions} We now come to the main property of interest.

 \begin{definition}
 Let $V$ be a scheme of finite type over $R$.
 A function $$g:V(R^{\text{alg}})\ra K^{\text{alg}}$$ is a $\d^{\text{alg}}${\it -function}
if for any $\pi \in \Pi$ there exists an integer $\nu\geq 0$ and a $\d_{\pi}$-function
$g_{\pi,\nu}:V(R_{\pi})\ra R_{\pi}$ such that the following diagram is commutative.
\begin{center}
\begin{tikzpicture}

\node (LT) at (-1,0) {$ V(R_{\pi}) $};
\node (RT) at (2,0){$ K_{\pi} $};
\node (LB) at (-1,-1){$ V(R^{\text{alg}}) $};
\node (RB) at (2,-1){$ K^{\text{alg}} $};

\draw[->] (LT) edge node[above]{$p^{-\nu}g_{\pi,\nu}$}(RT);
\draw[->] (LT) edge  (LB);
\draw[->] (RT) edge (RB);
\draw[->] (LB) edge node[above]{$g$} (RB);

\end{tikzpicture}
\end{center}

We say $g$ is {\it tempered} if 
 there exist positive real constants $c_1, c_2$, depending on $g$, such that
$$|g(P)|_p\leq c_1 \cdot e(P)^{c_2},\ \ \ P\in V(R^{\text{alg}})$$
where 
$$e(P):=\min \{e(\pi);\ \pi\in \Pi,\ P\in V(R_{\pi})\}.$$
\end{definition}

 \begin{remark}\label{nicelunch}
 \ 
 
 1) Using  the Equality \ref{caldura} and Remark \ref{sirpac} we see that  any  totally $\d$-overconvergent $\d$-function $$f:V(R)\ra R$$ 
  extends uniquely to 
a $\d^{\text{alg}}$-function
\begin{equation}
\label{finfty}
f^{\text{alg}}:V(R^{\text{alg}})\ra K^{\text{alg}}.\end{equation}
If $f$ is tempered then $f^{\text{alg}}$ is tempered.
Conversely, if a $\d$-function $f:V(R)\ra R$ can be extended to a $\d^{\text{alg}}$-function
then $f$ is totally $\d$-overconvergent. 
By the uniqueness of $f^{\text{alg}}$ above the following properties  hold. These will be used repeatedly later. 

2) For any totally $\d$-overconvergent $\d$-functions $f,g:V(R)\ra R$ and any $\lambda\in R$.
$$(f+g)^{\text{alg}}=f^{\text{alg}}+g^{\text{alg}},\ \ (fg)^{\text{alg}}=f^{\text{alg}}g^{\text{alg}},\ \ \ 
(\lambda \cdot f)^{\text{alg}}=\lambda\cdot f^{\text{alg}},\ \ \ (\phi  f)^{\text{alg}}=\phi f^{\text{alg}}.$$
In particular, if $F$ is a polynomial with $R$-coefficients in a number of variables
and $(f_j)$ is a finite family of $\d$-overconvergent $\d$-functions $V(R)\ra R$ such that
the following polynomial relation between the $\phi^i f_j$'s is satisfied
$$F(...,\phi^i f_j,...)=0\ \ \ \text{on}\ \ \ V(R)$$
 then the same polynomial relation between the $\phi^i f_j^{\text{alg}}$ is satisfied:
$$F(..., \phi^i f_j^{\text{alg}},...)=0\ \ \ \text{on}\ \ \ V(R^{\text{alg}}).$$

3)  For any map $g:X(R)\ra V(R)$ induced by a morphism of $R$-schemes $X\ra V$ 
and for any totally $\d$-overconvergent $\d$-function $f:V(R)\ra R$ we have
$$(f\circ g)^{\text{alg}}=f^{\text{alg}}\circ g^{\text{alg}}:X(R^{\text{alg}})\ra K^{\text{alg}},$$
where $g^{\text{alg}}:X(R^{\text{alg}})\ra V(R^{\text{alg}})$ is the map induced by $X\ra V$ on $R^{\text{alg}}$-points.

4) Let $V=G$ be a group scheme and let $f:G(R)\ra R$ be a  {\it $\d$-character} of $G$. Assume $f$
 is a totally $\d$-overconvergent $\d$-function. We claim that
 $$f^{\text{alg}}:G(R^{\text{alg}})\ra R^{\text{alg}}$$
  is also a homomorphism. 
Indeed,  if 
$$\mu:G\times G\ra G$$
 is the multiplication map the claim above  follows from the uniqueness of $g^{\text{alg}}$ where 
$$g:(G\times G)(R)\ra R,\ \ g(P,Q):=f(\mu(P,Q))-f(P)-f(Q).$$

5) Let $G\times V\ra V$ be an action of a smooth group scheme $G$ over $R$ on a smooth $R$-scheme $V$ and let  $f:V(R)\ra R$ and $\chi:G(R)\ra  R$ be totally $\d$-overconvergent $\d$-functions  such that
$$f(gP)=\chi(g)f(P),\ \ g\in G(R),\ \ P\in V(R).$$
We have 
\begin{equation}
\label{ohgood}
f^{\text{alg}}(gP)=\chi^{\text{alg}}(g)f^{\text{alg}}(P),\ \ g\in G(R^{\text{alg}}),\ \ P\in V(R^{\text{alg}}).\end{equation}

6) Let 
$g_1, g_2:X(R)\ra V(R)$
be two maps defined by morphisms $X\ra V$ of $R$-schemes.
Let $\lambda\in R$ and let
$f:V(R)\ra R$
be a totally $\d$-overconvergent $\d$-function such that 
\begin{equation}
\label{of}
f\circ g_1=\lambda\cdot f\circ g_2:X(R)\ra R.\end{equation}
 Then 
\begin{equation}
\label{ofof}
f^{\text{alg}}\circ g_1^{\text{alg}}=\lambda\cdot f^{\text{alg}}\circ g_2^{\text{alg}}:X(R^{\text{alg}})\ra K^{\text{alg}}.\end{equation}
\end{remark}

\begin{remark}  Throughout this paper we fix once and for all, a prime $p$ and the automorphism $\phi$ of $K^{\text{alg}}$; all other objects are determined by these.
 The choice of $\phi$ on $K^{\text{alg}}$ is of course non-canonical; one can make it ``more" canonical by insisting that $\phi$ is the identity on the maximal totally ramified extension ${\mathbb Q}_p^{\text{tot}}$ of ${\mathbb Q}_p$ in ${\mathbb Q}_p^{\text{alg}}$.
   On a similar note one can ask about the dependence of 
our concepts on the choice of $\phi$ on $K^{\text{alg}}$.
Let $\phi^{(1)},\phi^{(2)}$ be two automorphisms of $K^{\text{alg}}$ extending the automorphism $\phi$ of $K$ and for $\pi\in \Pi$ denote by 
$$\d_{\pi}^{(1)},\d_{\pi}^{(2)}:R_{\pi}\ra R_{\pi}$$
the corresponding $\pi$-derivations. Consider the $K$-automorphism $\sigma$ of $K^{\text{alg}}$ defined by
$$\sigma:=\phi^{(2)}\circ (\phi^{(1)})^{-1}:K^{\text{alg}}\ra K^{\text{alg}}$$
and define the operator
$$\d^{(12)}_{\pi}:R_{\pi}\ra R_{\pi},\ \ \d^{(12)}_{\pi} a:=\frac{\sigma a -a}{\pi},\ \ a\in R_{\pi}.$$
Clearly $\d^{(12)}_{\pi}$ is additive and satisfies the identity
$$\d_{\pi}^{(12)}(ab)=a(\d_{\pi}^{(12)}b)+b(\d_{\pi}^{(12)}a)+\pi(\d_{\pi}^{(12)}a)(\d_{\pi}^{(12)}b),\ \ \ a,b\in R_{\pi}.$$
This is an example of what in \cite{analogues} was called a $\pi$-{\it difference operator}.
We have the following formula:
$$\d_{\pi}^{(2)} a=\frac{\sigma \pi}{\pi} \d_{\pi}^{(1)} a + (\sigma \pi) \d_{\pi}^{(12)}\d_{\pi}^{(1)} a+
\d_{\pi}^{(12)} (a^p),\ \ \ a\in R_{\pi}.$$ 
In particular, any $\d_{\pi}^{(2)}$-function $f_{\pi}:V(R_{\pi})\ra R_{\pi}$ of order $r$ on a smooth affine scheme $V\subset {\mathbb A}^d$ can be represented, in affine coordinates, as
$$f_{\pi}(a)=G(...,(\d_{\pi}^{(12)})^{\alpha_1}(\d_{\pi}^{(1)})^{\beta_1}...
(\d_{\pi}^{(12)})^{\alpha_s}(\d_{\pi}^{(1)})^{\beta_s},...),\ \ \ a\in V(R_{\pi})\subset R_{\pi}^d,$$
where $G$ is a restricted power series with coefficients in $R_{\pi}$ and
$$\sum_{i=1}^s(\alpha_i+\beta_i)\leq 2r,\ \ \ \alpha_i,\beta_i\geq 0.$$
Morally, if one ``adds" $\pi$-difference operators  to the picture then 
the theories for $\phi^{(1)}$ and $\phi^{(2)}$ are related in a simple way.
This can be, of course, formalized but we will not pursue this formalism here.
\end{remark}

\subsection{$\d^{{\mathbb C}_p}$-functions}

\begin{definition}
A function 
$h:V(\OCp)\ra {\mathbb C}_p$
is called a $\d^{{\mathbb C}_p}$-{\it function} if it is continuous in the $p$-adic topologies
and there is a (necessarily unique)  $\d^{\text{alg}}$-function $g$ making the following diagram commute:
\begin{center}
\begin{tikzpicture}

\node (LT) at (-1,0) {$ V(R^{\text{alg}}) $};
\node (RT) at (2,0){$ K^{\text{alg}} $};
\node (LB) at (-1,-1){$ V(\OCp) $};
\node (RB) at (2,-1){$ {\mathbb C}_p $};

\draw[->] (LT) edge node[above]{$g$}(RT);
\draw[->] (LT) edge  (LB);
\draw[->] (RT) edge (RB);
\draw[->] (LB) edge node[above]{$h$} (RB);

\end{tikzpicture}
\end{center}

\end{definition}
 
It is not clear at this point if any $\d^{\text{alg}}$-function $V(R^{\text{alg}})\ra R^{\text{alg}}$
or even any tempered such function
can be extended to a continuous function and hence to a $\d^{{\mathbb C}_p}$-function  $V(\OCp)\ra {\mathbb C}_p$. 
We will prove that such an extension is possible in a series of important cases.

 \subsection{Aim and structure of the paper}
  The purpose of the present paper is to show that the ``main" $\d$-functions appearing in the theory of \cite{char, difmod, book} are totally $\d$-overconvergent. This improves upon the results in \cite[Thm. 1.2]{over} which showed that these $\d$-functions were $\d_{\pi}$-overconvergent for $\pi$ of ramification index at most $p-1$. This allows one to consider the functions $f^{\text{alg}}$ in \ref{finfty} and, in particular, it allows one to consider sets $(f^{\text{alg}})^{-1}(0)$ of solutions in $K^{\text{alg}}$ to the ``arithmetic differential equations" $f$. 
  We summarize some of the main results of the paper in the  Theorems \ref{maintheorem1},
  \ref{maintheorem2}, \ref{maintheorem3}, \ref{maintheorem4}
   below. The objects 
 involved in the statement of this theorem
    were introduced in \cite{Barcau, siegel, char, difmod,  book} and will be reviewed in the body of the paper. They are the main players in the theory and applications of arithmetic differential equations \cite{book}.
  
  \begin{theorem}\label{maintheorem1} Let $A$ be an abelian scheme over $R$ and let 
  $\psi$ be a $\d$-character of $A$; cf.  Definition \ref{charrrr}. Then $\psi$ is a tempered totally $\d$-overconvergent $\d$-function on $A$.\end{theorem}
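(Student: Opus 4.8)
The plan is to analyze the structure of $\d$-characters of abelian schemes as established in \cite{char, Barcau} and show that the $p$-adic series defining $\psi$ continues to converge after base change to any $R_\pi$, provided one divides by a controlled power of $p$ whose exponent grows at most logarithmically in $e(\pi)$. First I would recall that a $\d$-character $\psi$ of an abelian scheme $A/R$ of relative dimension $g$ is, up to $R$-linear combinations and composition with the $p$-adic logarithm, built from the ``canonical'' $\d$-characters constructed in \cite{char}; in the one-dimensional (elliptic) case $\psi$ factors through a specific order-$2$ $\d$-character, and in general one reduces to analyzing the universal such object on the formal group $\widehat{A}$. The key reduction is that, since $\d$-characters are group homomorphisms, their total $\d$-overconvergence is a local question near the origin: one works on the formal group, where $\psi$ is given by a power series in the coordinates and their $\d$-iterates, and one must track how the $p$-adic valuations of the coefficients interact with the ``denominators'' that appear when $\d$ is replaced by $\d_\pi$ and $\pi$-adic valuations are converted to $p$-adic ones.

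Second, I would make precise the comparison between $\d = \d_p$ acting on $R$ and $\d_\pi$ acting on $R_\pi$. For $a \in R \subset R_\pi$ one has $\d_\pi a = (\phi(a) - a^p)/\pi = (p/\pi)\,\d a$, and $v_p(p/\pi) = 1 - 1/e(\pi)$, so a single $\d$ costs a bounded amount; the real issue is iteration, because $\d_\pi^r$ applied to elements of $R_\pi$ (not of $R$) genuinely introduces negative powers of $p$. The crucial estimate — this is the technical heart — is a bound of the form: if $F \in R[x,x',\dots,x^{(r)}]^{\widehat{\ }}$ represents a $\d$-function $f$ of order $r$, then there is an integer $\nu = \nu(r, e(\pi))$, growing like $O_r(\log e(\pi))$, such that $p^\nu f$ extends to a $\d_\pi$-function on $V(R_\pi)$. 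For $\d$-characters the order $r$ is fixed (bounded by $g$ or $2g$ depending on normalization, via \cite{char}), so once one has such an estimate with the logarithmic dependence on $e(\pi)$, temperedness follows immediately from the definition with $\kappa_1$ absorbing the implied constant and $r$. I expect the cleanest route is to first treat the group law: the addition formula on the formal group is given by power series with $R$-coefficients, and applying $\d_\pi$ repeatedly to such a formula and solving for the higher $\d_\pi$-iterates of one coordinate in terms of the others is where denominators are generated; a careful induction on $r$ controlling the valuations of the coefficients in this ``$\delta$-expansion'' of the formal group law should give the $O(\log e(\pi))$ bound, essentially because at each stage one divides by something of the shape $\pi^j$ whose $p$-valuation is $j/e(\pi)$, and the total $j$ grows polynomially in $r$ but the conversion factor $1/e(\pi)$ tames the $e(\pi)$-dependence.

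Third, I would assemble the pieces: reduce an arbitrary $\d$-character to the canonical ones of \cite{char} using that $\cO^r(A)$ is generated appropriately (and that the $\d$-character group is finitely generated, so it suffices to handle a basis, using that sums and $R$-multiples of tempered totally $\d$-overconvergent functions are again such, by the remark following the definition of ``tempered''); then invoke the explicit description of the canonical $\d$-character in terms of the logarithm of the formal group and the universal vectorial extension, on which the overconvergence estimate above applies. One should also use Remark \ref{nicelunch}(4) — or rather its input — only at the very end to note that $\psi^{\text{alg}}$ is again a homomorphism; for the statement at hand we only need total $\d$-overconvergence plus the logarithmic polar-order bound, i.e. temperedness.

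The main obstacle, I expect, is precisely establishing the logarithmic bound on the polar order in terms of $e(\pi)$: the results of \cite{over} already give $\d_\pi$-overconvergence for $e(\pi) \le p-1$, but extracting a bound that is \emph{uniform and logarithmic} as $e(\pi) \to \infty$ requires a genuinely quantitative analysis of how denominators propagate through the iterated $\pi$-derivatives of the formal group law — in effect, an explicit Newton-polygon-type estimate for the coefficients of the $\d$-expansion of $\widehat{A}$. Everything else (reduction to the formal group, reduction to a finite basis of $\d$-characters, the passage from $\pi$-adic to $p$-adic valuations) is formal once that estimate is in hand.
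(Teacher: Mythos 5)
Your outline correctly identifies the two main ingredients — that the technical heart is a logarithmic bound in $e(\pi)$ on the denominators coming from the formal-group logarithm, and that one then assembles the result using finite generation of the $\d$-character module — but two steps that you treat as ``formal'' are where the paper does genuine work, and your proposed substitutes would not obviously succeed.

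First, the reduction to ``a local question near the origin'' is not justified. Total $\d$-overconvergence requires producing a $\d_\pi$-function on all of $A(R_\pi)$, and knowing that $\psi$ is a homomorphism does not automatically let you extend a local $\d_\pi$-function from a ball around the origin to a global one that is still a $\d_\pi$-function (as opposed to a bare set-theoretic homomorphism). The paper's route is different and more careful: rather than restricting to the formal group of $A$, it restricts $\psi$ to the kernel $N^r = \ker(J^r(A)\to\widehat{A})$, shows (after multiplying by a uniform $p^\mu$, $\mu$ independent of $\pi$) that this restriction is an $R$-linear combination $L$ of the logarithmic series $L^s_k$, and then reconstructs $p^\mu\psi$ globally from $L$ by a \v{C}ech cocycle argument: the obstruction $\partial L \in H^1(A,\cO)$ vanishes, one chooses local primitives $\Gamma_i$ with $L\circ(s_i - s_j) = \Gamma_i - \Gamma_j$, and these glue to a homomorphism $\psi'$ with $\text{res}(\psi') = L = \text{res}(p^\mu\psi)$, whence $\psi' = p^\mu\psi$ by injectivity of restriction. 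This gluing step is precisely what your ``local near the origin'' reduction sweeps under the rug, and it is where the sections $s_i$, $s_{i,\pi}$ of the jet projections — and their $\pi$-compatibility — actually enter.

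Second, your proposed method of establishing the quantitative bound, namely ``a careful induction on $r$ controlling the valuations of the coefficients in the $\d$-expansion of the formal group law,'' is not how the estimate is obtained, and on its own it lacks the decisive input. The paper does not iterate $\d_\pi$ through the group law at all. Instead it writes $L^r_k$ explicitly as $\tfrac{1}{p}\{\phi^r(l_k(\mathbf{T}))\}_{|\mathbf{T}=0}$ where $l_k$ is the formal logarithm, and then the bound rests on two inputs: (a) Hazewinkel's integrality $|\alpha|\, A_{\alpha k}\in S$ for the coefficients of the logarithm, which supplies exactly the ``$n$ in the denominator'' structure, and (b) the elementary estimate $v_p(\pi^n/n) \ge -\log e/\log p$ with $v_p(\pi^n/n)\to\infty$. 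Your induction on the group law would still, at some point, need to prove a statement equivalent to (a); absent that, the claimed $O(\log e(\pi))$ bound is asserted rather than derived. Once these two points are repaired — replace the ``local near origin'' reduction by the restriction-to-$N^r$ plus cocycle reconstruction, and replace the group-law induction by the Hazewinkel integrality of the logarithm — the rest of your plan (uniform $p^\mu$, finite generation, stability of temperedness under sums and $R$-multiples) lines up with the paper's argument.
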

  
  \begin{theorem}\label{maintheorem2} Let $X$ be a smooth fine moduli space  of principally polarized abelian schemes of dimension $g$ with level structure, $A\ra X$ be the universal abelian scheme, ${\mathcal E}$ be the direct image on $X$ of the relative cotangent sheaf $\Omega_{A/X}$, and let 
  $B\ra X$ be the principal bundle associated to  ${\mathcal E}$.  Let $f$
  be an  isogeny covariant Siegel $\d$-modular form of size $g$ and some basic weight; cf.  section \ref{cinciunu}. Then $f$  defines a tempered totally $\d$-overconvergent $\d$-function on  $B$. \end{theorem}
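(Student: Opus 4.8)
\textbf{Proof proposal for Theorem \ref{maintheorem2}.}

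The plan is to reduce the statement about Siegel $\d$-modular forms to the already-established Theorem \ref{maintheorem1} about $\d$-characters of abelian schemes, using the explicit construction of isogeny covariant $\d$-modular forms out of $\d$-characters (the ``$f_{\text{jet}}$'' / $f^1$ type constructions going back to \cite{Barcau, difmod, book}). First I would recall that an isogeny covariant Siegel $\d$-modular form of weight $w$ on $B$ is, by construction, obtained from $\d$-characters of the universal abelian scheme $A\to X$ (or of suitable semi-abelian degenerations/canonical lifts), together with morphisms of $R$-schemes: one pulls back the $\d$-character along sections, trivializes using the principal bundle coordinates coming from $\mathcal E$, and combines the resulting functions by ring operations and $\phi$. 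Concretely, I would exhibit $f$, as a $\d$-function on $B$, as a polynomial-with-$R$-coefficients expression in (i) pullbacks of the coordinate $\d$-characters $\psi$ of $A$ along the tautological family of points, (ii) the affine coordinates on $B$ and on $X$ coming from the chosen embeddings, and (iii) their iterated $\phi$-images; all of these are either $\d$-characters (hence tempered totally $\d$-overconvergent by Theorem \ref{maintheorem1}) or functions induced by morphisms of $R$-schemes (hence totally $\d$-overconvergent with polar order bounded by $\lambda(x)=0$, as noted in the remark after the definition of tempered functions).

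Given such a description, the conclusion follows formally from the closure properties recorded earlier: sums and products of tempered totally $\d$-overconvergent functions are again tempered totally $\d$-overconvergent (with polar orders adding or maxing, both of which preserve the $\kappa_1\log x+\kappa_2$ shape); composition with morphisms of $R$-schemes preserves total $\d$-overconvergence and the polar-order bound; applying $\phi$ (equivalently $\d$, since $\phi x = x^p+p\,\d x$) raises the polar-order bound by a constant; and $R$-linear combinations are harmless. Thus the key step is purely bookkeeping once the structural formula for $f$ is in hand: assemble the finitely many building blocks, track their individual $\lambda$'s, and observe that the resulting $\lambda$ is still of the form $\kappa_1\log x+\kappa_2$.

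The main obstacle, and the part that requires genuine care rather than formal nonsense, is step one: producing the explicit presentation of an \emph{arbitrary} isogeny covariant Siegel $\d$-modular form of basic weight purely in terms of $\d$-characters of $A$ and algebraic data. For $g=1$ this is classical (the forms $f^1$, $f^{\partial}$ and their analogues), but for general $g$ one must invoke the structure theory from \cite{difmod, book} describing the ring of isogeny covariant $\d$-modular forms and its generators, and check that every generator of basic weight arises, via the $\d$-character of $A$ together with the identification of $\mathcal E$ with the direct image of $\Omega_{A/X}$, as a function of the type above on the principal bundle $B$. A secondary subtlety is the passage between $B(R)$ and $A(R)$: one needs the universal family over $X$ to produce enough $R$-points (or enough of the jet-space formalism on $B$) so that the $\d$-character $\psi$ can be pulled back; I would handle this by working locally on $X$ over affine opens, where $B$ trivializes and the universal abelian scheme admits the required sections, then glue using that total $\d$-overconvergence is a sheaf-theoretic (local) condition. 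Once the presentation is secured, temperedness is immediate because only finitely many $\d$-characters and finitely many $\phi$-iterates are involved, so the constants $\kappa_1,\kappa_2$ are obtained by taking maxima over a finite set.
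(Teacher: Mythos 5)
Your plan to deduce Theorem \ref{maintheorem2} from Theorem \ref{maintheorem1} does not match the paper's argument, and more importantly it rests on a claim about the construction of isogeny covariant Siegel $\d$-modular forms that is not correct. The form $f^r$ that generates $I^r_g(\phi^r,\phi^0)$ is \emph{not} a pullback or algebraic combination of $\d$-characters of the universal abelian scheme $A\ra X$ along sections. It is precisely the obstruction, measured in $H^1(A\otimes_{S^0}S^r,\cO)$ by the \v{C}ech cocycle $\varphi^r_{ij}=L^r\circ(s_i-s_j)$, to extending the logarithmic homomorphism $L^r:N^r\ra\widehat{{\mathbb G}^g_a}$ (defined on the kernel $N^r$ of $J^r(A/S^*)\ra\widehat{A}\otimes S^r$) to a global homomorphism out of $J^r(A/S^*)$ — that is, to the existence of a relative $\d$-character of the family. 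If such a global $\d$-character existed, $f^r$ would vanish. So there is no formula expressing $f^r$ as a polynomial in pullbacks of $\d$-characters plus algebraic functions and $\phi$-iterates; your ``step one'' cannot be carried out as stated, even locally after trivializing $B$.

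The paper's actual route is: (i) invoke the rank-one result (Theorem \ref{oneone}) to reduce to the single generator $f^r$ of $I^r_g(\phi^r,\phi^0)$ and its $\phi$-twists and transposes; (ii) for $f^r$, run through the explicit \v{C}ech construction and show that after multiplying by $p^\nu$ with $\nu=[\log e/\log p]+2$ the cocycle $\varphi^r_{ij}$ descends to $\cO(J^r_\pi)$; the only genuine denominators come from the coefficients of the logarithm $L^r$ of the formal group, and these are controlled by Proposition \ref{Lrp} via the elementary estimate $v_p(\pi^n/n)\geq-\log e/\log p$. What the two theorems do share is exactly this technical input (Proposition \ref{Lrp} also drives the proof of Theorem \ref{overpsi}), but the reduction you propose goes in the wrong direction: neither result is a formal consequence of the other, they are siblings resting on the same denominator estimate. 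Your closure-property bookkeeping (sums, products, $\phi$, morphisms of schemes) is fine as far as it goes, but it never engages the place where the work is, namely showing the generator $f^r$ itself is overconvergent, and that step requires the \v{C}ech-cocycle analysis you have replaced with a nonexistent reduction.

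Two smaller points. First, the paper actually proves the modular-forms result (Section 5) \emph{before} the $\d$-characters result (Section 6), so there is no ``already-established'' Theorem \ref{maintheorem1} to lean on at that stage of the exposition; this is cosmetic, but it signals that the logical dependence you posit is not there. Second, the local $\d$-Serre-Tate description in Remark \ref{furtherunderstand} does relate $(f^r)^{\text{alg}}$ to the basic $\d$-character of ${\mathbb G}_m$ (not of $A$), but only formally, ball by ball around ordinary points; it cannot serve as a global structural formula, and in particular it says nothing at the supersingular locus, which $X$ may contain.
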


   \begin{theorem}\label{maintheorem3}  In the situation of Theorem \ref{maintheorem2} assume  $g=1$ and $X$ is the modular curve $X_1(N)$ minus the cusps. Let $f$ be a  normalized newform
 of weight $2$ over ${\mathbb Q}$ and let $f^{\sharp}$ be the attached
 $\d$-modular form of weight $0$; cf.  Section \ref{sasedoi}. Then $f^{\sharp}$ defines a tempered totally $\d$-overconvergent $\d$-function on $X$.\end{theorem}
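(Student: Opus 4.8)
\textbf{Proof proposal for Theorem \ref{maintheorem3}.}

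The plan is to reduce Theorem \ref{maintheorem3} to Theorem \ref{maintheorem2} together with Theorem \ref{maintheorem1}, using the explicit construction of $f^{\sharp}$ from the newform $f$. Recall that $f^{\sharp}$ is built in two stages: one first produces, from the normalized newform $f$ of weight $2$ on $X_1(N)$, a $\d$-modular form $f^{\partial}$ of weight (typically) $\chi\cdot\mathrm{something}$ via the Fourier expansion / $q$-expansion principle and the Hecke theory — this is the isogeny covariant weight-$2$ Siegel-type $\d$-modular form in the $g=1$ case; then one divides by a suitable power of the standard $\d$-modular form $f_{\mathrm{jet}}$ (or $f_1$, the basic weight-$1$ form coming from the canonical lift / Kodaira--Spencer comparison) in order to pass to weight $0$ and thereby descend from the bundle $B$ to the base $X$. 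So the first step is to write $f^{\sharp}$ as a ratio $N/D$ where $N$ is a tempered totally $\d$-overconvergent $\d$-function on $B$ (supplied by Theorem \ref{maintheorem2}, since $N$ is an isogeny covariant Siegel $\d$-modular form of size $1$ and basic weight) and $D$ is a power of the basic form $f_1$, likewise tempered totally $\d$-overconvergent by Theorem \ref{maintheorem2}.

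Second, I would control the denominator. The point of the weight-$0$ normalization is that $N/D$ is a genuine function on $X(R)$ (not merely on $B(R)$): it is invariant under the $\mathbb{G}_m$-action on $B$ because numerator and denominator transform by the same character. Invoking Remark \ref{nicelunch}, parts 1, 2, 5 and 6, the associated $\d^{\mathrm{alg}}$-functions $N^{\mathrm{alg}}$ and $D^{\mathrm{alg}}$ still transform by the same character under the $\mathbb{G}_m$-action on $B(R^{\mathrm{alg}})$, so the ratio — where defined — descends to a function on $X(R^{\mathrm{alg}})$. The issue is that $D=f_1^{m}$ may vanish (at supersingular points in the reduction, the Kodaira--Spencer / canonical-lift obstruction is nonzero mod $p$), so $N/D$ is a priori only defined on the ordinary locus, i.e. after inverting $f_1$; thus $f^{\sharp}$ is a $\d$-function on the open subscheme $X^{\mathrm{ord}}=X\setminus\{f_1=0\}$ rather than on all of $X$. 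One then uses the fact (which is part of the construction of $f^{\sharp}$ in the source, via the Eichler--Shimura / Hecke relation satisfied by $f^{\partial}$) that $f^{\sharp}$ actually extends to a $\d$-function on all of $X$ — the apparent poles along $\{f_1=0\}$ are cancelled by vanishing of the numerator to sufficient order. This extension statement is where I would lean most heavily on the cited material in Section \ref{sasedoi}; once granted, total $\d$-overconvergence of the extension follows because it agrees on the dense open $X^{\mathrm{ord}}$ with the totally $\d$-overconvergent ratio, and total $\d$-overconvergence is a sheaf-local condition that can be checked on the dense open (using the uniqueness in Remark \ref{bubule} to glue the $f_{\pi,\nu}$'s).

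Third, for the temperedness bound I would track polar orders through the two operations. By the Remark following the definition of tempered, if $N$ has polar order bounded by $\kappa_1^N\log x+\kappa_2^N$ and $D=f_1^m$ has polar order bounded by $\kappa_1^D\log x+\kappa_2^D$, then on the locus where $|D|_p$ is bounded below the quotient $N/D$ has polar order bounded by a function of the form $\kappa_1\log x+\kappa_2$ plus the contribution of $|D^{\mathrm{alg}}(P)|_p^{-1}$; the latter is itself a $\d^{\mathrm{alg}}$-function value, hence tempered, so its inverse is bounded below by $c\cdot e(P)^{-c'}$ away from the zero locus, giving an overall tempered bound for $f^{\sharp,\mathrm{alg}}$ on $X^{\mathrm{ord}}(R^{\mathrm{alg}})$; the extension across $\{f_1=0\}$ is handled by the same order-of-vanishing cancellation, so no new denominators appear there and the tempered bound persists on all of $X(R^{\mathrm{alg}})$.

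\textbf{Main obstacle.} The hard part will not be the formal bookkeeping of polar orders but the input from the structure theory: namely, establishing (or precisely citing) that $N$ — the weight-$2$ isogeny covariant $\d$-modular form built from the newform $f$ — is exactly of the ``basic weight'' to which Theorem \ref{maintheorem2} applies, and that the quotient by $f_1^{m}$ genuinely extends holomorphically (as a $\d$-function) across the supersingular locus. Everything else is an application of Theorem \ref{maintheorem2} and the closure properties in the Remark after the definition of tempered.
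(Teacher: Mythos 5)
Your proposal is not how the paper proves this, and the route you take has a foundational misunderstanding about what $f^{\sharp}$ is that makes the argument unrecoverable.

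In the paper (Section \ref{sasedoi}), $f^{\sharp}$ is constructed as follows: one takes the modular parametrization $\Phi \colon X_1(N)_R \ra A_R$ of the elliptic curve $A_R$ attached to the newform $f$, takes the basic $\d$-character $\psi \in \cO^r(A_R)$ of $A_R$, and sets $f^{\sharp} := \Phi^*\psi \in \cO^r(X)$. In other words $f^{\sharp}$ is the pullback of a $\d$-\emph{character} of an abelian scheme along a morphism of $R$-schemes. The paper's proof of Corollary \ref{onherside} (and hence Theorem \ref{maintheorem3}) is then essentially a one-liner: by Theorem \ref{overpsi} (= Theorem \ref{maintheorem1}), $\psi$ is tempered and totally $\d$-overconvergent with polar order bounded by $\frac{\log x}{\log p}+2$; and by the closure property recorded in the Remark after the definition of temperedness, composing a totally $\d$-overconvergent $\d$-function with a map induced by a morphism of $R$-schemes preserves total $\d$-overconvergence and the same polar-order bound. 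So Theorem \ref{maintheorem3} reduces to Theorem \ref{maintheorem1}, not to Theorem \ref{maintheorem2}.

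Your proposal instead posits that $f^{\sharp} = N/D$ where $N$ is some isogeny covariant $\d$-modular form built from $f$ and $D = f_1^m$. No such decomposition is part of the definition of $f^{\sharp}$, and you appear to be conflating $f^{\sharp}$ with the unrelated objects $f^{\partial}, f_{\partial}$ of Section \ref{cincipatru} (which are built from $f^1$ via the Serre operator and live only on the ordinary locus, and have nothing to do with a newform $f$). Even setting aside the misidentification, several steps in your argument would not go through: (i) the class of totally $\d$-overconvergent $\d$-functions is a sub\emph{ring}, not a field, so quotients by $f_1^m$ are not automatically totally $\d$-overconvergent; (ii) the claim that total $\d$-overconvergence can be ``checked on a dense open'' is false --- the presheaves $\cO_V^{r\dagger}$ are subsheaves, but the sheaf condition requires a Zariski \emph{cover}, not density, and the strict inclusion \ref{notaniso} shows that a global section of $\cO^r_V$ restricting into $\cO^{r\dagger}_U$ on an open $U$ need not itself be totally $\d$-overconvergent; (iii) the ``extension across the supersingular locus with cancellation of poles'' step is asserted rather than argued, and nothing in Section \ref{sasedoi} supports it because that is not how $f^{\sharp}$ arises. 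The correct and much shorter route is the pullback-of-a-$\d$-character argument described above.
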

  
    \begin{theorem}\label{maintheorem4} In the situation of Theorem \ref{maintheorem2} assume  $g=1$, $X$ is the modular curve $X_1(N)$ minus the cusps,
   and consider the open set  $B_{\text{ord}}$ of $B$ lying over   the ordinary locus $X_{\text{ord}}$ of $X$. Let $f$ be an isogeny covariant $\d$-modular form on $X_{\text{ord}}$; cf.  Section \ref{cincipatru}. Then $f$  is a tempered totally $\d$-overconvergent $\d$-function 
  on  $B_{\text{ord}}$.
  \end{theorem}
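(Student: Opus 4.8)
The plan is to reduce Theorem \ref{maintheorem4} to the ordinary-locus analogue of the estimates underlying Theorems \ref{maintheorem1}--\ref{maintheorem3}, exploiting the explicit description of isogeny covariant $\d$-modular forms on $X_{\text{ord}}$ in terms of the Serre--Tate parameter. First I would recall from \cite{difmod, book} that over the ordinary locus the $p$-divisible group of the universal elliptic curve is (formally locally) a product of a connected and an \'etale part, so that $B_{\text{ord}}$ is, locally on $X_{\text{ord}}$, isomorphic to the product of the Serre--Tate deformation space $\widehat{\mathbb{G}}_m$ (with canonical parameter $q$, or rather $t = q-1$) and the ``Frobenius frame'' coordinate on the principal bundle. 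The key classical fact is that an isogeny covariant $\d$-modular form $f$ on $X_{\text{ord}}$ is, up to a unit and an integer power of the basic $\d$-modular form $f^1$ (the one with $\d$-Fourier expansion essentially $\sum_{n\ge 1}\frac{(-1)^{n-1}p^{n-1}}{n}(\d t/t)^n$-type series, i.e. built from the ``$\psi$'' of the canonical lift), a polynomial with $R$-coefficients in $f^1$ and its $\phi$-conjugates; this is precisely the structure theorem for isogeny covariant forms on the ordinary locus. So the whole statement collapses to proving the result for the single generator $f^1$.

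The core estimate is then the overconvergence of $f^1$, which is governed by a $p$-adic logarithm. Concretely, $f^1$ is obtained by applying the canonical $\d$-character of the formal group $\widehat{\mathbb{G}}_m$ to the Serre--Tate parameter; this $\d$-character has the shape $\psi^{\text{can}}(t) = \frac{1}{p}\big(\phi(\log(1+t)) - p\log(1+t)\big)$ (up to normalization), i.e. a rescaled $p$-adic logarithm. The plan is to repeat the computation one uses for Theorem \ref{maintheorem1} in the toy case of $\widehat{\mathbb{G}}_m$: for $\pi\in\Pi$ with ramification index $e = e(\pi)$, one substitutes $\d = \d_p$ by $\d_\pi$, which amounts to multiplying the relevant series by powers of $\pi/p$, i.e. introducing a factor $p^{-1/e}$ per derivative; since $\log(1+t) = \sum_{n\ge 1}(-1)^{n-1}t^n/n$ has the denominator $n$ whose $p$-adic valuation is $\le \log_p n$, the $\d_\pi$-expansion of $f^1$ converges after clearing a denominator $p^\nu$ with $\nu$ of size $O(\log e)$. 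This is exactly the tempered bound. I would package this via the identity (analogue of \ref{ident1}) expressing $\d_\pi^r$ in terms of $\d_p$ on the larger ring together with the divisibility $p^{e}\mid \pi^{?}$-type relations, precisely as in \cite{over} but now without truncating at $e\le p-1$: the point flagged in the introduction is that the denominators one meets in the logarithm series grow only logarithmically, so the ``bounded ramification'' hypothesis in \cite{over} was not actually needed for these series.

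The main obstacle I anticipate is controlling the interaction between the \emph{two} coordinate directions on $B_{\text{ord}}$ --- the Serre--Tate direction and the bundle direction --- when one passes to $R_\pi$, and in particular verifying that the overconvergent representatives $f^1_{\pi,\nu}$ are compatible under $\pi \mid \pi_0$ (Diagram \ref{pantof2222}), so that they glue to a genuine $\d^{\text{alg}}$-function rather than merely existing separately for each $\pi$. This compatibility and the uniqueness statements of Remark \ref{bibi} will need the principal-bundle structure to be respected by the rescaled $p$-derivations; I would handle it by working with a fixed affine chart of $B_{\text{ord}}$ on which the universal elliptic curve has a chosen invariant differential, writing everything in the single $\d$-Fourier coordinate, and invoking the $q$-expansion (Serre--Tate expansion) injectivity principle from \cite{difmod, book} to transfer identities from $R$ to $R_\pi$. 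A secondary, more bookkeeping-type difficulty is matching normalizations of ``basic weight'' so that $f$, and not just $f/f^1$, lands in $R_\pi$ rather than $K_\pi$ with an uncontrolled pole; this is where the ``basic weight'' hypothesis is used, and where the tempered constants $\kappa_1,\kappa_2$ get their explicit (logarithmic) form.
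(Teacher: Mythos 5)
Your reduction step is the problem. You claim the structure theorem collapses everything to the single generator $f^1$ and its $\phi$-conjugates, but that is not what the structure theorem on $X_{\text{ord}}$ actually says. The correct statement (Theorem \ref{starrr}) is that $I^r_{X_{\text{ord}}}(w)\otimes_R K$ is spanned by products $(f^1)^v(f^{\partial})^{v'}$ with $v,v'\in\bZ[\phi]$, $v\geq 0$, where $f^{\partial}$ is the invertible weight $\phi-1$ isogeny covariant form of order $1$ (Barcau's form), and $v'$ is allowed to have negative coefficients, so $f_{\partial}=(f^{\partial})^{-1}$ also enters. Thus the theorem reduces to proving total $\d$-overconvergence (with logarithmic polar order) of \emph{two} generators, $f^1$ \emph{and} $f^{\partial}$, not one. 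Your plan only addresses $f^1$ via the logarithm estimate, and the word ``unit'' in your sketch silently absorbs $f^{\partial}$ as if it were free. It is not free: in the paper the overconvergence of $f^{\partial}$ is a separate, nontrivial corollary, proved via the formula $f^{\partial}=\partial_1 f^1 - pP^{\phi}f^1$ involving the Katz/Gauss--Manin operator $\partial$, its conjugates $\partial_0,\partial_1$, and the Ramanujan form $P$, together with the fact that $\partial_0,\partial_1$ preserve polar order. None of this appears in your argument, and the Serre--Tate expansion of $f^{\partial}$ being essentially constant does not by itself imply overconvergence --- Serre--Tate expansion injectivity gives \emph{uniqueness} of the putative $\d_\pi$-representative $f_{\pi,\nu}$, not its \emph{existence}.

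That second point is also a weakness of your treatment of $f^1$ itself. You propose to ``transfer identities from $R$ to $R_\pi$ via the $q$-expansion injectivity principle,'' but the substance of overconvergence is constructing a global $\d_\pi$-function $f_{\pi,\nu}\in\cO(J^r_\pi(B_{\text{ord},R_\pi}))$ (equivalently showing $p^\nu f^1\otimes 1$ lies in the image of $\cO(J^r_\pi)\to\cO(J^r)\otimes R_\pi$), not merely an identity of formal expansions in a ball. The paper does this by revisiting the Cech-cocycle construction of $f^r$ (sections $s_i$ of $J^r(U_i/S^*)\to\widehat U_i\otimes S^r$, the formal-group logarithm $L^r$, and the key estimate $v_p(\pi^n/n)\geq -\log e/\log p$) and propagating it compatibly along $\pi\mid\pi_0$. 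Your ``introduce a factor $p^{-1/e}$ per derivative'' heuristic captures the arithmetic of the estimate but not the construction of the global overconvergent representative; and the bound you get, $O(\log e)$, only matters once you know the representative exists on all of $B_{\text{ord}}$.

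To repair the proposal: (i) replace the single-generator reduction by the correct two-generator statement and add a separate argument for $f^{\partial}$, e.g.\ reusing the overconvergence of $f^1$ together with the formula $f^{\partial}=\partial_1 f^1-pP^{\phi}f^1$ and the stability of polar order under $\partial_0,\partial_1$; (ii) upgrade the local Serre--Tate estimate on $f^1$ to a global construction, via the Cech-cocycle/logarithm argument, before invoking uniqueness to glue across $\pi\mid\pi_0$.
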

  
  Theorems \ref{maintheorem1},
  \ref{maintheorem2}, \ref{maintheorem3}, \ref{maintheorem4} are special cases of
  Theorem \ref{overpsi}, Corollary \ref{allrightthen}, Corollary \ref{onherside}, and Corollary \ref{hereyes}, respectively. 
  
  Now the theorems above allow one to consider the corresponding 
  $\d^{\text{alg}}$-functions attached to our $\d$-functions. We would like to understand the sets of zeros of these  $\d^{\text{alg}}$-functions and the extendability of these functions to $\d^{{\mathbb C}_p}$-functions. 
  
 \begin{remark}
  If $\psi:A(R)\ra R$ is as in  Theorem \ref{maintheorem1}
 then, since $\psi^{\text{alg}}:A(R^{\text{alg}})\ra K^{\text{alg}}$ is a homomorphism we get that $\text{Ker}\ \psi^{\text{alg}}$ contains the division hull in $A(R^{\text{alg}})$ of the group $\bigcap_{n\geq 1} p^nA(R)$;  cf.  Corollary \ref{slsl}
 for more on $\text{Ker}\ \psi^{\text{alg}}$. For  the analogous kernel in the case of ${\mathbb G}_m$  see Proposition
\ref{zagyuck}.\end{remark}

On the other hand, for the zero sets of $\d$-modular functions we have the following. 
 
 \begin{theorem}
  If $f$ is any non-zero function as Theorem \ref{maintheorem2}, with $g=1$,  then  the set of ordinary points mapped to $0$ by $f^{\text{alg}}:B(R^{\text{alg}})\ra K^{\text{alg}}$ contains the ordinary points whose Serre-Tate parameters are roots of unity.
  Conversely, if an ordinary point $P\in B_{\text{ord}}(R^{\text{alg}})$ is such that $f^{\text{alg}}(P)=0$ and if the Serre-Tate parameter of $P$ is algebraic over ${\mathbb Q}_p$ then that Serre-Tate parameter  is a root of unity.
  \end{theorem}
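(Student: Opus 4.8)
The plan is to identify the $\d^{\text{alg}}$-function $f^{\text{alg}}$ restricted to the ordinary locus with the $\d^{\text{alg}}$-function attached to the Serre-Tate expansion of $f$, and then to read off zeros from that expansion. Recall that an isogeny covariant $\d$-modular form $f$ of weight $w$ on $B_{\text{ord}}$ has, after trivializing via the canonical (Serre-Tate) coordinate $q$ around an ordinary point, a $\d$-Serre-Tate expansion which is a restricted power series in $q-1$ and in $\d q, \ldots, \d^r q$ with coefficients in $R$ (this is the content of the ``$\d$-Serre-Tate theory" developed in \cite{difmod, book}; it is essentially the fact that the canonical lift trivializes $B$ over a formal neighborhood and that isogeny covariance forces the expansion to be controlled by the logarithm of the canonical coordinate). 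The key observation is that for an ordinary point $P$ with Serre-Tate parameter $q(P)=\zeta$, the values $q(P), \d q(P), \ldots, \d^r q(P)$ all lie in $R^{\text{alg}}$ and in fact, after the rescaling that underlies total $\d$-overconvergence (Theorem \ref{maintheorem4}, i.e. Corollary \ref{hereyes}), the value $f^{\text{alg}}(P)$ is computed by substituting these quantities into the Serre-Tate expansion.

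Granting this, the first implication is the easy one. If $\zeta$ is a root of unity, then $\zeta \in {\mathbb Z}_p^{\text{ur}} \subset R$ when $\zeta$ has order prime to $p$, and in general $\zeta \in R_\pi$ for a suitable $\pi \in \Pi$; moreover $\phi(\zeta) = \zeta^p$ because $\phi$ reduces to Frobenius mod $p$ and $\zeta$ is a Teichm\"uller-type element (this uses $p \geq 5$ only insofar as it is a standing hypothesis). Hence $\d_\pi \zeta = (\phi(\zeta) - \zeta^p)/\pi = 0$, and therefore $\d_\pi^i q(P) = 0$ for all $i \geq 1$. One then checks, via the known shape of the Serre-Tate expansion of an isogeny covariant form (which vanishes when all the ``$\d$-variables" are set to zero and the coordinate is a root of unity --- concretely the expansion is built from $\psi = \d\log q$ and its $\phi$-conjugates, and $\d\log q$ vanishes at roots of unity), that $f^{\text{alg}}(P) = 0$. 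I would phrase this as: $P$ is fixed by the canonical lift structure, and $f^{\text{alg}}$ restricted to such ``CL points" is governed by a classical modular form of weight $w$ which, being of weight $\neq 0$ in general, still produces $0$ because of the normalization of isogeny covariant forms; the cleanest route is just the direct substitution $\d^i q(P) = 0$.

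For the converse, suppose $P \in B_{\text{ord}}(R^{\text{alg}})$ has $f^{\text{alg}}(P) = 0$ and $q(P) = \zeta$ is algebraic over ${\mathbb Q}_p$, say $\zeta \in R_\pi$ for some $\pi \in \Pi$. The point is now to show $\d_\pi \zeta = 0$ forces $\zeta$ to be a root of unity, and then to run the substitution in reverse. First, $f^{\text{alg}}(P) = 0$ together with the explicit Serre-Tate expansion of $f$ --- which for a \emph{nonzero} isogeny covariant form is, up to a unit, a power of $\d\log q$ times a unit restricted power series, so its vanishing at $P$ is equivalent to $(\d\log q)(P) = 0$, i.e. $\phi(\zeta)/\zeta^p \equiv 1 \pmod{\pi^{?}}$ refined to $\d_\pi \zeta = 0$ --- gives $\phi(\zeta) = \zeta^p$. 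Now iterate: $\phi^n(\zeta) = \zeta^{p^n}$ for all $n \geq 1$. Since $\zeta$ is algebraic over ${\mathbb Q}_p$ and lies in $R_\pi$ with $[K_\pi:K]$ finite, the orbit $\{\phi^n(\zeta)\}_{n \geq 0}$ is finite (the residue field extension is finite, or one can note $\phi$ has finite order on the relevant finite extension of ${\mathbb Q}_p^{\text{ur}}$), so $\zeta^{p^n} = \zeta^{p^m}$ for some $n > m$, whence $\zeta^{p^m(p^{n-m}-1)} = 1$ and $\zeta$ is a root of unity. I expect the main obstacle to be making precise the claim that $f^{\text{alg}}(P) = 0 \iff (\d\log q)(P) = 0$ for a nonzero isogeny covariant form --- i.e. that after passing to $\d^{\text{alg}}$ and restricting to the ordinary locus, the Serre-Tate expansion still faithfully detects zeros; this requires knowing that the unit part of the expansion remains a unit after the rescaling/completion of Theorem \ref{maintheorem4}, and that the ``leading term" $\d\log q$ of an isogeny covariant form is genuinely present (which is where the classification of such forms from \cite{difmod} and the non-vanishing hypothesis on $f$ enter). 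Everything else is the elementary Galois/Frobenius argument above.
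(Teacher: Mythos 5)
Your high-level strategy matches the paper's: reduce, via the $\d$-Serre-Tate expansion of $f^r$, to the statement about the basic $\d$-character $\psi$ of $\mathbb{G}_m$ evaluated at the Serre-Tate parameter $q(A_P)$ (this is exactly Proposition \ref{cora}, which rests on Remark \ref{furtherunderstand} and Proposition \ref{zagyuck}). The paper's explicit formula $(f^r)^{\text{alg}}(A_P,\omega_P)=\epsilon\, u(t(P))^{1+\phi^r}\Lambda^{r-1}\bigl(\frac{1}{p}\log(\phi(q(A_P))/q(A_P)^p)\bigr)$ together with injectivity of $\Lambda^{r-1}$ gives the equivalence $f^{\text{alg}}(P)=0 \iff \log(\phi(q)/q^p)=0$ that you flagged as the main gap; this is supplied rather than left open.

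However, both of your number-theoretic endgame steps contain a genuine error, in the same place. In the forward direction you assert $\phi(\zeta)=\zeta^p$ for any root of unity $\zeta$, hence $\d_\pi\zeta=0$. This is false when $\zeta$ has $p$-power order: if $\zeta^p=1$ and $\zeta\neq 1$, then $\zeta^p=1$ but $\phi(\zeta)$ is some primitive $p$-th root of unity, so $\phi(\zeta)\neq \zeta^p$ and $\d_\pi\zeta\neq 0$. The correct and simpler argument is that $\phi(\zeta)/\zeta^p$ is a root of unity in $1+\pi R_\pi$, i.e.\ a $p$-power root of unity, and $\log$ kills torsion; one never needs $\d_\pi\zeta=0$. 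In the converse direction you make the reverse slip: from $\log(\phi(\zeta)/\zeta^p)=0$ you ``refine'' to $\phi(\zeta)=\zeta^p$. That refinement is unjustified: $\ker(\log\colon 1+\pi R_\pi\to R_\pi)$ consists of the $p$-power roots of unity there, not just $1$. The paper's fix is to pass to $b=\zeta^{p^k}$ for a suitable $k$, which does satisfy $\phi(b)=b^p$, and only then run the Frobenius-orbit/algebraicity argument (giving $b$ a root of unity, hence $\zeta$ too). Without that intermediate step your converse is not a proof.
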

  
   See for example, Proposition \ref{cora}. As for extendability to $\d^{{\mathbb C}_p}$-functions we will prove the following. 
 
 \begin{theorem}
  Let $\psi$ be as in  Theorem \ref{maintheorem1}, let $f^{\sharp}$ be as in Theorem \ref{maintheorem3}, and let  $f$ be as in
 Theorem \ref{maintheorem4}.
  Then the functions $\psi^{\text{alg}}$, $(f^{\sharp})^{\text{alg}}$,  and $f^{\text{alg}}$ extend to $\d^{{\mathbb C}_p}$-functions
 $$\psi^{{\mathbb C}_p}:A(\OCp)\ra {\mathbb C}_p,\ \ \ 
 (f^{\sharp})^{{\mathbb C}_p}:X(\OCp)\ra {\mathbb C}_p,\ \ \ 
 f^{{\mathbb C}_p}:B_{\text{ord}}(\OCp)\ra {\mathbb C}_p,$$
 respectively.\end{theorem}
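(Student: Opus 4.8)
The plan is to deduce the statement from a single soft principle: a function on $V(R^{\text{alg}})$ which is uniformly continuous on the $R^{\text{alg}}$-points of a $p$-adic neighbourhood of each point of $V(\OCp)$ extends uniquely to a continuous function $V(\OCp)\ra {\mathbb C}_p$, and this extension is then a $\d^{{\mathbb C}_p}$-function essentially by definition. This applies because $R^{\text{alg}}$ is $p$-adically dense in $\OCp$ and the schemes $A,X,B_{\text{ord}}$ are smooth over $R$, so that $V(R^{\text{alg}})$ is dense in $V(\OCp)$; for $A$ (which is proper) one also uses $A(\OCp)=A({\mathbb C}_p)$. Hence I first reduce to showing that $\psi^{\text{alg}}$, $(f^{\sharp})^{\text{alg}}$ and $f^{\text{alg}}$ are \emph{locally uniformly continuous} on $R^{\text{alg}}$-points. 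Since the reduction maps $V(\OCp)\ra V(k)$ are continuous onto a discrete target, their fibres are clopen ``balls''; thus the problem is local on one such ball lying over a point of $V(k)$, and on each ball I intend to use the explicit local descriptions furnished by the proofs of Theorems \ref{maintheorem1}, \ref{maintheorem3} and \ref{maintheorem4}.

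For a $\d$-character $\psi$ of $A$, the reduction to the formal group is built in: $\psi^{\text{alg}}\colon A(R^{\text{alg}})\ra K^{\text{alg}}$ is a homomorphism (Remark \ref{nicelunch}(4)), and for every $\overline P\in A(k)$ one may choose, by Henselianity of $R$, an unramified lift $P_0\in A(R)$; the ball over $\overline P$ is then $P_0\oplus\widehat A(R^{\text{alg}})$, where $\widehat A$ is the formal group of $A$, and on it $\psi^{\text{alg}}$ is the constant $\psi^{\text{alg}}(P_0)$ plus the restriction $\psi^{\text{alg}}|_{\widehat A(R^{\text{alg}})}$ precomposed with the translation $Q\mapsto Q\ominus P_0$. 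It therefore suffices to treat $\psi^{\text{alg}}|_{\widehat A}$. The crucial input here, to be extracted from the proof of Theorem \ref{overpsi}, is that in the logarithmic coordinate $u=l(x)$ on $\widehat A$ --- with $l$ the formal logarithm --- this restriction becomes a restricted power series in the $\phi$-twists $u,\phi u,\dots,\phi^{r}u$ with coefficients of bounded $p$-adic absolute value: the rescaling of $\d_\pi$ has absorbed all the would-be denominators, in the spirit of the passage from \ref{ubu} to \ref{uburoi}. Such an expression extends continuously to $\widehat A(\OCp)$: the automorphism $\phi$ of $K^{\text{alg}}$ preserves $v_p$, hence extends to an isometric automorphism of ${\mathbb C}_p$; the power series $l$ converges on the open unit polydisc and has denominators growing only logarithmically in the index, so it converges uniformly, hence is uniformly continuous, on each region $\{\,x:v_p(x)\geq\epsilon\,\}$ with $\epsilon>0$, and every point of $\widehat A(\OCp)$ has a $p$-adic neighbourhood inside such a region; finally a uniformly convergent power series in these ingredients with bounded coefficients is locally uniformly continuous.

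For $(f^{\sharp})^{\text{alg}}$ on $X$ and the isogeny covariant $f$ on $B_{\text{ord}}$ the same three steps apply, the role of the formal group and its logarithm being played, at an ordinary point, by the Serre--Tate coordinate, and for $(f^{\sharp})^{\text{alg}}$ near a supersingular point by the formal group of the universal elliptic curve and its logarithm. In each case the local computations in the proofs of Theorems \ref{maintheorem3} and \ref{maintheorem4} present the $\d^{\text{alg}}$-function, after the canonical rescaling, as a restricted power series in a ``logarithmic'' local coordinate and its $\phi$-twists with coefficients of bounded size, and the continuity argument of the preceding paragraph then applies verbatim. Gluing the local extensions --- which agree on overlaps by density of $V(R^{\text{alg}})$ --- yields continuous functions $\psi^{{\mathbb C}_p}$, $(f^{\sharp})^{{\mathbb C}_p}$, $f^{{\mathbb C}_p}$; uniqueness and the commutativity of the defining square are then automatic.

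The main obstacle is the middle step: establishing, on each ball and in the correct rescaled/logarithmic coordinate, that the $\d^{\text{alg}}$-function is a convergent power series whose coefficients have bounded $p$-adic absolute value --- equivalently, that the polar orders $\nu=\nu(\pi)$, which a priori grow with $e(\pi)$, disappear from this local normal form. This is ``total differential overconvergence'' in its sharpest, local form, and it is where the structure theory of $\d$-characters of abelian formal groups, the Serre--Tate expansion of $\d$-modular forms on the ordinary locus, and the behaviour near supersingular points of the formal group of the universal elliptic curve must be used. Once this normal form is available, the passage to ${\mathbb C}_p$ is comparatively soft, resting only on $\phi$ being an isometry and on the merely logarithmic growth of the denominators in the relevant formal expansions.
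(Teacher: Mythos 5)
Your proposal follows the same route the paper takes: reduce to a single residue ball, use the explicit formal-group (resp.\ Serre--Tate) normal form of the function, and observe that the relevant denominators grow only logarithmically so the series extends uniformly continuously from $R^{\text{alg}}$-points to $\OCp$-points.  That is exactly the content of Propositions \ref{lockedin} and \ref{lockedup}.  Two small remarks. First, the paper disposes of $(f^{\sharp})^{\text{alg}}$ much more cheaply than you do: since $f^{\sharp}=\psi\circ\Phi$ with $\Phi:X\ra A$ a morphism of $R$-schemes, Corollary \ref{onherside} simply composes the already-constructed $\psi^{{\mathbb C}_p}$ with the continuous map $\Phi$ on $\OCp$-points, and this works uniformly at ordinary and supersingular points of $X$.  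Your alternative of doing a direct local analysis on $X$ near supersingular points is not wrong in spirit, but ``the formal group of the universal elliptic curve'' is not the right object --- the relevant formal group is that of the fixed target $A$ of the modular parametrization, pulled back through $\Phi$; the composition argument sidesteps this entirely.  Second, your description of the local normal form for $\psi$ (``a restricted power series in $u,\phi u,\dots,\phi^r u$ with bounded coefficients, the rescaling of $\d_{\pi}$ having absorbed the denominators'') is a bit off: what \cite[Lem.\ 2.8]{char} gives, and what the paper invokes in Proposition \ref{lockedin}, is the \emph{linear} form $\psi(P)=\Lambda(\frac{1}{p}\,l(t(P)))$ with $\Lambda\in R[\phi]$; the denominator $1/p$ and the logarithmic denominators inside $l$ are not absorbed by any rescaling --- they are precisely why $\psi$ is not a restricted power series in the na\"ive coordinate $t$, and the point is that they are mild enough (by \ref{hazehaze}) to leave $l$, and hence $\psi$, uniformly continuous on each region $\{v_p(t)\ge\epsilon\}$.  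With these adjustments your argument agrees with the paper's.
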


  See Propositions \ref{lockedup} and \ref{lockedin} and Corollary \ref{onherside}. We expect a similar result for the functions
  in Theorems \ref{maintheorem2}. 
  
  \bigskip

  The plan of the paper is as follows. In section 3 we discuss some simple examples of totally $\d$-overconvergent $\d$-functions. In  section 4 we rephrase total $\d$-overconvergence in formal scheme theoretic terms. In sections 5 and 6 we review the ``main" $\d$-functions in our theorems, cf.   \cite{siegel, char, difmod,  book}, and prove their total $\d$-overconvergence; section 5 is devoted to $\d$-modular forms; section 6 is devoted to $\d$-characters of abelian schemes.

  \section{Some simple examples}

\begin{example}
Trivial examples of  totally $\d$-overconvergent $\d$-functions are given by
$$f:{\mathbb A}^1(R)=R\ra R,\ \ \ f(a)=\d^ra,$$
which has polar order at most $r$ or, more generally, functions of the form
$$f:{\mathbb A}^1(R)=R\ra R,\ \ \ f(a)=P(a,\d a,...,\d^ra),$$
where $P$ is a polynomial with $R$-coefficients. 
For such a function $f$ the function $f^{\text{alg}}:{\mathbb A}^1(R^{\text{alg}})\ra K^{\text{alg}}$
trivially extends to a continuous function, which is therefore a $\d^{{\mathbb C}_p}$-function $f^{{\mathbb C}_p}:{\mathbb A}^1(\OCp)\ra {\mathbb C}_p$.
\end{example}

\begin{example}
Here is another trivial example of totally $\d$-overconvergent $\d$-functions. Let $\bZ[\phi]$ be the ring of polynomials with $\bZ$-coefficients  in the
indeterminate $\phi$.  For $w=\sum c_i \phi^i\in \bZ[\phi]$ set $w(p):=\sum c_ip^i$. Assume $m$ is an integer coprime to $p$ and dividing $w(p)$. Let $V={\mathbb G}_m=\text{Spec}\ R[x,x^{-1}]$ and consider the $\d$-function
$$f:V(R)=R^{\times}\ra R,\ \ \ f(a):=a^{\frac{w}{m}}:=a^{\frac{w(p)}{m}}\cdot \prod
\left(\frac{\phi^i(a)}{a^{p^i}}\right)^{\frac{c_i}{m}},$$
where if $\gamma\in \bZ_p$ 
we set
$$\left(\frac{\phi^i(a)}{a^{p^i}}\right)^{\gamma}:=\sum_{s=0}^{\infty}\left(\begin{array}{c}
\gamma\\ s\end{array}\right) \frac{p^sP_{p,i}(a,\d a,...,\d^i a)^s}{a^{p^is}},$$
where $P_{p,i}$ are the unique polynomials in $i+1$ variables with $\bZ$-coefficients satisfying
$$\phi^i(b)=b^{p^i}+pP_{p,i}(b,\d b,....,\d^i b),\ \ \ b\in R.$$
We claim that $f$ is $\d_{\pi}$-overconvergent with polar order   $0$. Indeed we trivially have
$$\left(\frac{\phi^i(a)}{a^{p^i}}\right)^{\gamma}=\sum_{s=0}^{\infty}\left(\begin{array}{c}
\gamma\\ s\end{array}\right) \frac{\pi^s P_{\pi,i}(a,\d_{\pi} a,...,\d_{\pi}^i a)^s}{a^{p^is}},$$
where $P_{\pi,i}$ are the unique polynomials in $i+1$ variables with $R_{\pi}$-coefficients satisfying
\begin{equation}
\label{universal}
\phi^i(b)=b^{p^i}+\pi P_{\pi,i}(b,\d_{\pi} b,....,\d_{\pi}^i b),\ \ \ b\in R_{\pi}.\end{equation}
In particular, $f$ is totally $\d$-overconvergent with polar order bounded by the function $\lambda(x)=0$. Moreover $f^{\text{alg}}:{\mathbb G}_m(R^{\text{alg}})\ra K^{\text{alg}}$
trivially extends to a continuous function, which is therefore a $\d^{{\mathbb C}_p}$-function
$f^{{\mathbb C}_p}:{\mathbb G}_m(\OCp)\ra {\mathbb C}_p$.

\end{example}

\begin{remark}
For the examples of totally $\d$-overconvergent $\d$-functions that will  appear later
it is useful to record the following facts:
\begin{equation}
\label{estimate1}
v_p\left(\frac{\pi^n}{n}\right)\geq -\frac{\log\ e}{\log\ p},\ \ \ e:=e(\pi);
\end{equation}
\begin{equation}
\label{estimate2}
v_p\left(\frac{\pi^n}{n}\right) \ra \infty\ \ \ \text{as}\ \ \ n\ra \infty.
\end{equation}
Indeed we have
$$
v_p\left(\frac{\pi^n}{n}\right)\geq \frac{n}{e}-\frac{\log\ n}{\log\ p}
$$
which gives \ref{estimate2}  and, since the function
$$\theta(x)=\frac{x}{e}-\frac{\log\ x}{\log\ p},\ \ \ x>0,$$
attains its minimum at 
$x_0=\frac{e}{\log\ p}$  the minimum of $\theta(x)$ equals 
$$\theta(x_0)=-\frac{\log\ e}{\log\ p}+\frac{1+\log\log \ p}{\log\ p}\geq -\frac{\log\ e}{\log\ p}$$
which gives \ref{estimate1}.
\end{remark}

\begin{example} 
Let $V={\mathbb G}_m=\text{Spec}\ R[x,x^{-1}]$ and consider the $\d$-function
\begin{equation}
\label{bazz}
\psi:{\mathbb G}_m(R)=R^{\times}\ra R,\ \ \ \psi(a):=\frac{1}{p}\log \left(\frac{\phi(a)}{a^p}\right):=\frac{1}{p}\sum_{n\geq1}(-1)^n\frac{p^n}{n}\left(\frac{\d a}{a^p}\right)^n,\end{equation}
which we refer to as the {\it basic $\d$-character} of ${\mathbb G}_m$; cf.  \cite{char}.
This is a homomorphism and its kernel  $\text{Ker}\  \psi$ is the group of roots of unity
in $R$, i.e., the group of all roots of unity of degree prime to $p$ in 
 ${\mathbb Q}_p^{\text{alg}}$.
We claim that $\psi$ is  totally $\d$-overconvergent. Indeed for $\pi\in \Pi$
we have
$$\psi(a)=\frac{1}{p}\sum_{n\geq1}(-1)^n\frac{\pi^n}{n}\left(\frac{\d_{\pi}a}{a^p}\right)^n\in K_{\pi}.
$$
By \ref{estimate1} and \ref{estimate2} $\psi$ is $\d_{\pi}$-overconvergent with polar order at most
$\frac{\log\ e}{\log\ p}+2$.
 So $\psi$ is totally $\d$-overconvergent with  polar order bounded by the function 
 $$\lambda(x):=\frac{\log\ x}{\log\ p}+2.$$
 In particular, we have an induced homomorphism
 \begin{equation}
 \label{rainu}
 \psi^{\text{alg}}:{\mathbb G}_m(R^{\text{alg}})\ra K^{\text{alg}},\end{equation}
 which trivially extends to a continuous homomorphism ergo is a $\d^{{\mathbb C}_p}$-function
 $$\psi^{{\mathbb C}_p}:{\mathbb G}_m(\OCp)\ra {\mathbb C}_p.$$
 
  \end{example}
 
 \begin{proposition} \label{zagyuck}  For the basic $\d$-character $\psi$ of ${\mathbb G}_m$
 and $P\in {\mathbb G}_m(R^{\text{alg}})=(R^{\text{alg}})^{\times}$ the following hold:.
 1) If $P$ is a root of unity then $\psi^{\text{alg}}(P)=0$.
 
 2) If  $\psi^{\text{alg}}(P)=0$ and $P\in {\mathbb G}_m({\mathbb Q}_p^{\text{alg}})$ then $P$ is a root of unity. \end{proposition}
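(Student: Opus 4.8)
The plan is to identify $\psi^{\text{alg}}$ with the $p$-adic logarithm of the ``$\phi$-deviation'' $\phi(\cdot)/(\cdot)^{p}$ and then reduce both assertions to the kernel of the $p$-adic logarithm on groups of principal units. Write $\mathcal M$ for the maximal ideal of $R^{\text{alg}}$. For $P\in(R^{\text{alg}})^{\times}$ and any $\pi\in\Pi$ with $P\in R_{\pi}^{\times}$ one has $\pi\cdot(\d_{\pi}P/P^{p})=\phi(P)/P^{p}-1$; since $\phi$ reduces to the $p$-power Frobenius modulo $\mathcal M$, the element $w:=\phi(P)/P^{p}$ lies in $1+\pi R_{\pi}\subseteq 1+\mathcal M$, and substituting $\pi$ for $p$ in the series \ref{bazz} gives
$$\psi^{\text{alg}}(P)=\frac{1}{p}\log\left(\frac{\phi(P)}{P^{p}}\right),$$
the series converging because $v_{p}(w-1)>0$ (and taking values in $K^{\text{alg}}$ since $K_{\pi}$ is complete). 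Hence $\psi^{\text{alg}}(P)=0$ if and only if $\log w=0$, so the whole proposition becomes a statement about $\ker\log$ on principal units, twisted by $\phi$.

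For part (1), if $P$ is a root of unity then $w=\phi(P)/P^{p}$ is again a root of unity, and being in $1+\mathcal M$ it has $p$-power order, say $w^{p^{k}}=1$; then $p^{k}\log w=\log(w^{p^{k}})=0$, so $\log w=0$ and $\psi^{\text{alg}}(P)=0$. (Equivalently: a prime-to-$p$ root of unity satisfies $\phi(P)=P^{p}$ outright, and the $p$-power part of $P$ is annihilated by $\log$.)

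For part (2), assume $P\in\mathbb G_{m}(\mathbb Q_{p}^{\text{alg}})$, so $P\in(\mathbb Z_{p}^{\text{alg}})^{\times}$, and $\psi^{\text{alg}}(P)=0$. First I would normalize $P$: letting $\omega$ be the Teichm\"uller lift of $P\bmod\mathcal M$ (a prime-to-$p$ root of unity, hence in $\mathbb Q_{p}^{\text{ur}}$) and replacing $P$ by $P\omega^{-1}$, I may assume $P\in1+\mathcal M$; this is harmless since $\psi^{\text{alg}}$ is a group homomorphism (Remark \ref{nicelunch}, item 4), $\psi^{\text{alg}}(\omega)=0$ by part (1), $P$ is a root of unity iff $P\omega^{-1}$ is, and $\phi(\omega)=\omega^{p}$ so $w=\phi(P)/P^{p}$ is unchanged. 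Because $\phi$ is a continuous ring homomorphism it fixes $\mathbb Q_{p}$ and hence preserves $\mathbb Q_{p}^{\text{alg}}$, so $P$, $\phi(P)$ and $w$ all lie in a finite extension $L/\mathbb Q_{p}$. The group of principal units $1+\pi_{L}\mathcal O_{L}$ is a finitely generated $\mathbb Z_{p}$-module whose torsion is $\mu_{p^{\infty}}\cap L$, and $\log$ is $\mathbb Z_{p}$-linear on it with kernel exactly this torsion (it is injective on the finite-index subgroup $1+\pi_{L}^{N}\mathcal O_{L}$ for $N\gg 0$). So $\log w=0$ forces $w=\phi(P)/P^{p}=\zeta$, a $p$-power root of unity. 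Now substitute back: from $\phi(P)=\zeta P^{p}$ and $\log\zeta=0$ we get $\log\phi(P)=p\log P$, while $\log\phi(P)=\phi(\log P)$ since $\phi$ commutes with the $\mathbb Q_{p}$-coefficient power series $\log$ (all limits taken inside the complete field $L$). Thus $\phi(\log P)=p\log P$; applying $v_{p}$ and using $v_{p}\circ\phi=v_{p}$ gives $v_{p}(\log P)=1+v_{p}(\log P)$, hence $\log P=0$, so $P$ is torsion in $1+\pi_{L}\mathcal O_{L}$, i.e.\ a root of unity; undoing the normalization, $P$ itself is a root of unity.

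The step I expect to be the main obstacle --- and the only place where the hypothesis $P\in\mathbb Q_{p}^{\text{alg}}$ is essential --- is the identification of $\ker\log$ on principal units. Over the finite extension $L$ this is elementary, as above; over $R^{\text{alg}}$ one loses finite generation of the unit group and would instead need the (true but more delicate) fact that $\ker\log$ on the principal units of $\OCp$ equals $\mu_{p^{\infty}}$, together with an extension of $\phi$ to a continuous isometry of $\mathbb C_{p}$, so restricting to algebraic $P$ is the efficient route. The remaining point requiring care is the commutation $\phi\circ\log=\log\circ\phi$ and the resulting valuation identity $\phi(\log P)=p\log P\Rightarrow\log P=0$, which is the heart of the converse direction and uses $v_{p}\circ\phi=v_{p}$ in an essential way.
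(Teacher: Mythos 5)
Your proof is correct, but the route you take in part~(2) differs genuinely from the paper's. After both arguments identify $\phi(a)/a^p$ with a $p$-power root of unity $\zeta$ (via $\ker\log$), the paper never applies $\log$ to the point itself: it raises the relation $\phi(a)=\zeta a^p$ to the $p^k$-th power to kill $\zeta$, obtaining $\phi(b)=b^p$ for $b=a^{p^k}$, then uses the hypothesis $b\in\mathbb{Q}_p^{\text{alg}}$ only to get a finite $\phi$-orbit ($\phi^N(b)=b$ because $\phi$ fixes $\mathbb{Q}_p$ pointwise and permutes the finitely many conjugates of $b$), whence $b^{p^N-1}=1$. You instead take $\log$ of $P$ after the Teichm\"uller normalization, invoke $\phi$-equivariance of the logarithm and the isometry $v_p\circ\phi=v_p$ to derive $\phi(\log P)=p\log P$, and conclude $\log P=0$ by the valuation contradiction. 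Both uses of the algebraicity hypothesis are essential but land in different places: the paper needs the finite $\phi$-orbit, while you need it to compute $\ker\log$ inside a complete, finitely-ramified field (and, implicitly, so that the logarithm series converges and $\phi$ commutes with the relevant limits). Your route requires a couple more moving parts (the normalization step, $\phi\circ\log=\log\circ\phi$), while the paper's $\phi(b)=b^p\Rightarrow b^{p^N-1}=1$ trick is somewhat more elementary; on the other hand the valuation step $\phi(\xi)=p\xi\Rightarrow\xi=0$ is a pleasingly minimal way to close the argument and is a reusable lemma in its own right.
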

 
{\it Proof}. Assertion 1 is clear. 
To check assertion 2 assume $P=a$, $\psi^{\text{alg}}(a)=0$. For some $\pi$ we have
$$\frac{\phi(a)}{a^p}\in \text{Ker}(\log:1+\pi R_{\pi}\ra R_{\pi})=\{\zeta\in 1+R_{\pi};\text{ there exists } k \text{ such that } \zeta^{p^k}=1\}.$$
Hence for $b=a^{p^k}$ we have $\phi(b)=b^p$. Since $b$ is algebraic over ${\mathbb Q}_p$ we have $\phi^N(b)=b$ for some $N\geq 1$. So $b=b^{p^N}$ hence $b$ is a root of unity, hence so is $a$.\qed

\begin{example}
\label{notover}
It is easy to give examples of $\d$-functions that are not totally $\d$-overconvergent.
The following is such a function,
$$f:{\mathbb A}^1(R)\ra R,\ \ \ f(a)=\exp (p\d a):=\sum_{n=1}^{\infty}  \frac{p^n}{n!} (\d a)^n, \ \ a\in R.$$
This can be checked directly but also follows
trivially from Remark \ref{bibi}, assertion 1,  and the proof of assertion 2 in Proposition \ref{inj}.
\end{example}

\section{Scheme theoretic view on $\d_{\pi}$-overconvergence}

We review some basic concepts from \cite{char, over}. From now on whenever a $\pi$ is mentioned it will be assumed to be in $\Pi$.

\subsection{$\pi$-jet spaces \cite{char}}\label{ark}
Let  
 $C_{\pi}(X,Y) \in R_{\pi}[X,Y]$ be the polynomial
 \[C_{\pi}(X,Y):=\frac{X^p+Y^p-(X+Y)^p}{\pi}=\frac{p}{\pi}C_p(X,Y).\]
A $\pi$-{\it derivation} from an $R_{\pi}$-algebra
 $A$ into an $A$-algebra $B$ is a map $\d_{\pi}:A \ra B$ such that $\d_{\pi}(1)=0$ and
$$\begin{array}{rcl}
\d_{\pi}(x+y) & = &  \d_{\pi} x + \d_{\pi} y
+C_{\pi}(x,y)\\
\d_{\pi}(xy) & = & x^p \cdot \d_{\pi} y +y^p \cdot \d_{\pi} x
+\pi \cdot \d_{\pi} x \cdot \d_{\pi} y,
\end{array}$$
 for all $x,y \in A$; cf.  \cite{char, Jo}. Here for $x\in A$ we continue to write $x$ in place of $x\cdot 1_B\in B$. Given a
$\pi-$derivation we always denote by $\phi:A \ra B$ the map
$\phi(x)=x^p+\pi \d_{\pi} x$; then $\phi$ is a ring homomorphism. 

A
$\d_{\pi}$-{\it prolongation sequence} is a sequence $S^*=(S^r)_{r \geq 0}$ of  $R_{\pi}$-algebras $S^r$, $r
\geq 0$, together with $R_{\pi}$-algebra homomorphisms, all of which will be denoted by $\varphi:S^r \ra
S^{r+1}$ and $\pi$-derivations $\d_{\pi}:S^r \ra S^{r+1}$ such that
$\d_{\pi} \circ \varphi=\varphi \circ \d_{\pi}$ on $S^r$ for all $r$.  A morphism
of $\d_{\pi}$-prolongation sequences, $u^*:S^* \ra \tilde{S}^*$ is a sequence
$u^r:S^r \ra \tilde{S}^r$ of $R_{\pi}$-algebra
 homomorphisms such that $\delta_{\pi}
\circ u^r=u^{r+1} \circ \d_{\pi}$ and $\varphi \circ u^r=u^{r+1} \circ
\varphi$.  For $w=\sum_{i=0}^r c_i \phi^i\in \bZ[\phi]$  (respectively for $w$ with $c_i \geq 0$), $S^*$ a $\d_{\pi}$-prolongation
sequence, and $x \in (S^0)^{\times}$ (respectively $x \in S^0$) we
can consider the element $x^w:=\prod_{i=0}^r \varphi^{r-i}
\phi^i(x)^{c_i} \in (S^r)^{\times}$ (respectively $x^w \in S^r$).

As in the introduction we may consider the Fermat quotient operator
 $$\d_{\pi}:R_{\pi} \ra R_{\pi},\ \ \ \d_{\pi} x=\frac{\phi(x)-x^p}{\pi};$$ 
 clearly $\d_{\pi}$ is a $\pi$-derivation.
 One can consider the  $\d_{\pi}$-prolongation sequence $R_{\pi}^*$ where
$R_{\pi}^r=R_{\pi}$ for all $r$. By a $\d_{\pi}$-{\it prolongation sequence over $R_{\pi}$} we
understand a prolongation sequence $S^*$ equipped with a morphism
$R_{\pi}^* \ra S^*$. From now on
all our $\d_{\pi}$-prolongation sequences are
assumed to be over $R_{\pi}$.

Assume $\pi|\pi_0$.
Note that if $S^*=(S^r)_{r \geq 0}$ is a $\d_{\pi_0}$-prolongation sequence
 such that each $S^r$ is flat over $R_{\pi_0}$ then the sequence
$$S^*\otimes_{R_{\pi_0}} R_{\pi}=(S^r \otimes_{R_{\pi_0}} R_{\pi})_{r \geq 0}$$
has  a natural structure of $\d_{\pi}$-prolongation sequence.
Indeed, letting $\phi:S^r \ra S^{r+1}$ denote, as usual,
the ring homomorphisms $\phi(x)=x^p+\pi_0\d_{\pi_0} x$ one can extend these $\phi$'s to ring homomorphisms $\phi:S^r \otimes_{R_{\pi_0}}R_{\pi} \ra S^{r+1}\otimes_{R_{\pi_0}}R_{\pi}$
by the formula $\phi(x \otimes y)=\phi(x) \otimes \phi(y)$ where $\phi:R_{\pi}\ra R_{\pi}$ is given, as usual, by $\phi(y)=y^p+\pi \d_{\pi} y$. Then one can define $\pi$-derivations 
$\d_{\pi}:S^r \otimes_{R_{\pi_0}}R_{\pi}\ra S^{r+1}\otimes_{R_{\pi_0}}R_{\pi}$
by 
$$\d_{\pi}(z)=\frac{\phi(z)-z^p}{\pi},\ \ \ z \in S^r\otimes_{R}R_{\pi}.$$
 With these $\d_{\pi}$s the sequence
$S^*\otimes_{R_{\pi_0}}R_{\pi}$ is a $\d_{\pi}$-prolongation sequence.

Denote by ${\bf Prol}_{\pi}$  the category of $\d_{\pi}$-prolongation sequences $S^*=(S^n)$ such that
$S^n$ are Noetherian, $p$-adically complete and flat over $R_{\pi}$.
For any $S^*\in {\bf Prol}_{\pi}$ and any affine $S^0$-scheme of finite type $V=\textrm{Spec } A$ there exists
a  $\d_{\pi}$-prolongation sequence, $A^*=(A^r)_{r \geq 0}$ in ${\bf Prol}_{\pi}$, unique up to a canonical isomorphism, with $A^0=\widehat{A}$, equipped with a morphism $S^*\ra A^*$, such that for any $\d_{\pi}$-prolongation sequence $B^*$ in ${\bf Prol}_{\pi}$ equipped with a morphism $S^*\ra B^*$ 
 and any $S^0$-algebra homomorphism $u:A \ra B^0$ there exists a unique morphism  $u^*:A^* \ra B^*$ over $S^*$ with $u^0=u$.
We define the $\pi$-{\it jet spaces}  of $V$ over $S^*$ as the $p$-adic formal schemes
$J^r_{\pi}(V/S^*):=\text{Spf}\ \widehat{A^r}$. This construction immediately globalizes to the case $V$ is not necessarily affine (such that the construction commutes, in the obvious sense, with \'{e}tale maps). 
For $S^*=R^*_{\pi}$ we write $J^r(V):=J^r(V/R^*_{\pi})$. 
For $V$  smooth over $R_{\pi}$
we have a natural identification
\begin{equation}
\label{ident1}
\cO^r_{\pi}(V)\simeq \cO(J^r_{\pi}(V))
\end{equation}
between the ring $\cO^r_{\pi}(V)$ of $\d_{\pi}$-functions $V(R_{\pi}) \ra R_{\pi}$ and the
ring of global functions $\cO(J^r_{\pi}(V))$; cf.  \cite[Prop. 1.4, Rmk. 1.6]{char}.
 If $V=G$ is a group scheme over $R_{\pi}$ then 
 $$f:J^r_{\pi}(G)\ra \widehat{{\mathbb A}^1_{R_{\pi}}}=\widehat{{\mathbb G}_{a/R_{\pi}}}$$
 is a group homomorphism into the additive group if and only if the corresponding map $G(R_{\pi})\ra R_{\pi}$ is a group homomorphism; recall that such an $f$ is called a $\d_{\pi}$-{\it character} of $G$.
 
 For $\pi=p$ we write $\d:=\d_p$, $J^r(V):=J^r_p(V)$, $\cO^r(V):=\cO^r_p(V)$.

As a prototypical example if $V={\mathbb A}^N_{R_{\pi}}=\textrm{Spec }R_{\pi}[x]$ is the affine space  
where $x$ denotes an $N$-tuple of variables, then $J^r_{\pi}(V)=\textrm{Spf }R_{\pi}[x,\d_{\pi} x,...,\d_{\pi}^r x]^{\widehat{\ }}$ where each $\d_{\pi}x,...,\d_{\pi}^r x$ are new $N$-tuples of variables. More generally, if $V$ is affine and has \'{e}tale coordinates $T:V\ra {\mathbb A}^d_{R_{\pi}}$ then we have natural isomorphisms
$$\cO(J^r_{\pi}(V))\simeq \cO(\widehat{V})[\d_{\pi}T,...,\d_{\pi}^rT]^{\widehat{\ }}.$$

Let $V$ be a scheme over $R$. By  a {\it Frobenius lift} on $X$ we mean a morphism $\phi^V:V\ra V$
 of $\bZ$-schemes, compatible with the automorphism  $\phi$ of $R$, such that $\phi^V$ mod $p$ is the $p$-power Frobenius on $V\otimes_R R/pR$. For $V$ smooth projective over $R$ the existence of a Frobenius lift is equivalent to the existence of a section of $J^1(V)\ra \widehat{V}$. More generally let  $V_{\pi}$ be a scheme over $R_{\pi}$. By  a {\it relative Frobenius lift} on $V_{\pi}$ we mean a morphism $\phi^V:V\ra V$
 of $\bZ$-schemes, compatible with the automorphism $\phi$ of $R_{\pi}$, such that $\phi^V$ mod $\pi$ is the $p$-power Frobenius on $V\otimes_{R_{\pi}} R_{\pi}/\pi R_{\pi}$. For $V_{\pi}$ smooth projective over $R_{\pi}$ the existence of a relative Frobenius lift is equivalent to the existence of a section of $J^1_{\pi}(V_{\pi})\ra \widehat{V_{\pi}}$.

 For any scheme $V/R$ we write 
 $V_{R_{\pi}}:=V \otimes_{R} R_{\pi}$.
  Let $V/R$ be a smooth affine scheme. The $\d$-prolongation sequence
 $(\cO(J^r(V)))_{r \geq 0}$
induces a structure of $\d_{\pi}$-prolongation sequence on
the sequence  $(\cO(J^r(V)) \otimes_{R} R_{\pi})_{n \geq 0}$.
By the universality property of the $\d_{\pi}$-prolongation sequence
 $(\cO(J^r_{\pi}(V_{R_{\pi}})))_{r \geq 0}$ we get, for any $\pi_0$ with $\pi|\pi_0$,  a canonical morphism of $\d_{\pi}$-prolongation sequences
\begin{equation}
\label{tra}\cO(J^r_{\pi}(V_{R_{\pi}}))\ra \cO(J^r_{\pi_0}(V_{\pi_0}))\otimes_{R_{\pi_0}} R_{\pi}.\end{equation}
Note that  the map
\begin{equation}
\label{ris}
\cO(J^r_{\pi}(V_{R_{\pi}}))\ra \cO(J^r(V))\otimes_{R} R_{\pi}\end{equation}
factors through the map \ref{tra}. On the other hand the map \ref{ris} induces a map
\begin{equation}
\label{risp}
\cO(J^r_{\pi}(V_{R_{\pi}}))[1/p]\ra \cO(J^r(V))\otimes_{R} R_{\pi}[1/p].\end{equation}
The maps \ref{tra}, \ref{ris}, \ref{risp} induce such maps for $V$ smooth,  not necessarily affine.

\begin{proposition}
\label{inj} For $V$ smooth,

1) the map (\ref{tra}) is injective.

2) Assume $V/R$ has relative dimension $\geq 1$ and assume  $r\geq 1$ and $e(\pi)$ is sufficiently big (e.g. $\geq p-1$). Then the injective map \ref{risp} is not surjective. 
\end{proposition}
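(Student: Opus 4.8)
The plan is to reduce both assertions — by the locality of the statements and the étale‑coordinate description of jet spaces — to an explicit computation in a restricted power series polynomial ring.

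\emph{Part 1.} Injectivity of a map of sheaves may be checked on an affine cover, so I may assume $V$ is affine with étale coordinates $T=(T_1,\dots,T_d)$; then $\cO(J^r_\pi(V_{R_\pi}))\simeq \cO(\widehat{V_{R_\pi}})[\d_\pi T,\dots,\d_\pi^rT]^{\widehat{\ }}$ and, since $\cO(\widehat{V_{\pi_0}})\otimes_{R_{\pi_0}}R_\pi=\cO(\widehat{V_{R_\pi}})$, the target of \ref{tra} is $\cO(\widehat{V_{R_\pi}})[\d_{\pi_0}T,\dots,\d_{\pi_0}^rT]^{\widehat{\ }}$. Working out the base‑change $\d_\pi$‑structure on $\cO(J^r_{\pi_0}(V_{\pi_0}))\otimes_{R_{\pi_0}}R_\pi$ described in the excerpt, an induction on $i$ shows that \ref{tra} sends $\d_\pi^iT_j$ to $c_i\,\d_{\pi_0}^iT_j+Q_{ij}$, where $c_i=\prod_{m=0}^{i-1}\phi^m(\pi_0/\pi)$ is a nonzero element of $R_\pi$ and $Q_{ij}$ is a polynomial with $R_\pi$‑coefficients in the $\d_{\pi_0}^{i'}T_{j'}$ with $i'<i$, vanishing at the origin. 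Assigning $\d_{\pi_0}^iT_j$ and $\d_\pi^iT_j$ the weight $p^i$, the ring homomorphism $\phi$ multiplies weights by $p$, so each image $c_i\d_{\pi_0}^iT_j+Q_{ij}$ is weight‑homogeneous of weight $p^i$ and \ref{tra} preserves the resulting weight grading. On each (finitely generated, free) weight‑$W$ summand, \ref{tra} is a square matrix over $\cO(\widehat{V_{R_\pi}})$ whose determinant becomes a unit after inverting $p$ — because the triangular substitution $\d_\pi^iT_j\mapsto c_i\d_{\pi_0}^iT_j+Q_{ij}$ is invertible over $K_\pi$ — and hence, $\cO(\widehat{V_{R_\pi}})$ being $p$‑torsion‑free, is a nonzerodivisor; so each graded piece of \ref{tra} is injective. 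Since a restricted power series is precisely a sum of its weight‑homogeneous components tending $p$‑adically to $0$, \ref{tra} is injective. As \ref{ris} is the case $\pi_0=p$ of \ref{tra} (because $R_p=R$) and \ref{risp} is obtained from \ref{ris} by inverting $p$ between $p$‑torsion‑free rings, \ref{ris} and \ref{risp} are injective as well.

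\emph{Part 2.} Shrinking $V$ I may assume it is affine with an étale coordinate $t:=T_1$, and I use the substitution above with $\pi_0=p$, so $v_p(c_r)=r\,v_p(p/\pi)=r\,\frac{e-1}{e}$ with $e:=e(\pi)$. Consider the restricted power series (restricted since $p\ge 5$)
$$g=\exp(p\,\d^rt)=\sum_{n\ge 0}\frac{p^n}{n!}\,(\d^rt)^n\in\cO(J^r(V))\subseteq\cO(J^r(V))\otimes_RR_\pi[1/p].$$
I claim $g$ is not in the image of \ref{risp}. Suppose $g$ were the image of $h=\sum_\gamma P_\gamma\prod(\d_\pi^iT_j)^{\gamma_{ij}}$, where the $P_\gamma\in\cO(\widehat{V_{R_\pi}})[1/p]$ are uniformly bounded and tend $p$‑adically to $0$. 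In the substitution $\d_\pi^iT_j\mapsto c_i\d^iT_j+Q_{ij}$ the variable $\d^rt$ occurs only through the top term $c_r\d^rt$ of the image of $\d_\pi^rt$, and every $Q_{ij}$ vanishes at the origin; therefore the coefficient of the monomial $(\d^rt)^n$ in $g$, computed from $h$, equals $P_{\gamma(n)}\,c_r^{\,n}$, where $\gamma(n)$ has $\gamma_{r1}=n$ and all other entries $0$. Hence $P_{\gamma(n)}=p^n/(n!\,c_r^{\,n})$, and
$$v_p\!\left(\frac{p^n}{n!\,c_r^{\,n}}\right)=n\left(\frac{p-2}{p-1}-r\,\frac{e-1}{e}\right)+\frac{s_p(n)}{p-1}.$$
For $r\ge 1$ and $e(\pi)\ge p-1$ one has $\frac{p-2}{p-1}-r\frac{e-1}{e}\le 0$, so these valuations do not tend to $+\infty$ (and are unbounded below when the coefficient of $n$ is negative), contradicting that $\{P_\gamma\}$ is bounded and tends to $0$. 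Thus $g$ is not in the image, so \ref{risp} is not surjective.

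\emph{Main obstacle.} In part 1 the delicate points are verifying that the $Q_{ij}$ are weight‑homogeneous of the correct weight and vanish at the origin, and that the weight grading survives $p$‑adic completion so that injectivity can be tested weight‑by‑weight. In part 2 the crux is the exact computation of the coefficient of $(\d^rt)^n$ — that neither the lower‑order jet variables nor the tails $Q_{ij}$ can feed into this single monomial — since this pins down $P_{\gamma(n)}$, and with it the valuation estimate and the precise threshold $e(\pi)\ge p-1$.
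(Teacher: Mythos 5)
Both parts of your argument are correct, but each is carried out by a route genuinely different from the paper's, and it is worth noting what is gained and lost. For Part 1 the paper's proof is shorter: it observes that, over $K_\pi$, both $\cO(J^r_\pi(V_{R_\pi}))$ and $\cO(J^r(V))\otimes_R R_\pi$ embed into one and the same ring $K_\pi[[T,\phi(T),\dots,\phi^r(T)]]$ (the change of variables to the $\phi^i(T)$'s being a polynomial bijection over $K_\pi$), and that the map \ref{ris} --- hence \ref{tra}, through which it factors --- is compatible with those embeddings; injectivity is then read off directly, granted $\pi$-torsion-freeness of both sides. You instead make the substitution $\d_\pi^iT_j\mapsto c_i\d_{\pi_0}^iT_j+Q_{ij}$ explicit, endow the jet rings with the weight grading $\mathrm{wt}(\d^iT_j)=p^i$, check it is preserved, and deduce injectivity weight-by-weight from the triangular structure; this is more computational and self-contained, and the details (that $Q_{ij}$ lies in the ideal generated by $\d_{\pi_0}^{i'}T_j$ for $i'<i$, with no dependence on the base ring, and that the diagonal entries are nonzerodivisors by flatness over $R_\pi$) do check out. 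For Part 2 the underlying counterexample is the same kind of exponential series, but the paper works with $\exp(p\,\d t)$, for which the substitution is the single rescaling $\d_\pi t=(p/\pi)\d t$ with no lower-order tail, so the coefficient comparison and the estimate $n/e-v_p(n!)\not\to\infty$ for $e\ge p-1$ are immediate. Your choice of $\exp(p\,\d^r t)$ forces you to argue, via the vanishing of the $Q_{ij}$ at the origin and the lower-triangular shape of the substitution, that the $(\d^rt)^n$-coefficient is exactly $P_{\gamma(n)}c_r^n$; the payoff is a marginally stronger conclusion for $r\ge2$ (non-surjectivity already for $e$ below $p-1$), though for the statement as written the paper's simpler element would do. One small cosmetic point: the relevant torsion-freeness is over $R_\pi$ (equivalently $\pi$-torsion-freeness), not literally ``$p$-torsion-free'', though for a DVR with $\pi^e$ a unit times $p$ these coincide. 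Finally, both you and the paper reduce Part 2 to an affine chart with \'etale coordinates without further comment about why non-surjectivity localizes; since the paper does the same, I do not regard this as a gap specific to your write-up.
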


We will usually view the maps \ref{tra} and \ref{ris} as  inclusions. Note that these maps are very far from being flat in general.

\bigskip

{\it Proof}.  Assertion 1 was proved in \cite[Prop. 2.2]{over} but  we need to
 recall the argument. We may assume $V$ is affine and possesses \'{e}tale
coordinates $T:V\ra {\mathbb A}^d$. Then  the source  of \ref{ris} embeds naturally into the ring
$$K_{\pi}[[T,\d_{\pi}T,...,\d_{\pi}^r T]]=K_{\pi}[[T,\phi(T),...,\phi^r (T)]]$$
while the target of \ref{ris} naturally embeds into the same ring,
$$K_{\pi}[[T,\d T,...,\d^r T]]=K_{\pi}[[T,\phi(T),...,\phi^r (T)]].$$
This implies assertion 1. 

To check assertion 2 we may assume again that
$V$ is affine and possesses \'{e}tale
coordinates $T$. Now take any $\pi$ with $e(\pi)=:e\geq p-1$ and consider the element,
$$f=\exp(p\d T):=\sum_{n=1}^{\infty}  \frac{p^n}{n!} (\d T)^n\in \cO(\widehat{V})[\d T,...,\d^r T]^{\widehat{\ }}=\cO(J^r(V)).$$
 Then $f$ identifies in 
the ring 
\begin{equation}
\label{thering}
K_{\pi}[[T,\d_{\pi}T,...,\d_{\pi}^r T]]\end{equation}
 with the element
\begin{equation}
\label{dormi}
\sum_{n=1}^{\infty}  \frac{\pi^n}{n!} (\d_{\pi} T)^n.\end{equation}
Now if $f$ is in the image of the map \ref{risp} then \ref{dormi} would be identified in 
\ref{thering} with an element of the form
\begin{equation}
\label{alt}
\sum_{n=1}^{\infty}  b_n (\d_{\pi} T)^n,
\end{equation}
where $b_n\in K_{\pi}$ satisfy $v_p(b_n)\ra \infty$. Comparing \ref{alt} and \ref{dormi}
we get that 
$$\frac{n}{e}-v_p(n!)\ra \infty\ \ \ \text{as}\ \ \ n\ra \infty$$
which is, of course, false if $e\geq p-1$. This ends the proof of assertion 2.
\qed

\begin{remark}\label{bibi}
Let $V$ be a smooth scheme over $R$.

1) Under the identification 
$\cO^r(V)=\cO(J^r(V))$ (cf.  
\ref{ident1}) a function $f\in \cO^r(V)$ 
is $\d_{\pi}$-overconvergent  with polar order at most $\nu$ if and only if the function
$p^{\nu}f\otimes 1\in \cO(J^r(V))\otimes_{R}R_{\pi}$ belongs to the image of
the map \ref{ris}.

2) 
Proposition \ref{inj}  trivially implies the uniqueness of $f_{\pi,\nu}$ in diagram \ref{pantof2} and the commutativity 
of diagram \ref{pantof2222}.

3) Proposition \ref{inj} also trivially implies that if $V$ is affine, possesses \'{e}tale coordinates, and has relative dimension at least $1$ over $R$ then, for any $r\geq 1$ there exist $\d$-functions on $V$ which are not totally $\d$-overconvergent. This implies the assertion that the inclusion
of sheaves \ref{notaniso} is not an isomorphism.
 \end{remark}

\begin{remark}
Let us say that $f\in \cO^r(V)=\cO(J^r(V))$ has {\it polar $\pi$-order} $N$ if $N$ is the smallest non-negative integer such that the function $f\otimes \pi^N\in \cO(J^r(V))\otimes_R R_{\pi}$
belongs to the image of the map \ref{ris}. It is easy to check that if $f$ has polar order $\nu$ and polar $\pi$-order $N$ then
$$[N/e]\leq \nu\leq [N/e]+1,\ \ \ e:=e(\pi),$$
where $[\ \ ]$ stands for the integral part function. Note that if $f$ has polar $\pi$-order $N$ then the function
$f^*_{\pi}\in \cO_{\pi}^r(V_{R_{\pi}})=\cO(J_{\pi}^r(V_{R_{\pi}}))$ mapping to $f\otimes \pi^N$ via \ref{ris}
satisfies
$$f^*_{\pi} \not\equiv 0 \bmod \pi \textrm{ in } \cO^1(V_{R_{\pi}}).$$
so one can ask for the reduction of $f^*_{\pi}$ mod $\pi$. It is interesting to remark that 
 the reduction of $f^*_{\pi}$ mod $\pi$ is, in general, unrelated to the reduction of $f$ mod $p$.
 Here is a simple example that actually effectively appears in the theory, cf.  for instance, \cite[Eq. 4.43]{local}. Indeed, let $V$ be an affine curve over $R$ possessing an \'{e}tale coordinate $T\in \cO(V)$
 and let $f\in \cO^1(V)=\cO(\widehat{V})[\d T]^{\widehat{\ }}$ have the form
 $$f=\alpha \d T +\beta+p\gamma,\ \ \ \alpha,\beta\in \cO(\widehat{V}),\ \ \ 
 \gamma \in \cO^1(V)$$
 hence
 $$f \equiv \alpha \d T + \beta \bmod p \textrm{ in } \cO^1(V).$$
 However a simple computation shows that if $f$ has polar order $\geq 2$ then
 $$f^*_{\pi} \equiv \gamma^* \bmod \pi \textrm{ in } \cO^1_{\pi}(V_{R_{\pi}}),$$
 where 
 $$\gamma^*:=p\pi^N\gamma\in \cO^1_{\pi}(V_{R_{\pi}}).$$
 In concrete examples one sometimes has information about $\alpha,\beta$ but not about $\gamma$. Hence one has information about $f$ mod $p$ but not about $f^*_{\pi}$ mod $\pi$.
\end{remark}

\subsection{Analytic continuation for $\d^{\text{alg}}$-functions}
Let $V$ be a smooth scheme over $R$ of relative dimension $d$ and let ${\mathfrak R}$ be any of the rings
$R,R_{\pi},R^{\text{alg}},\OCp$. Denote by ${\mathfrak M}$ the maximal ideal of ${\mathfrak R}$. Let $\overline{P}\in V(k)$ be a $k$-point of $V$.
By the {\it unit ball} in $V({\mathfrak R})$ with center $\overline{P}$ we mean the set
 ${\mathbb B}(V,\overline{P},{\mathfrak R})$ of all points $P\in V({\mathfrak R})$ whose image in $V(k)$ equals $\overline{P}$. In particular, $V({\mathfrak R})$ is a disjoint union of unit balls:
$$V({\mathfrak R})=\coprod_{\overline{P}\in V(k)} {\mathbb B}(V,\overline{P},{\mathfrak R}).$$
Fix $\overline{P}\in V(k)$ and fix an isomorphism $\sigma$ between the completion of $V$ along $\overline{P}$ and $\text{Spf}\ R[[t]]$, where $t$ is a $d$-tuple of variables. The the unit ball
${\mathbb B}(V,\overline{P},{\mathfrak R})$ is in bijection with ${\mathfrak M}^d$: the bijection is given by attaching to any $\beta\in {\mathfrak M}^d$ the ${\mathfrak R}$-point  $P\in V({\mathfrak R})$
defined by the homomorphism 
$$\cO(U)\ra R[[t]]\ra {\mathfrak R},\ \ \ t\mapsto t(P):=\beta,$$
 for $U\subset V$ affine, containing $\overline{P}$. Note that ${\mathfrak M}^d$ has the $p$-adic topology hence the ball ${\mathbb B}(V,\overline{P},{\mathfrak R})$ inherits this $p$-adic topology (which is independent of the isomorphism $\sigma$). 
 
 \begin{proposition}\label{anana}
 Let $g:V(R^{\text{alg}})\ra K^{\text{alg}}$ be a $\d^{\text{alg}}$-function. Assume the reduction mod $p$ of $V$ is Zariski connected and $g$ vanishes on one ball ${\mathbb B}(V,\overline{P},R^{\text{alg}})$. Then 
 $g$ vanishes everywhere. 
 \end{proposition}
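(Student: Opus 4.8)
The plan is to reduce the statement to the analogous analytic-continuation property for $\d_{\pi}$-functions on $V_{R_\pi}$ over a single $\pi$, which itself follows from the fact that a $\d_\pi$-function vanishing on a ball gives rise to a power series (in the \'etale coordinates $t$ and their $\d_\pi$-iterates) that vanishes on a polydisc, hence is identically zero. First I would fix a $k$-point $\overline{Q}\in V(k)$ and a ball ${\mathbb B}(V,\overline{Q},R^{\text{alg}})$ on which we want to show $g$ vanishes; since $g$ is assumed to vanish on ${\mathbb B}(V,\overline{P},R^{\text{alg}})$, and the reduction $V\otimes_R k$ is Zariski connected (hence, being of finite type over the algebraically closed field $k$, connected as a topological space and in fact every two closed points are linked by a chain of irreducible curves), it suffices to treat the case where $\overline{P}$ and $\overline{Q}$ lie on a common irreducible curve $\overline{C}\subset V\otimes_R k$; the general case follows by chaining finitely many such steps.

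Next, working locally, I would choose an affine open $U\subset V$ meeting the generic point of this curve and carrying \'etale coordinates $T:U\to {\mathbb A}^d_R$. By the definition of $\d^{\text{alg}}$-function, for each $\pi$ there is an integer $\nu_\pi$ and a $\d_\pi$-function $g_{\pi,\nu_\pi}:U(R_\pi)\to R_\pi$ with $g = p^{-\nu_\pi}g_{\pi,\nu_\pi}$ on $U(R_\pi)$. Under the identification $\cO^r_\pi(U_{R_\pi})\simeq \cO(\widehat{U})[\d_\pi T,\dots,\d_\pi^r T]^{\widehat{\ }}$ from \ref{ident1}, the function $g_{\pi,\nu_\pi}$ is given by a restricted power series $G_\pi$ in $T$ and the $\d_\pi^j T$. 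Choosing $\pi$ so that ${\mathbb B}(V,\overline{P},R^{\text{alg}})$ meets $U(R_\pi)$ (and so that $\overline{P},\overline{Q}$ are both in the image of $\widehat{U}$ in $V(k)$), the hypothesis tells us that $G_\pi$ vanishes identically on the subset of the polydisc cut out by the ball ${\mathbb B}(U,\overline{P},R_\pi)$; but that subset is Zariski dense in the relevant polydisc over $K_\pi$, so $G_\pi$ is the zero power series. Hence $g_{\pi,\nu_\pi}=0$ as a $\d_\pi$-function on all of $U(R_\pi)$, so $g$ vanishes on $U(R_\pi)$, and then — enlarging $\pi$ — on $U(R^{\text{alg}})$, in particular on ${\mathbb B}(V,\overline{Q},R^{\text{alg}})\cap U(R^{\text{alg}})$. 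Since $\overline{Q}$ was arbitrary on the curve and the curve connects $\overline{P}$ to $\overline{Q}$, we conclude $g\equiv 0$ everywhere.

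The step I expect to be the main obstacle is making precise the passage ``$g$ vanishes on a ball in $U(R^{\text{alg}})$ $\Rightarrow$ $g$ vanishes on a neighboring ball,'' i.e. propagating along the curve $\overline{C}$ rather than merely within one affine chart. Concretely, one must check that if $G_\pi$ (a restricted power series over $R_\pi$ in $\cO(\widehat{U})[\d_\pi T,\dots]^{\widehat{\ }}$) vanishes on the polydisc corresponding to one $k$-point $\overline{P}$ of $\widehat{U}$, then it vanishes on the whole formal/rigid space $J^r_\pi(U_{R_\pi})$ — which uses the irreducibility of $\widehat{U}$ (equivalently, the integral domain property of $\cO(\widehat{U})[1/p]$, coming from smoothness and connectedness of the reduction) so that a $p$-adic function vanishing on one unit ball of an irreducible smooth formal scheme vanishes identically. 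The bookkeeping of which $\pi$'s work (every point of the two balls and the connecting curve must be defined over a common $R_\pi$, which is possible since any finite set of points of $V(R^{\text{alg}})$ and finitely many curves are defined over some $K_\pi$, $\pi\in\Pi$, by \ref{caldura}) is routine but needs to be stated carefully. A cleaner alternative, which I would actually adopt, is simply to invoke the corresponding result from \cite{over} for each fixed $\pi$ (analytic continuation of $\d_\pi$-functions on $V_{R_\pi}$, whose reduction mod $\pi$ equals the reduction mod $p$ of $V$ and is therefore Zariski connected) and deduce the $\d^{\text{alg}}$-statement by the $\pi$-by-$\pi$ argument above; this isolates the genuinely new content — the glueing over all $\pi\in\Pi$ — from the classical single-$\pi$ continuation principle.
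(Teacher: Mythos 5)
Your overall strategy — reduce to a single $\pi$, pass to a (restricted) power-series presentation of $g_{\pi,\nu}$ via \'{e}tale coordinates, argue that vanishing on a ball forces that power series to vanish, and then propagate using connectedness of the special fiber — is the right one and matches the paper's in spirit. The paper, however, organizes the argument around a single key injectivity statement that makes the explicit chaining unnecessary: fixing an affine open with \'{e}tale coordinates around $\overline{P}$, the ``Taylor expansion at $\overline{P}$'' map
$$\cO(J^r_{\pi}(V_{R_{\pi}}))=\cO(\widehat{V_{R_{\pi}}})[\d_{\pi}t,\dots,\d_{\pi}^rt]^{\widehat{\ }}\longrightarrow R_{\pi}[[t]][\d_{\pi}t,\dots,\d_{\pi}^rt]^{\widehat{\ }}$$
is injective precisely because $V\otimes k$ is Zariski connected — for a smooth scheme over the algebraically closed field $k$, connected means irreducible (indeed integral), so the map $\cO(V\otimes k)\to k[[t]]$ is injective, and this lifts by flatness and completeness. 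Once that injectivity is in hand, the vanishing of the formal expansion $G_{\pi,\nu}$ immediately kills $g_{\pi,\nu}$ globally on the chart, with no intermediate propagation.

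Two points in your write-up deserve attention. First, you choose $U$ to ``meet the generic point of the curve'' $\overline{C}$ and then, a few lines later, require $\overline{P},\overline{Q}\in U(k)$; these two requirements are in tension, and the choice of $U$ should simply be an affine open containing $\overline{P}$ with \'{e}tale coordinates (no curve is needed — irreducibility of $V\otimes k$ already guarantees that any two affine charts with nonempty special fibers overlap in the special fiber, so the propagation from one chart to another is a single step, not a chain along curves). Second, the phrase ``that subset is Zariski dense in the relevant polydisc'' conflates two distinct facts. The image of the ball under $a\mapsto(a,\d_{\pi}a,\dots,\d_{\pi}^ra)$ is emphatically \emph{not} Zariski dense in the polydisc $(\pi R_{\pi})^{d(r+1)}$ (it is a $d$-dimensional graph in a $d(r+1)$-dimensional space); what one actually needs is (i) that a restricted power series in $t,\d_{\pi}t,\dots,\d_{\pi}^rt$ vanishing on all prolonged tuples $(\lambda,\d_{\pi}\lambda,\dots,\d_{\pi}^r\lambda)$, $\lambda\in(\pi R_{\pi})^d$, is the zero power series (this is the ``evaluation is injective'' content underlying \ref{ident1}), and (ii) the injectivity of the Taylor-expansion map noted above. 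Both are used implicitly in your step; the paper at least isolates (ii) as the place where connectedness enters. Finally, the ``cleaner alternative'' you suggest — quoting a ready-made $\pi$-by-$\pi$ analytic continuation principle from \cite{over} — does not correspond to a statement actually proved there, so it is safer to carry out the injectivity argument directly as the paper does.
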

 
 {\it Proof}. Since the reduction of $V$ mod $p$ is Zariski connected
 we may assume $V$ is affine and has \'{e}tale coordinates $t$ around $\overline{P}$ over $R$. Consider an isomorphism between the completion of $V$ along $\overline{P}$ and $\text{Spf}\ R[[t]]$. For any $\pi$ we have a homomorphism
 $$\cO(J^r_{\pi}(V_{R_{\pi}}))=\cO(\widehat{V_{R_{\pi}}})[\d_{\pi}t,...,\d^r_{\pi}t]^{\widehat{\ }}
 \ra R_{\pi}[[t]][\d_{\pi}t,...,\d^r_{\pi}t]^{\widehat{\ }}.$$
 This homomorphism is injective because the reduction mod $p$ of $V$ is connected.
 Recall that, by definition, $g$ produces $\d_{\pi}$-functions $g_{\pi,\nu}:V(R_{\pi})\ra R_{\pi}$,
 $g\in \cO(J^r_{\pi}(V_{R_{\pi}}))$ for 
 $\nu$ sufficiently big. Then the image 
 $$G_{\pi,\nu}=G_{\pi,\nu}(t,\d_{\pi}t,...,\d^r_{\pi}t)\in R_{\pi}[[t]][\d_{\pi}t,...,\d^r_{\pi}t]^{\widehat{\ }}$$
  of $g_{\pi,\nu}$ has the property that
  $$G_{\pi,\nu}(\lambda,\d_{\pi}\lambda,...,\d^r_{\pi}\lambda)=0\ \ \ \text{for all}\ \ \ \lambda\in (\pi R_{\pi})^d.$$
  This immediately implies that $G_{\pi,\nu}=0$ and hence we get $g_{\pi,\nu}=0$. Since this is true for all $\pi$ we get $g=0$.
 \qed
 
 \bigskip
 
 We have the following direct consequence of Proposition \ref{anana}
 
 \begin{corollary}
 Let $h:V(\OCp)\ra {\mathbb C}_p$ be a $\d^{{\mathbb C}_p}$-function. Assume the reduction mod $p$ of $V$ is Zariski connected and $h$ vanishes on one ball ${\mathbb B}(V,\overline{P},\OCp)$. Then 
 $h$ vanishes everywhere. 
 \end{corollary}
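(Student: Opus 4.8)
The plan is to deduce the statement directly from Proposition \ref{anana} by a density argument, so the proof is essentially formal. First recall that, by definition of a $\d^{{\mathbb C}_p}$-function, $h$ is continuous for the $p$-adic topologies and there is a (unique) $\d^{\text{alg}}$-function $g:V(R^{\text{alg}})\ra K^{\text{alg}}$ such that the restriction of $h$ to $V(R^{\text{alg}})\subset V(\OCp)$ agrees with $g$ under the inclusion $K^{\text{alg}}\subset {\mathbb C}_p$.

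Next I would fix $\overline{P}\in V(k)$ as in the hypothesis and observe that the ball ${\mathbb B}(V,\overline{P},R^{\text{alg}})$ is contained in the ball ${\mathbb B}(V,\overline{P},\OCp)$: under the bijections of each ball with powers of the respective maximal ideals (described just before Proposition \ref{anana}), this inclusion corresponds simply to the inclusion $(\mathfrak{m}_{R^{\text{alg}}})^d\subset(\mathfrak{m}_{\OCp})^d$ of powers of maximal ideals. Hence, since $h$ vanishes on ${\mathbb B}(V,\overline{P},\OCp)$, the function $g$ vanishes on ${\mathbb B}(V,\overline{P},R^{\text{alg}})$. Applying Proposition \ref{anana} — here the hypothesis that the reduction mod $p$ of $V$ is Zariski connected is used — we conclude that $g$ vanishes on all of $V(R^{\text{alg}})$, and therefore $h$ vanishes on the subset $V(R^{\text{alg}})\subset V(\OCp)$.

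Finally, since $\OCp=\widehat{R^{\text{alg}}}$, the ring $R^{\text{alg}}$ is $p$-adically dense in $\OCp$, and likewise $\mathfrak{m}_{R^{\text{alg}}}$ is dense in $\mathfrak{m}_{\OCp}$; passing through the ball bijections with $\mathfrak{M}^d$ shows that each ball ${\mathbb B}(V,\overline{P}',R^{\text{alg}})$ is dense in ${\mathbb B}(V,\overline{P}',\OCp)$, and hence, since $V(\OCp)$ is the disjoint union of its unit balls indexed by $V(k)$, that $V(R^{\text{alg}})$ is dense in $V(\OCp)$. As $h$ is continuous and vanishes on the dense subset $V(R^{\text{alg}})$, it vanishes everywhere, which is the desired conclusion.

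The only point requiring (minor) care — and the closest thing to an obstacle — is checking that the topology transported onto each ball ${\mathbb B}(V,\overline{P}',\OCp)$ from $\mathfrak{M}^d$ is the one for which $h$ is continuous and for which $V(R^{\text{alg}})$ is dense; this is immediate from the discussion preceding Proposition \ref{anana}, where it is noted that the $p$-adic topology on the ball is independent of the chosen local coordinate isomorphism $\sigma$. With that remark in hand the argument above goes through verbatim.
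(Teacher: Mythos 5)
The proposal is correct and takes essentially the route the paper intends: the paper states the corollary as a ``direct consequence'' of Proposition \ref{anana} without writing out a proof, and your three-step argument (restrict to $R^{\text{alg}}$-points to apply Proposition \ref{anana}, then invoke density of $R^{\text{alg}}$ in $\OCp$ together with continuity of $h$) is exactly that deduction. Your care about the topology on the balls and the independence of the coordinate chart is a reasonable sanity check and introduces no gap.
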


\subsection{$\pi$-jets  of formal groups}
Our aim here is to improve upon a criterion for $\pi$-jets of formal groups in \cite[Sec. 2.2]{over}.

\

Start with a formal group law in $g$ variables, ${\mathcal F}\in S[[{\bf T}_1,{\bf T}_2]]^g$ (where ${\bf T}_1,{\bf T}_2$ are $g$-tuples of variables) over a Noetherian flat $R$-algebra $S$ and let $S^*$ be a $\d_{\pi}$-prolongation sequence in ${\bf Prol}_p$ with $S^0=\widehat{S}$.   One has a natural
$p$-prolongation sequence
$$(S^r[[{\bf T}_1, {\bf T}_2,\d {\bf T}_1,\d {\bf T}_2,...,\d^r{\bf T}_1,\d^r{\bf T}_2]])_{r \geq 0}$$
(where, for $i=1,2$,  $\d{\bf T}_i,\d^2{\bf T}_i,...$ are  $g$-tuples of new variables). Then the $r+1$-tuple of $g$-tuples
\begin{equation}
\label{aboveseries}
{\mathcal F},\d {\mathcal F},...,\d^r {\mathcal F}\end{equation}
defines a commutative formal group in $(r+1)g$ variables; we view these $(r+1)g$ variables  as $r+1$ $g$-tuples 
$${\bf T},\d {\bf T},...,\d^r {\bf T}$$ of variables. Setting ${\bf T}_1={\bf T}_2=0$ in the series \ref{aboveseries}, and forgetting about the first of them, we obtain $r$  $g$-tuples of series
$$F_1:=\{\d {\mathcal F}\}_{|{\bf T}_1={\bf T}_2=0},...,F_r:=\{\d^r {\mathcal F}\}_{|{\bf T}_1={\bf T}_2=0}.$$
The components of this $r$-tuple belong to $S^r[\d {\bf T},...,\d^r {\bf T}]^{\widehat{\ }}$ and define a group
\begin{equation}
\label{groo}
(\widehat{{\mathbb A}^{rg}_{S^r}},[+])\end{equation}
  in the category of $p$-adic formal schemes over $S^r$.
Now let $l({\bf T})=(l_k({\bf T}))_{k=1}^g$,
$$l_k(T)=\sum_{|\alpha|\geq 1} A_{\alpha k}{\bf T}^{\alpha} \in (S\otimes {\mathbb Q})[[T]]$$
be the logarithm of ${\mathcal F}$, where $\alpha=(\alpha_1,...,\alpha_g)\in \bZ_{\geq 0}^g$, $|\alpha|:=\sum\alpha_i$, ${\bf T}=\{T_1,...,T_g\}$, ${\bf T}^{\alpha}:=T_1^{\alpha_1}...T_g^{\alpha_g}$. Recall from 
\cite[p. 64]{Hazewinkel}, for all $\alpha$ we have:
\begin{equation}
\label{hazehaze}
|\alpha|A_{\alpha k} \in S.\end{equation}
  Define
\begin{equation}
\label{Lpr}
L^r=(L^r_k)_{k=1}^g,\ \ \ L^r_k:=\frac{1}{p}\{\phi^r(l_k({\bf T}))\}_{|{\bf T}=0}\in (S^r\otimes {\mathbb Q})[[\d T,...,\d^r T]].\end{equation}
Then $L^r_k$ actually belong to $S^r[\d T,...,\d^r T]^{\widehat{\ }}$ and define group homomorphisms
$$L^r:(\widehat{{\mathbb A}^{rg}_{S^r}},[+])\ra (\widehat{{\mathbb A}^g_{S^r}},+)=\widehat{{\mathbb G}^g_{a,S^r}}.$$
For all the facts above, in case $g=1$ we refer to \cite[pg. 123--125]{book}; the general case $g\geq 1$ is entirely similar.

Now let $X$ be a smooth affine $R$-scheme and set $S^r=\cO(J^r(X))$ and $$S^r_{\pi}:=\cO(J^r_{\pi}(X_{R_{\pi}})) \subset S^r \otimes_{R} R_{\pi},$$
 cf.  Proposition \ref{inj}.
We have the following result, which was proved in \cite[Prop. 2.19]{over} for $g=1$ under a restrictive hypothesis on $e$ that we here remove.

\begin{proposition}
\label{Lrp}
For $e=e(\pi)$ and $\nu:=[\frac{\log\ e}{\log\ p}]+2$ the series 
 $p^{\nu}L^r_k\otimes 1$ belong to the image of the natural homomorphism
$$S^r_{\pi}[\d_{\pi} {\bf T},...,\d_{\pi}^r {\bf T}]^{\widehat{\ }} \ra S^r[\d {\bf T},...,\d^r {\bf T}]^{\widehat{\ }}\otimes_{R} R_{\pi}.$$
\end{proposition}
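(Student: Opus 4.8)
The plan is to produce an explicit preimage of $p^\nu L^r_k\otimes 1$ by running the definition of $L^r_k$ inside the $\d_\pi$-prolongation sequence $S^*_\pi$ rather than inside $S^*$; the factor $p^\nu$ will then clear the denominators, as dictated by \ref{estimate1}--\ref{estimate2}. First I record the $\d_\pi$-shape of $\phi^r$ on the formal group variables. By \ref{universal} there are unique polynomials $P_{\pi,r}$ with coefficients in $R_\pi$ (indeed in $\bZ_p[\pi,\d_\pi\pi,\d_\pi^2\pi,\dots]$, by induction on $r$ using $\phi(\pi)\in\pi R_\pi$) with $\phi^r(b)=b^{p^r}+\pi P_{\pi,r}(b,\d_\pi b,\dots,\d_\pi^r b)$; this is a universal identity, so applying it to the formal group coordinate $T_j$ and setting $T_j=0$ gives $\phi^r(T_j)|_{{\bf T}=0}=\pi\,\widetilde Q_{r,j}$ with $\widetilde Q_{r,j}\in R_\pi[\d_\pi T_j,\dots,\d_\pi^r T_j]$ of zero constant term, the $\pi=p$ case reading $\phi^r(T_j)|_{{\bf T}=0}=p\,Q_{r,j}$ with $Q_{r,j}\in\bZ[\d T_j,\dots,\d^r T_j]$.

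Writing $l_k({\bf T})=\sum_\alpha A_{\alpha k}{\bf T}^\alpha$, applying the intrinsic Frobenius of $S^*_\pi$ and evaluating at ${\bf T}=0$ yields
\[L^r_k=\frac1p\{\phi^r(l_k)\}_{|{\bf T}=0}=\frac1p\sum_\alpha\phi^r(A_{\alpha k})\,\pi^{|\alpha|}\prod_{j=1}^g\widetilde Q_{r,j}^{\,\alpha_j},\]
now viewed as a series with coefficients in $S^r_\pi\otimes{\mathbb Q}$. The natural homomorphism of the Proposition carries this $S^*_\pi$-expression to $L^r_k\otimes1$: like the map \ref{ris} it comes from a universal property, hence commutes with $\phi$ and with the evaluation ${\bf T}=0$, and extends $\cO(X)\ra\cO(X_{R_\pi})$ on scalars; concretely $\widetilde Q_{r,j}\mapsto\frac p\pi Q_{r,j}\otimes1$, and the powers of $\frac p\pi$ recombine with $\pi^{|\alpha|}$ to give $p^{|\alpha|}$, reproducing $\frac1p\sum_\alpha\phi^r(A_{\alpha k})p^{|\alpha|}\prod_jQ_{r,j}^{\alpha_j}\otimes1=L^r_k\otimes1$.

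It remains to check that $p^\nu$ times the displayed series lies in the source ring. By \ref{hazehaze} we have $|\alpha|A_{\alpha k}\in\cO(X)$, so it has an image in $S^0_\pi$ and $\phi^r(|\alpha|A_{\alpha k})\in S^r_\pi$; thus
\[p^\nu L^r_k=\sum_\alpha\Big(p^{\nu-1}\frac{\pi^{|\alpha|}}{|\alpha|}\Big)\,\phi^r(|\alpha|A_{\alpha k})\prod_{j=1}^g\widetilde Q_{r,j}^{\,\alpha_j}.\]
By \ref{estimate1}, $v_p(\pi^{|\alpha|}/|\alpha|)\ge-\frac{\log e}{\log p}$, while $\nu-1=[\frac{\log e}{\log p}]+1>\frac{\log e}{\log p}$, so $p^{\nu-1}\pi^{|\alpha|}/|\alpha|\in R_\pi$ and every coefficient lies in $R_\pi\cdot S^r_\pi=S^r_\pi$. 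Since moreover $v_p(\pi^{|\alpha|}/|\alpha|)\ra\infty$ by \ref{estimate2}, while $\prod_j\widetilde Q_{r,j}^{\,\alpha_j}$ has order $\ge|\alpha|$ in the variables $\d_\pi^iT_j$, the series converges in $S^r_\pi[\d_\pi{\bf T},\dots,\d_\pi^r{\bf T}]^{\widehat{\ }}$; this element maps to $p^\nu L^r_k\otimes1$, which proves the Proposition.

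The step I expect to require the most care is the claim that the natural homomorphism takes the $S^*_\pi$-computation of $\frac1p\{\phi^r(l_k)\}_{|{\bf T}=0}$ to $L^r_k\otimes1$ on the nose: one must pin down the inclusion $\cO(J^r_\pi(X_{R_\pi}))\hookrightarrow\cO(J^r(X))\otimes_RR_\pi$ of \ref{ris} together with the induced identification of the formal group jet variables $\d_\pi^i{\bf T}$, and verify that $\phi$ and the substitution ${\bf T}=0$ are respected. Everything else is the bookkeeping around $|\alpha|A_{\alpha k}\in\cO(X)$ and the two inequalities \ref{estimate1}--\ref{estimate2}, which are exactly what force $\nu=[\frac{\log e}{\log p}]+2$.
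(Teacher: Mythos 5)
Your proof is correct and takes essentially the same route as the paper: your $\widetilde Q_{r,j}$ is exactly the paper's $G_{r\pi j}=\frac1\pi\{\phi^r(T_j)\}_{|{\bf T}=0}$, and your identity $p^\nu L^r_k=p^{\nu-1}\sum_\alpha\phi^r(|\alpha|A_{\alpha k})\frac{\pi^{|\alpha|}}{|\alpha|}\prod_j\widetilde Q_{r,j}^{\alpha_j}$ together with \ref{hazehaze}, \ref{estimate1}, \ref{estimate2} is precisely how the paper concludes. The extra bookkeeping you do to trace the natural homomorphism and to justify convergence simply makes explicit what the paper leaves implicit.
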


{\it Proof}.
Since $\phi^r(T_k) \equiv T_k^{p^r}$ mod $\pi$ in
$R_{\pi}[\d_{\pi} {\bf T},...,\d_{\pi}^r {\bf T}]$
we have $\{\phi^r(T_k)\}_{|T_k=0}\equiv 0$ mod $\pi$ in that ring.
Set $G_{r\pi k}=\frac{1}{\pi} \{\phi^r(T_k)\}_{|T_k=0}$, ${\bf G}_{r\pi}=(G_{r\pi k})_{k=1}^g$. Note that
$$p^{\nu}L_k^r=p^{\nu-1}\sum_{|\alpha| \geq 1} \phi^r(|\alpha|A_{\alpha k}) \frac{\pi^{|\alpha|}}{|\alpha |} {\bf G}_{r\pi}^{\alpha}$$
and we are done by \ref{estimate1}, \ref{estimate2}, and \ref{hazehaze}. \qed

\section{Applications to $\d$-modular forms} 

\subsection{Siegel $\d$-modular forms}\label{cinciunu}
 We recall some concepts from \cite{siegel, book} related to Siegel $\d$-modular forms
 and we introduce a construction that will be needed later.

 For any Noetherian
ring $S$ we let ${\bf M}_g(S)$ denote the set of all triples
$(A,\theta,\omega)$ where $A/S$ is an Abelian scheme
of relative dimension $g$, $\theta:A \ra
\check{A}$ is a principal polarization,
and $\omega=(\omega_1,...,\omega_g)^\top$  is a column vector
whose entries are a basis of the $S-$module of $1-$forms
$H^0(A,\Omega^1_{A/S})$. Here and later the superscript $\top$ denotes the transpose
of a matrix.
Recall that we denoted by ${\bf Prol}_p$  the category of $\d$-prolongation sequences $S^*=(S^n)$ such that
$S^n$ are Noetherian, $p$-adically complete and flat over $R$.
By a {\it Siegel $\d-$modular function}
 of genus $g$, size $g$, and order $r$
we understand a rule, call it $f$, that associates to any
 prolongation sequence  $S^* \in {\bf Prol}_p$
and to any triple $(A,\theta,\omega) \in {\bf M}_g(S^0)$
a $g \times g$ matrix
$$f(A,\theta,\omega,S^*) \in \text{Mat}_g(S^r),$$
  depending on $S^*$ and on the isomorphism class
of $(A,\theta,\omega)$ only, such that the formation of  $f(A,\theta,\omega,S^*)$  is functorial
in $S^*$ in the sense that if $\eta:S^* \ra \tilde{S}^*$
is a morphism of  prolongation sequences in ${\bf Prol}_p$
and $\eta^*$ denotes ``pull back via $\eta$" then
$$f(\pi^*A, \eta^*\theta,\eta^*\omega,\tilde{S}^*)=
\eta(f(A,\theta,\omega,S^*)).$$

In \cite[Sec. 1.3]{siegel} arbitrary sizes (rather than size $g$) were considered.
For now we discuss size $g$ forms; we will later discuss the case of size $1$.

By a {\it basic weight} we mean a pair $(\phi^a,\phi^b)\in \bZ[\phi]\times \bZ[\phi]$, where 
 $a,b$ are two non-negative integers. Each $\phi^a$ and $\phi^b$ may be viewed as $\delta$-homomorphism $\textrm{GL}_g(S^0) \to \textrm{GL}_g(S^a)$ and $\textrm{GL}_g(S^0) \to \textrm{GL}_g(S^b)$ respectively by iterating the ring homomorphism $\phi \colon S^i \to S^{i+1}$ where $\phi(x) = x^p + p \delta x$. 
A  Siegel $\d-$modular function $f$ as above is called
a {\it Siegel $\d-$modular form} of   basic weight $(\phi^a,\phi^b)$
if for all $(A,\theta,\omega) \in {\bf M}_g(S^0)$
and $\lambda \in GL_g(S^0)$ we have
$$ f(A,\theta,\lambda \omega,S^*)= \lambda^{\phi^a} \cdot
f(A,\theta,\omega,S^*) \cdot (\lambda^\top)^{\phi^b}$$ where the interpretation of $\lambda^{\phi^a}$ is to apply the associated $\delta$-homomorphism to $\lambda$ and similarly for $(\lambda^\top)^{\phi^b}$. 

\noindent The collection of all Siegel $\d-$modular forms of genus $g$, size $g$, 
order $r$ and weight $(\phi^a,\phi^b)$ will be denoted by 
$M^r_g(\phi^a,\phi^b)$. In \cite[Sec. 1.3]{siegel} more general weights were considered.
We will come back to such weights later.

 Let 
$$(A_1,\theta_1,\omega_1),  (A_2,\theta_2,\omega_2)
\in {\bf M}_g(S)$$
 and let $u:A_1 \ra A_2$ be an isogeny
(which is not assumed to be compatible
with the forms or the polarizations).
We let 
$$u^\top:=\theta_1^{-1} \circ \check{u} \circ 
\theta_2:
A_2 \ra \check{A}_2 \ra \check{A}_1
 \ra A_1$$
denote its ``transpose'' and we let $[u]$ be the unique 
$g \times g$ matrix with $S-$coefficients such that $u^* \omega_2=[u]
\cdot \omega_1$. Let 
$d(u):=det([u \circ u^\top])$. We have $d(u)=\pm \text{deg}(u)$.
If $g=1$ we have $d(u)=deg(u)$. 
   A Siegel $\d-$modular form $f\in M^r_g(\phi^a,\phi^b)$ 
     will be called {\it isogeny
   covariant}  if 
for any
 prolongation sequence
 $S^* \in {\bf Prol}_p$,
  any $(A_1,\theta_1,\omega_1),  (A_2,\theta_2,\omega_2)
\in {\bf M}_g(S^0)$, and any  isogeny $u:A_1 \ra A_2$,
of degree prime to $p$, 
such that $[u]$ is the identity (i.e. $u^* \omega_2=\omega_1$)
the following holds:
\[f(A_2,\theta_2,\omega_2,S^*)= 
f(A_1, \theta_1, \omega_1,S^*) \cdot
(([u^\top]^\top)^{-1})^{\phi^b}.\]

\bigskip

\noindent We denote by $I^r_g(\phi^a,\phi^b)$ the space
of all isogeny covariant Siegel $\d-$modular
forms in $M^n_g(\phi^a,\phi^b)$.  Recall the following:

\begin{theorem} \label{oneone}
\cite[Thm. 1.7]{siegel}
The $R$-module $I^r_g(\phi^a,\phi^b)$ has rank one if 
$r\geq a$, $r\geq b$, $a\neq b$,
and has rank zero in all other cases.
\end{theorem}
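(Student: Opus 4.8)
\medskip

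\noindent\textbf{Proof strategy.} The plan is to separate the claim into three parts: (i) under the hypotheses $r\geq a$, $r\geq b$, $a\neq b$, construct a single nonzero form $f^{a,b}\in I^r_g(\phi^a,\phi^b)$; (ii) in the same range, show every element of $I^r_g(\phi^a,\phi^b)$ is an $R$-multiple of $f^{a,b}$; (iii) show $I^r_g(\phi^a,\phi^b)=0$ whenever $r<a$, or $r<b$, or $a=b$. The part of (iii) with $r<a$ or $r<b$ is essentially formal: the structure maps $\varphi\colon S^i\ra S^{i+1}$ only raise the jet order, so there is no $R$-algebra map $S^a\ra S^r$ through which $\lambda\mapsto\lambda^{\phi^a}$ could act on $\mathrm{Mat}_g(S^r)$, and the defining transformation law then admits only $f=0$.

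For (i) I would work with the universal abelian scheme $A\ra X$ of Section \ref{cinciunu}, its de Rham bundle $H:=H^1_{\mathrm{dR}}(A/X)$ of rank $2g$, the Hodge filtration $0\ra\mathcal{E}\ra H\ra\mathcal{E}^{\vee}\ra 0$ (using the principal polarization to identify $H/\mathcal{E}$ with $\mathcal{E}^{\vee}$), the Gauss--Manin connection, and the alternating pairing on $H$ coming from $\theta$. Over the jet space $J^r(X)$ the $\d$-structure supplies a horizontal crystalline Frobenius $\Phi$ on the pullback of $H$, with the jet coordinates playing the role of a ``$\d$-Frobenius lift''. Composing a suitable iterate of $\Phi$ with the Hodge inclusion and projection yields a canonical $\cO(J^r(X))$-linear map between the $\phi^b$- and $\phi^a$-twists of $\mathcal{E}$, and reading it off in the tautological basis $\omega$ produces a matrix $f^{a,b}(A,\theta,\omega,S^*)\in\mathrm{Mat}_g(S^r)$; unwinding definitions shows it has weight $(\phi^a,\phi^b)$ and, because a prime-to-$p$ isogeny acts invertibly on and compatibly with the crystalline realization, that it is isogeny covariant. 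For nonvanishing one restricts to an ordinary point: the unit-root splitting of $H$ makes $\Phi$ triangular with explicit entries, and the $\d$-Serre--Tate expansion of $f^{a,b}$ is visibly nonzero precisely because $a\neq b$ (the diagonal weight $a=b$ would be killed by the pairing, see below).

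For (ii), given $f\in I^r_g(\phi^a,\phi^b)$, form $h:=f\cdot(f^{a,b})^{-1}$ on the locus where $\det f^{a,b}$ is invertible (or keep $f\cdot\mathrm{adj}(f^{a,b})$ and $\det f^{a,b}$ separately to avoid inverting). The weight laws force $h$ to be conjugation-equivariant under $\lambda\mapsto\lambda^{\phi^a}$ for every frame $\lambda$, and isogeny covariance of $f$ and of $f^{a,b}$ forces $h$ to be invariant under every prime-to-$p$ isogeny; functoriality in $S^*$ (taking $S^0$ sufficiently generic) and a Schur-type argument then give $h=c\cdot I$ with $c$ an isogeny-invariant $\d$-modular function of weight $0$, i.e.\ an order-$r$ $\d$-function on $X$. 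Finally $c\in R$: passing to the $\d$-Serre--Tate expansion at an ordinary point, isogeny invariance becomes invariance of a restricted power series under the substitutions induced by prime-to-$p$ isogenies (for $g=1$, $q\mapsto q^{\ell}$ for all $\ell$ prime to $p$), which is possible only for constants; together with the connectedness of $X$ mod $p$ and the analytic-continuation mechanism behind Proposition \ref{anana} this yields $c\in R$ and hence $f=c\,f^{a,b}$.

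For the remaining case of (iii), $a=b$ with $r\geq a$, the principal polarization is what forces vanishing: the pairing on $H$ is alternating and is preserved by $\Phi$ up to the polarization multiplier, so the linear-algebra datum underlying an isogeny covariant form of weight $(\phi^a,\phi^a)$ must be simultaneously symmetric (by the equal-weight transformation law, which makes $\omega$ enter the two slots symmetrically) and antisymmetric (by the alternating pairing, which enters through $\theta$ and the transpose isogeny $u^{\top}$), hence zero. The two genuine obstacles I anticipate are this antisymmetry argument and the constancy step in (ii) --- identifying the isogeny-invariant weight-$0$ $\d$-modular functions with $R$; the construction of $f^{a,b}$ and the verification of its weight and covariance, while computational, is routine de Rham linear algebra over the jet space.
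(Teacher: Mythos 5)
This theorem is not proved in the paper: it is quoted verbatim with the citation \cite[Thm.\ 1.7]{siegel} and used as a black box. Indeed Section~\ref{cinciunu} explicitly contrasts the Cech-cohomological construction of $f^r$ given here with the crystalline construction of \cite[Sec.\ 4.1]{siegel}, and invokes Theorem~\ref{oneone} precisely to conclude that the two constructions agree up to a scalar. So there is no in-paper argument to compare against; your proposal is an independent reconstruction of the cited result. That said, your existence step (i) is aimed at the right target: the crystalline-Frobenius-on-jet-spaces construction you sketch is essentially the one in \cite{siegel}, and the nonvanishing is indeed checked there via a unit-root computation at an ordinary point.

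Two of your remaining steps have genuine gaps. For the $a=b$ vanishing, the symmetric/antisymmetric dichotomy does not follow from the data you cite: the equal-weight law $f(\lambda\omega)=\lambda^{\phi^a}f(\omega)(\lambda^{\top})^{\phi^a}$ is a conjugation-type covariance, not a symmetry constraint, and the isogeny law only constrains $f$ along the very special matrices $([u^{\top}]^{\top})^{-1}$ for prime-to-$p$ isogenies $u$; you would need to show the alternating pairing enters the cocycle in a way that actually produces $f+f^{\top}=0$, and then deduce $f=0$ (which for $g>1$ does not follow from antisymmetry alone). For step (ii), the Schur-type argument is the heart of the rank-one claim and is presently only a slogan: $h=f\cdot(f^{a,b})^{-1}$ is defined only where $\det f^{a,b}$ is invertible; the conclusion $h=cI$ requires knowing that conjugation-equivariance plus isogeny-invariance is an irreducible constraint; and the final descent $c\in R$ needs the precise stock of isogenies acting on the jet variables $(q,\d q,\dots,\d^r q)$, not merely on $q$, since a nonconstant function of $\d q$ alone could otherwise slip through. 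These are exactly the technical points the present paper delegates to \cite{siegel}, and the proposal as written does not close them.
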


 For $r\geq 1$ let $f^r$ be a basis of the rank one
$R$-module   $I^r_g(\phi^r,\phi^0)$.
 For any non-negative integers $a,b$ with $a>b$ set
   \begin{equation}
   \label{formsab}
   f^{ab}:=\phi^b f^{a-b},\ \ \ f^{ba}:=(f^{ab})^\top.\end{equation}
   So
we have $0\neq f^{ab} \in I^c_{g}(\phi^a,\phi^b)$ whenever $a \neq b$
and $c=\text{max}\{a,b\}$. Hence $f^{ab}$ is a basis of $I^r_{g}(\phi^a,\phi^b)\otimes K$ whenever $a \neq b$ and $r\geq c$.

For the  purpose of our proofs later  we need to explicitly construct a basis $f^r$ for $I^r_g(\phi^r,\phi^0)$.
The construction that follows is a generalization for arbitrary $g\geq 1$  of a basic construction in \cite[Sec. 4, Cons. 4.1]{difmod} for $g=1$ and is different from the (crystalline) construction in \cite[Sec. 4.1]{siegel}. By Theorem \ref{oneone}, the two constructions yield (up to a multiplicative constant in $R$) the same form.
  
  To construct $f^r$ start with
  any $S^*\in {\bf Prol}_p$ and 
  let $(A,\omega,\theta)\in {\bf M}(S^0)$.    Cover $A$
by affine open sets $U_i$. Then the natural projections 
$$J^r(U_i/S^*) \ra \widehat{U_i} \otimes_{S^0} S^r$$
 possess sections
$$s_i:\widehat{U_i}\otimes_{S^0} S^r \ra J^r(U_i/S^*).$$
Let 
$$N^r:=Ker(J^r(A/S^*) \ra \widehat{A}\otimes_{S^0} S^r);$$
 it is a group object in the category of $p$-adic formal schemes over $S^r$. Then the differences $s_i-s_j$ define
morphisms
$$s_i-s_j: \widehat{U_{ij}} \otimes_{S^0} S^r \ra N^r$$
where the difference is taken in the group law of $J^r(A/S^*)/S^r$.
On the other hand $N^r$ identifies with the group $(\widehat{{\mathbb A}^{rg}_{S^r}},[+])$ in
(\ref{groo})
with coordinates given by  $\d {\bf T},...,\d^r {\bf T}$, where  ${\bf T}$ is a $g$-tuple of formal parameters at the origin of $A/S$. Let $L^r$ be the tuple of series in (\ref{Lpr}) attached to the formal group of $A$
with respect to  ${\bf T}$, viewed as a homomorphism $L^r:N^r=(\widehat{{\mathbb A}^{rg}_{S^r}},[+])\ra \widehat{{\mathbb G}^g_{a,S^r}}$.
The compositions
\begin{equation}
\label{discri}
L^r \circ(s_i-s_j):\widehat{U_{ij}} \otimes_{S^0} S^r \ra \widehat{{\mathbb G}^g_{a,S^r}}\end{equation}
define a Cech cocycle of elements
\begin{equation}
\label{vv}
\varphi^r_{ij}\in \cO(\widehat{U_{ij}} \otimes_{S^0} S^r)^g\end{equation}
 and hence a column vector 
 $$\varphi^r=(\varphi^{r,1},...,\varphi^{r,g})^\top$$ of cohomology classes in
$$H^1(\widehat{A}\otimes_{S^0} S^r,\cO)\simeq H^1(A\otimes_{S^0} S^r,\cO).$$
   Now $\theta$ induces canonically an identification  of Lie algebras 
$$H^0(A,T_{A/S^0})\simeq H^1(A,\cO)$$
 of $A$ and $\check{A}$ respectively and hence we have  an induced pairing
$$\langle\ \ ,\ \ \rangle_{\theta}:H^1(A,\cO)\times H^0(A,\Omega)\ra S^0.$$
We denote by the same symbol, $\langle \ ,\  \rangle_{\theta}$, the  extensions of this pairing to the pull backs of the above spaces to $S^r$.
Consider the basis
$$\omega=(\omega_1,...,\omega_g)^\top.$$
 Assume that one can choose  \'{e}tale coordinates over $S^0$, 
$${\bf T}:U\ra {\mathbb A}^g_{S^0},$$
 on an open set $U$ of $A$, containing the zero section, such that 
\begin{equation}
\label{ceai}
\omega \equiv d{\bf T} \bmod {\bf T}. 
\end{equation}
This can be  done locally on $\text{Spec}\ S^0$ in the Zariski topology.
Let us take ${\bf T}$ as formal parameters for $A/S^0$ at the origin.
Then one can consider the $g\times g$ matrix
\begin{equation}
\label{nono}
f^r(A,\theta,\omega,S^*):=\langle \varphi^r,\omega^\top\rangle_{\theta}\in \text{Mat}_g(S^r)\end{equation}
with entries
$$f^r(A,\theta,\omega,S^*)_{\alpha \beta}:=\langle\varphi^{r,\alpha},\omega_{\beta}\rangle_{\theta}\in S^r,\ \ \ \alpha,\beta=1,...,g.$$
This matrix does not depend on the choice of ${\bf T}$ as long as \ref{ceai} holds.
In general, when  \'{e}tale coordinates as above do not necessarily exist,  we make the construction locally in the Zariski topology and glue to get a matrix \ref{nono}.
It is easy to check that the association
$$(A,\theta,\omega,S^*)\mapsto f^r(A,\theta,\omega,S^*)$$
defines an element  
\begin{equation}
\label{titi}
f^r\in I^r_g(\phi^r,\phi^0);\end{equation}
 for a similar verification (for a different form) see
\cite[Sec. 4.1]{siegel}.

\medskip

 We next investigate overconvergence of the above construction. We consider the following two special cases which we treat simultaneously:
 
\begin{lemma}\label{froc} Fix $\pi \in \Pi$. In each of the following cases
\begin{enumerate}
\item[Case 1]  $X$ is a smooth affine $R$-scheme with \'{e}tale coordinates $y:X\ra {\mathbb A}^d_R$.
\item[Case 2] $S^r_{\text{for}}:=R[[t]][\d t,...,\d^r t]^{\widehat{\ }}$ with $t$ a $d$-tupe of variables and 
$$S^r_{\pi,\text{for}}:=R[[t]][\d_{\pi} t,...,\d_{\pi}^r t]^{\widehat{\ }}.$$

\end{enumerate}
the function $f^r$ is $\d_{\pi}$-overconvergent. In particular, in Case 1, $f^r$ is totally $\d$-overconvergent of polar order bounded by $\frac{ \log x }{ \log p} + 2$. 
\end{lemma}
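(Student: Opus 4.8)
The plan is to reduce the statement to Proposition \ref{Lrp} together with the interpretation of $\d_\pi$-overconvergence given in Remark \ref{bibi}, assertion 1. Recall that $f^r$ is built, in affine coordinates, out of the Cech cocycle $\varphi^r_{ij}$ obtained by composing the homomorphisms $L^r$ of (\ref{Lpr}) with the differences $s_i-s_j$ of local sections, and then pairing the resulting cohomology classes against $\omega$ via $\langle\ ,\ \rangle_\theta$. The key point is that the only ``denominators'' (powers of $p$, equivalently powers of $\pi$ after base change) entering this construction come from the series $L^r_k$; the local sections $s_i$, the group law $[+]$ on $N^r\cong(\widehat{{\mathbb A}^{rg}_{S^r}},[+])$, and the pairing $\langle\ ,\ \rangle_\theta$ are all defined over the prolongation sequence $S^r=\cO(J^r(X))$ (respectively $S^r_{\text{for}}$) without inverting $p$.

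First I would set up, in each of the two cases, the $\d_\pi$-prolongation sequence $S^r_\pi$ (namely $\cO(J^r_\pi(X_{R_\pi}))$ in Case 1, $S^r_{\pi,\text{for}}$ in Case 2), viewed as a subring of $S^r\otimes_R R_\pi$ via the injection (\ref{ris}) (this injectivity is Proposition \ref{inj}(1); in Case 2 it is the evident inclusion $R_\pi[[t]][\d_\pi t,\dots]^{\widehat{\ }}\hookrightarrow R_\pi[[t]][\d t,\dots]^{\widehat{\ }}$ after identifying $\d_\pi^i t$ and $\d^i t$ through $\phi^i$). The local sections $s_i$ and hence the differences $s_i-s_j:\widehat{U_{ij}}\otimes S^r\to N^r$ are defined over $S^r$; moreover the $\pi$-jet space $J^r_\pi$ admits analogous sections, compatible with these under base change, so the coordinates of $s_i-s_j$ in the parameters $\d^k{\bf T}$ correspond to coordinates in $\d_\pi^k{\bf T}$ lying in $S^r_\pi$. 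Next, by Proposition \ref{Lrp}, for $\nu=[\frac{\log e}{\log p}]+2$ with $e=e(\pi)$, the series $p^\nu L^r_k\otimes 1$ lie in the image of $S^r_\pi[\d_\pi{\bf T},\dots,\d_\pi^r{\bf T}]^{\widehat{\ }}\to S^r[\d{\bf T},\dots,\d^r{\bf T}]^{\widehat{\ }}\otimes_R R_\pi$. Composing the $S^r_\pi$-defined homomorphism $s_i-s_j$ with $p^\nu L^r$ therefore produces a cocycle $p^\nu\varphi^r_{ij}$ whose entries lie in $\cO(\widehat{U_{ij}}\otimes S^r_\pi)^g$. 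Finally, since $\theta$ and the induced pairing $\langle\ ,\ \rangle_\theta:H^1(A,\cO)\times H^0(A,\Omega)\to S^0$ are defined over $S^0$ (equivalently $R$, resp.\ $R[[t]]$) and the coordinates ${\bf T}$ satisfying (\ref{ceai}) can be taken over $S^0$, pairing $p^\nu\varphi^r$ against $\omega^\top$ yields $p^\nu f^r(A,\theta,\omega,S^*)\in\text{Mat}_g(S^r_\pi)$. By Remark \ref{bibi}, assertion 1, this says exactly that $f^r$ is $\d_\pi$-overconvergent with polar order at most $\nu=[\frac{\log e}{\log p}]+2\leq\frac{\log e}{\log p}+2$. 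Letting $\pi$ vary over $\Pi$ gives, in Case 1, total $\d$-overconvergence with polar order bounded by $\frac{\log x}{\log p}+2$.

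The main obstacle I anticipate is bookkeeping rather than conceptual: one must check carefully that the local sections $s_i$ of $J^r(A/S^*)\to\widehat{A}\otimes S^r$ and the corresponding sections for the $\pi$-jet spaces can be chosen compatibly under the base-change map (\ref{tra})/(\ref{ris}), so that the \v{C}ech cocycle really does descend to $S^r_\pi$ after multiplication by $p^\nu$ — i.e.\ that no further denominators are hidden in the section $s_i$ or in the group law $[+]$. This is where the smoothness of $X$ (to produce the sections) and the flatness/Noetherianity built into ${\bf Prol}_p$ are used. Once the sections are pinned down, everything else is a direct concatenation of maps that are either $S^r_\pi$-integral by construction or made so by Proposition \ref{Lrp}. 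In Case 2 the same argument applies verbatim with $X$ replaced by $\text{Spf}\,R[[t]]$, noting that $A$ there is the formal group itself so the covering is trivial and the ``cocycle'' computation degenerates to a single series; this is why the two cases are handled simultaneously.
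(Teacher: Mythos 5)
Your proposal is correct and follows essentially the same route as the paper: reduce to Proposition \ref{Lrp}, then check that the local sections of the jet-space projections (and hence the \v{C}ech cocycle) can be made $S^r_\pi$-integral compatibly under base change. The ``bookkeeping'' you flag as the main obstacle is exactly what the paper resolves, via the commutative diagrams (\ref{diaa}) and (\ref{diagg}): both the $\d$- and $\d_\pi$-sections are chosen by the same recipe (sending $\d^k{\bf T}_i\mapsto 0$, respectively $\d_\pi^k{\bf T}_i\mapsto 0$, as in (\ref{luli})), which makes the compatibility immediate and confirms your expectation that no further denominators appear.
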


\begin{proof}

We handle both cases simultaneously as the proofs are similar. In what follows, set $z=y$, $S^r:=\cO^r(X)$, 
$S^r_{\pi}:=\cO^r_{\pi}(X)$, in Case 1 and set $z=t$, $S^r=S^r_{\text{for}}$, $S^r_{\pi}=S^r_{\pi,\text{for}}$, in Case 2.

Assume $A$ has \'{e}tale coordinates over $S^0$ on an open set $U$ containing the zero section,  ${\bf T}:U\ra {\mathbb A}_{S^0}^g$, satisfying \ref{ceai}, and  each $U_i$ possesses  \'{e}tale coordinates  ${\bf T}_i:U_i \ra  {\mathbb A}^g_{S^0}$ over $S^0$. (We do not need to assume that $U=U_{i_0}$  and ${\bf T}={\bf T}_{i_0}$ for some $i_0$.) Set $e:=e(\pi)$ and  $\nu=[\frac{\log\ e}{\log\ p}]+2$.
Note that
we have a natural morphism
\begin{equation}
\label{messs}
\widehat{U}_i \widehat{\otimes}_{S^0} S^r \otimes_{R} R_{\pi} \simeq (\widehat{U}_i \otimes_{R} R_{\pi}) \otimes_{S_{\pi}^0} (S^r \otimes_{R} R_{\pi})\ra (\widehat{U}_i \otimes_{R} R_{\pi}) \widehat{\otimes}_{S_{\pi}^0}S_{\pi}^r.\end{equation}
Set $\widehat{U}_{i,R_{\pi}}=\widehat{U}_i \otimes_{R} R_{\pi}$.
We claim that one can find sections $s_i$ and $s_{i,\pi}$ of the canonical projections making the following diagram commute:

\begin{equation}
\label{diaa}
\begin{tikzpicture}[baseline=(current bounding box.center)]

\node (LT) at (-3,1) {$\widehat{U}_i \widehat{\otimes}_{S^0} S^r \otimes_{R} R_{\pi}$};
\node (RT) at (3,1){$J^r(U_i/S^*) \otimes_{R} R_{\pi}$};
\node (LB) at (-3,-1){$\widehat{U}_{i,R_{\pi}} \widehat{\otimes}_{S_{\pi}^0}S_{\pi}^r$};
\node (RB) at (3,-1){$J^r_{\pi}(U_{i,R_{\pi}}/S^*_{\pi})$};

\draw[->] (LT) edge node[above]{$s_i$} (RT);
\draw[->] (LT) edge (LB);
\draw[->] (RT) edge (RB);
\draw[->] (LB) edge node[above]{$s_{i,\pi}$} (RB);

\end{tikzpicture}
\end{equation}

Indeed, consider  the commutative diagram

\begin{equation}
\label{diagg}
\begin{tikzpicture}[baseline=(current bounding box.center)]

\node (LT) at (-3,1) {$\cO(\widehat{U}_{i,R_{\pi}})[\d z,...,\d^r z]^{\widehat{\ }} $};
\node (RT) at (3,1){$\cO(\widehat{U}_{i,R_{\pi}})[\d z,...,\d^r z,\d {\bf T}_i,...,\d^r {\bf T}_i]^{\widehat{\ }}$};
\node (LB) at (-3,-1){$\cO(\widehat{U}_{i,R_{\pi}})[\d_{\pi} z,...,\d_{\pi}^r z]^{\widehat{\ }}$};
\node (RB) at (3,-1){$\cO(\widehat{U}_{i,R_{\pi}})[\d_{\pi} z,...,\d_{\pi}^r z,\d_{\pi} {\bf T}_i,...,\d_{\pi}^r {\bf T}_i]^{\widehat{\ }} $};

\draw[->] (RT) edge (LT);
\draw[->] (LB) edge (LT);
\draw[->] (RB) edge (RT);
\draw[->] (RB) edge (LB);

\end{tikzpicture}
\end{equation}

with horizontal arrows sending 

\begin{equation}
\label{luli}
\d {\bf T}_i,...,\d^r {\bf T}_i\mapsto 0,\ \ \ \d_{\pi} {\bf T}_i,...,\d_{\pi}^r {\bf T}_i\mapsto 0.
\end{equation}
 Then the spaces in the diagram
(\ref{diaa}) are the formal spectra of the rings in the diagram (\ref{diagg}) and we can take the horizontal arrows in the diagram
(\ref{diaa}) to be induced by the horizontal arrows in the diagram (\ref{diagg}). The diagram (\ref{diaa}) plus Proposition
\ref{Lrp}
then induces a commutative
diagram

\begin{equation}
\label{striga}
\begin{tikzpicture}[baseline=(current bounding box.center)]

\node (LT) at (0,1) {$\widehat{U}_{ij} \widehat{\otimes}_{S^0} S^r \otimes_{R} R_{\pi} $};
\node (RT) at (3,1){$N^r\otimes_{R} R_{\pi}$};
\node (RTT) at (6,1){$\widehat{\mathbb G}^g_{a,S^r\otimes_{R} R_{\pi}}$};
\node (LB) at (0,-1){$\widehat{U}_{ij,R_{\pi}} \widehat{\otimes}_{S^0_{\pi}} S_{\pi}^r $};
\node (RB) at (3,-1){$N_{\pi}^r$};
\node (RBB) at (6,-1){$\widehat{\mathbb G}^g_{a,S_{\pi}^r}$};

\draw[->] (LT) edge node[above]{$s_i - s_j$} (RT);
\draw[->] (LT) edge (LB);
\draw[->] (RT) edge (RB);
\draw[->] (RTT) edge (RBB);
\draw[->] (LB) edge node[below]{$s_{i,\pi} - s_{j,\pi}$}(RB);
\draw[->] (RT) edge node[above]{$p^\nu L^r$} (RTT);
\draw[->] (RB) edge node[above]{$L_{\pi,\nu}^r$} (RBB);

\end{tikzpicture}
\end{equation}

where $N_{\pi}^r$ is the kernel of the canonical projection 
$$J^r_{\pi}(A_{R_{\pi}}/S^*_{\pi})\ra \widehat{A}_{R_{\pi}} \widehat{\otimes}_{S_{\pi}^0}S_{\pi}^r$$ and the vertical morphisms are the canonical ones.
The diagram (\ref{striga}) shows that the cocycle $p^{\nu}\varphi^r_{ij}$ in (\ref{vv}) comes from a cocycle of elements
in $\cO(\widehat{U}_{ij,R_{\pi}} \widehat{\otimes}_{S_{\pi}^0}S_{\pi}^r)$. This immediately implies that
\begin{equation}
\label{hotz}
p^{\nu}f^r(A,\theta,\omega,S^*)_{\alpha \beta}\otimes 1=p^{\nu} \langle \varphi^{r,\alpha}, \omega_{\beta} \rangle_{\theta} \otimes 1\in \text{Im}(S^r_{\pi}\ra S^r \otimes_{R} R_{\pi}).
\end{equation}

\end{proof}

Denote by 
$f^r_{\pi,\nu}(A,\theta,\omega,S^*)_{\alpha \beta}\in S^r_{\pi}$ the element whose image is
 $p^{\nu}f^r(A,\theta,\omega,S^*)_{\alpha \beta}$.
Case 1 and Case 2 of the above construction are compatible in the following sense.
Let $X$ be as in Case 1, choose a $k$-point $\overline{P}\in X(k)$, and  choose an isomorphism between $R[[z]]$
and the completion of $X$ along $\overline{P}$; we do not assume any compatibility between $y$ and $t$. 
Now consider any $R_{\pi}$-point $P:\text{Spec}\ R_{\pi}\ra X$ reducing to $\overline{P}$ mod $\pi$ and let 
$$\cO(X)\subset R[[t]]\ra R_{\pi},\ \ \ z\mapsto t(P)\in \pi R_{\pi}$$
be the corresponding homomorphism.
Consider a triple ${\mathcal A}:=(A,\theta,\omega)\in {\bf M}(\cO(X))$ and  let 
$${\mathcal A}_{R[[t]]}:=(A_{R[[t]]},\theta_{R[[t]]},\omega_{R[[t]]})\in {\bf M}(R[[t]]),\ \ \ \ {\mathcal A}_P:=(A_P,\theta_P,\omega_P)\in {\bf M}(R_{\pi})$$
 be the base changed triples.
We claim that the diagram 

\begin{center}
\begin{tikzpicture}

\node (LT) at (0,1) {$\cO^r(X)\otimes_R R_{\pi} $};
\node (RT) at (3,1){$S^r_{\text{for}}\otimes_R R_{\pi}$};
\node (RTT) at (6,1){$R_{\pi}$};
\node (LB) at (0,-1){$\cO^r_{\pi}(X)$};
\node (RB) at (3,-1){$S^r_{\pi,\text{for}}$};
\node (RBB) at (6,-1){$R_{\pi}$};

\draw[->] (LT) edge (RT);
\draw[->] (LB) edge (LT);
\draw[->] (RB) edge (RT);
\draw[->] (RBB)edge (RTT);
\draw[->] (LB) edge (RB);
\draw[->] (RT) edge (RTT);
\draw[->] (RB) edge (RBB);

\end{tikzpicture}
\end{center}

induces a diagram

\begin{equation}
\label{lula}
\begin{tikzpicture}[baseline=(current bounding box.center)]

\node (LT) at (0,1) {$ p^{\nu} f^r({\mathcal A},\cO^*(X))\otimes 1$};
\node (RT) at (4,1){$p^{\nu}f^r({\mathcal A}_{R[[t]]},S^*_{\text{for}})\otimes 1$};
\node (RTT) at (8,1){$p^{\nu} f^r({\mathcal A}_P,R_{\pi}^*) $};
\node (LB) at (0,-1){$f^r_{\pi,\nu}({\mathcal A},\cO^*(X))$};
\node (RB) at (4,-1){$f^r_{\pi,\nu}({\mathcal A}_{R[[t]]},S^*_{\text{for}})$};
\node (RBB) at (8,-1){$ f^r_{\pi,\nu}({\mathcal A}_P,R_{\pi}^*)$};

\draw[->] (LT) edge (RT);
\draw[->] (LB) edge (LT);
\draw[->] (RB) edge (RT);
\draw[->] (RBB)edge (RTT);
\draw[->] (LB) edge (RB);
\draw[->] (RT) edge (RTT);
\draw[->] (RB) edge (RBB);

\end{tikzpicture}
\end{equation}

Indeed, this compatibility holds because the horizontal maps in \ref{diagg} can be chosen to be given, in all cases, by the same recipe \ref{luli}; so the fact that $t$ and $y$ are not assumed to be compatible is irrelevant. 

There is a {\it size one} version of theory which we now briefly recall.
Let $w=\sum_{i=1}^r c_i \phi^i\in \bZ[\phi]$; set $\text{deg}(w)=\sum c_i$ and write $w\geq 0$ if $c_i\geq 0$ for all $i$. By an {\it Siegel $\d$-modular form} of order $r$, size $1$, and weight $w$, we understand a rule $f$ that attaches to any 
$S^*\in {\bf Prol}_p$ and any triple $(A,\theta,\omega)\in {\bf M}(S^0)$ an element 
$f(A,\theta,\omega)\in S^r$, depending only on the isomorphism class of the triple, behaving  functorially in $S^*$,  and such that for any $\lambda\in GL_g(S^0)$ we have
$$f(A,\theta,\lambda \omega,S^*)=\text{det}(\lambda)^{-w}f(A,\theta,\omega,S^*).$$
 By an {\it ordinary Siegel $\d$-modular form} of order $r$, size $1$, and weight $w$, we understand a rule as above which is only defined for triples $(A,\theta,\omega)$ with $A/S^0$ having ordinary reduction mod $p$. So $\d$-modular forms as above define ordinary $\d$-modular forms but not vice versa. Denote by $M_{g,1}^r(w)$ the space of  forms as above and by $M^r_{g,1,\text{ord}}(w)$ the space of ordinary forms.

\subsection{Siegel $\d$-modular forms on moduli spaces}\label{cincidoi}
  Consider an abelian scheme $A\ra X$ of relative dimension $g\geq 1$ over a smooth  $R$-scheme $X$. Consider the 
  locally free sheaf  ${\mathcal E}$ on $X$ defined as the direct image of the relative cotangent bundle $\Omega_{A/X}$ and let $B\ra X$
  be the principal $GL_g$-bundle associated to ${\mathcal E}$ (which we refer to as being {\it attached to $A$}); recall that
  \begin{equation}
  B=\text{Spec}(\text{Sym}(\check{\mathcal E}^{\oplus g}))\backslash Z.
  \end{equation}
  where $\check{\ }$ denotes the dual and $Z$ is the obvious ``determinant hypersurface".  To give an  $R$-point of $B$ is the same as to give an $R$-point $P$ of $X$ together with a basis $\omega_P$ for the $1$-forms  on the abelian scheme $A_P$ over $R$ corresponding to $P$.
   By a {\it Siegel $\d$-modular form on $X$ of size $g$, order $r$,  and weight $(\phi^a,\phi^b)$}
   we will mean an element $f\in \cO^r(B)$ such that for any affine open set $U\subset X$ and basis $\omega$ of ${\mathcal E}$ on $U$ 
   we have
    $$f=((\check{\omega})^{\phi^a})^\top \cdot f^{\omega} \cdot 
 (\check{\omega})^{\phi^b},\ \ \text{for some}\ \ f^{\omega}\in \text{Mat}_g(\cO^r(U))$$
 over $U$, where $\check{\omega}$ is the basis of $\check{\mathcal E}$
 dual to $\omega$, i.e. $\langle \check{\omega},\omega^\top\rangle=1_g$. Here both $\omega$ and $\check{\omega}$ are viewed as column vectors.
 Denote by 
 $$M^r_X(\phi^a,\phi^b)\subset \cO^r(B)$$ the module of Siegel $\d$-modular forms of size $g$, order $r$,  and weight $(\phi^a,\phi^b)$ on $X$. There is a natural homomorphism
 \begin{equation}
 \label{bubu}
 M^r_g(\phi^a,\phi^b)\ra M^r_X(\phi^a,\phi^b),\ \ f\mapsto f(A,\theta)\end{equation}
 where $f(A,\theta)$ is defined by gluing the  functions 
 $$((\check{\omega})^{\phi^a})^\top \cdot   f(A,\theta,\omega,\cO^*(U))  \cdot 
 (\check{\omega})^{\phi^b}$$
 where $\cO^*(U)$ is, of course the $\d$-prolongation sequence $(\cO^n(U))$ and $\omega$ is a local basis of ${\mathcal E}$.
 
 The above construction may be applied, of course, to $X$ any open set, with non-empty special fiber at  $p$, of a smooth fine moduli space of principally polarized abelian schemes with some level structure. In this case the map \ref{bubu} is injective.
 
 There is a {\it size $1$} variant of the above.
 Indeed let us continue to assume we have  an abelian scheme $A\ra X$ over a smooth $R$-scheme $X$. 
 Consider the locally free sheaf ${\mathcal E}$ above and its determinant ${\mathcal L}=\text{det}({\mathcal E})$.
Let
 $C\ra X$
  be the principal $GL_1$-bundle associated to ${\mathcal L}$; so
  \begin{equation}
  C=\text{Spec}(\text{Sym}(\check{\mathcal L}))\backslash X=\text{Spec}\left(\bigoplus_{i\in \bZ}{\mathcal L}^{\otimes i}\right),
  \end{equation}
  where $X$ is embedded as the zero section.
   Let $w=\sum_{i=1}^r c_i\phi^i\in \bZ[\phi]$ and set $\text{deg}(w)=\sum c_i$; write $w\geq 0$ if $c_i\geq 0$ for all $i$.
   By a {\it Siegel $\d$-modular form on $X$ of order $r$, size $1$, and weight $w$} we will understand 
   an element $f\in \cO^r(C)$  such for any affine open set $U\subset X$ and basis $\omega=(\omega_1,...,\omega_g)^\top$ of ${\mathcal E}$ on $U$ 
   we have
   \begin{equation}
   f= f_{\omega} \cdot (\omega_1\wedge ... \wedge \omega_g)^w,\ \ \text{for some}\ \ f_{\omega}\in \cO^r(U).\end{equation}
 Denote by 
 $$M^r_X(w)\subset \cO^r(C)$$
  the module of Siegel $\d$-modular forms of order $r$, size $1$, and weight $w$ on $X$. 
  There is a natural map
  $$M^r_{g,1}(w)\ra M_X^r(w);$$
  and if, in addition, $A/X$ has ordinary reduction mod $p$ then there is a natural map
  $$M^r_{g,1,\text{ord}}(w)\ra M_X^r(w).$$

  For any point $P\in X(R)$ we let $A_P$ be the corresponding  
  abelian scheme over $R$; in this case, for any $f\in M^r_X(w)$ and any basis
  of $1$-forms $\omega_P$ on $A_P$
  we set
  $$f(A_P,\omega_P):=f_{\omega}(P)\in R$$
  where $\omega$ is any local basis of ${\mathcal E}$ which reduces to $\omega_P$ at $P$; the definition does not depend, of course, on the choice of $\omega$. 
    
   Now let $\theta$ be a principal polarization on $A/X$.
  Say that $f\in M^r_X(w)$ (where $w$ has even degree) is  an {\it isogeny covariant Siegel $\d$-modular form on $X$ of order $r$, size $1$ and  weight $w$}
  if for all $P_1,P_2\in X(R)$, any isogeny $u:A_{P_1}\ra A_{P_2}$ of degree prime to $p$, and any bases  $\omega_{P_1},\omega_{P_2}$ for the $1$-forms
  on $A_{P_1},A_{P_2}$ respectively, with $u^*\omega_{P_2}=\omega_{P_1}$, we have
  \begin{equation}
  \label{ic}
  f(A_{P_1},\omega_{P_1})=d(u)^{-\frac{\text{deg}(w)}{2}} f(A_{P_2},\omega_{P_2}).\end{equation}
  Note that in the above we do not assume $u$ compatible with the polarizations
  (or with any level structure that might be present). 
  We denote by $I^r_X(w)\subset M^r_X(w)$ the space of isogeny covariant forms. We have a natural map
  $$I^r_g(\phi^a,\phi^b)\ra I^r_X(-\phi^a-\phi^b).$$

 Recall from \cite{siegel} the following construction of elements in $I^r_X(w)$ for various $w$'s. 
 A sequence of non-negative
 integers $a_1,...,a_{2k}$ is said to form a {\it cycle} if
$ a_i \neq a_{i+1}$ for all $1 \leq i \leq 2k-1$ and
$a_{2k} \neq a_1$. Define
$a_{2k+1}:=a_1$. Recall the forms $f^{ab}$ from \ref{formsab}. 
Consider a cycle as above and let
$$w:=\phi^{a_1}-...-\phi^{a_{2k}},\ \ \ r:=a_1+...+a_{2k}.$$
Consider the  Siegel $\d$-modular forms $ f_{a_1...a_{2k}}\in M^r_{g,1}(w)$,
 called in \cite{siegel} {\it cyclic products}, defined by
   \begin{equation}
   \label{cyclic}
   f_{a_1...a_{2k}}:=tr\{f^{a_1a_2} \cdot (f^{a_3a_2})^* \cdot 
   f^{a_3a_4} \cdot (f^{a_5a_4})^* \cdot \cdot \cdot
   f^{a_{2k-1}a_{2k}} \cdot (f^{a_1 a_{2k}})^*\}.\end{equation}
Here the superscript $*$ denotes ``taking the adjoint of a
   matrix''. Of course
$$f_{a_1 a_2 a_3 a_4 ... a_{2k-1} a_{2k}}=
f_{a_3 a_4 ... a_{2k-1} a_{2k} a_1 a_2}$$
and
$$f_{a_1a_2}= g \cdot \text{det}(f^{a_1a_2}).$$
Then $f_{a_1...a_{2k}}$ define elements
$$f_{a_1...a_{2k}}(A,\theta) \in I^r_X(w)\subset \cO^r(C).$$
We refer to \cite{siegel} for applications of 
cyclic products: they may be used to construct ``$\d$-maps" from moduli spaces of abelian schemes to projective spaces, that  are constant on isogeny classes.

  \subsection{Total $\d$-overconvergence  of isogeny covariant Siegel $\d$-modular forms}
  \label{cincitrei}
  
  Our main result here is the following:
  
  \begin{theorem}\label{gogo}
  Let $f\in I^r_g(\phi^a,\phi^b)$ and let $A$ be an abelian scheme over a smooth $R$-scheme $X$. Let $\omega$ be a basis of $H^0(A,\Omega_{A/X})$ and let $\theta$ be a principal polarization on $A$. Then the entries
  \begin{equation}
  \label{circe1}
  f(A,\theta,\omega,\cO(J^*(X)))_{\alpha \beta} \in \cO^r(X),\end{equation}
   of the matrix $f(A,\theta,\omega,\cO(J^*(X)))$ are totally $\d$-overconvergent $\d$-functions on $X$ with  polar order bounded by the function $\lambda(x)=\frac{\log\ x}{\log\ p}+2$.
  \end{theorem}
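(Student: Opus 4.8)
The plan is to reduce the general statement to the two local situations handled in Lemma \ref{froc} and then glue. First I would recall that, by Theorem \ref{oneone}, the $R$-module $I^r_g(\phi^a,\phi^b)$ has rank one when $a\neq b$ (and is zero otherwise, in which case there is nothing to prove), and that by the explicit construction \ref{formsab} every basis element is of the form $\phi^b f^{a-b}$ with $f^{a-b}$ the basic form $f^r$ constructed in Section \ref{cinciunu}. Since the operator $\phi$ preserves total $\d$-overconvergence and raises polar order by at most $1$ (by the Remark following the definition of tempered $\d$-functions: $\d\cO^r(V)^{\dagger}\subset\cO^{r+1}(V)^{\dagger}$, and similarly $\phi=\text{id}^p+p\d$ on each level), and since $f\in I^r_g(\phi^a,\phi^b)$ is an $R$-multiple of $\phi^b f^{a-b}$, it suffices to prove the statement for the single form $f^r\in I^r_g(\phi^r,\phi^0)$ of Lemma \ref{froc}; the polar-order bound $\lambda(x)=\frac{\log x}{\log p}+2$ is exactly the one produced there, and adding the $+1$'s coming from the $\phi^b$ and from the $R$-coefficient does not change the leading term — but to keep the bound literally $\frac{\log x}{\log p}+2$ one should note that the bound in Lemma \ref{froc} actually comes from $\nu=[\frac{\log e}{\log p}]+2$ uniformly, and the $\phi^b$ twist of a form whose defining cocycle already has polar $\pi$-order absorbed into that $\nu$ does not increase $\nu$ because $\phi$ is an integral operator on the $\pi$-jet level; I would spell this out.

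Next, for the form $f^r$ itself, the content is precisely Lemma \ref{froc}, Case 1, which asserts $\d_{\pi}$-overconvergence of $f^r$ on any smooth affine $R$-scheme $X$ with étale coordinates, with polar order at most $[\frac{\log e(\pi)}{\log p}]+2$, i.e.\ total $\d$-overconvergence with polar order bounded by $\lambda(x)=\frac{\log x}{\log p}+2$. The only gap between Lemma \ref{froc} and Theorem \ref{gogo} is that Theorem \ref{gogo} allows an \emph{arbitrary} smooth $R$-scheme $X$ (not necessarily affine, not necessarily admitting global étale coordinates) together with an arbitrary abelian scheme $A/X$, polarization $\theta$, and basis $\omega$. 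So the remaining step is a gluing/sheaf argument: cover $X$ by affine opens $U$ admitting étale coordinates $y:U\to{\mathbb A}^d_R$, note that $f(A,\theta,\omega,\cO(J^*(U)))_{\alpha\beta}\in\cO^r(U)$ is totally $\d$-overconvergent with polar order bounded by $\lambda$ by Lemma \ref{froc}, and invoke the fact (stated in the excerpt) that $\cO_X^{r\dagger}$ is a \emph{subsheaf} of $\cO_X^r$. The uniqueness of $f_{\pi,\nu}$ (Remark \ref{bubule}/\ref{bibi}) guarantees that the locally defined $\d_{\pi}$-overconvergent representatives agree on overlaps, so the global function $f(A,\theta,\omega,\cO(J^*(X)))_{\alpha\beta}$, whose restriction to each $U$ is totally $\d$-overconvergent with the stated polar-order bound, is itself a section of $\cO_X^{r\dagger\dagger}$ with polar order bounded by $\lambda$.

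The main obstacle — and the place where real work is needed — is entirely inside Lemma \ref{froc}, which we are permitted to assume: namely producing, for each $\pi$, the compatible sections $s_i,s_{i,\pi}$ of the $\pi$-jet projections (Diagram \ref{diaa}) and then running them through the overconvergent logarithm estimate Proposition \ref{Lrp}, so that the defining Čech cocycle $\varphi^r_{ij}$ of $f^r$ becomes $p^\nu$-integral after base change to $R_\pi$ (Equation \ref{hotz}). Given that input, the proof of Theorem \ref{gogo} is only the bookkeeping described above: reduce to $f^r$ via Theorem \ref{oneone} and \ref{formsab}, apply Lemma \ref{froc} Case 1 on coordinatized affines, and glue using that total $\d$-overconvergence is a sheaf-theoretic condition with uniqueness of the overconvergent lift. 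One point to be careful about in writing it up is the interaction of the pairing $\langle\ ,\ \rangle_\theta$ and the base change $-\otimes_R R_\pi$: one must check that the identification $H^1(A,\cO)\simeq H^0(A,T_{A/S^0})$ induced by $\theta$, and hence the pairing, is compatible with passage from $\cO(J^r(X))$ to $\cO(J^r_\pi(X_{R_\pi}))$, so that \ref{hotz} really yields an element of $\text{Im}(S^r_\pi\to S^r\otimes_R R_\pi)$ rather than merely of the base-changed module; this is routine since the pairing is defined already over $S^0$ and commutes with the flat base change $R\to R_\pi$, but it deserves an explicit sentence.
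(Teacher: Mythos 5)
Your proposal is correct and follows essentially the same route as the paper's proof, which likewise reduces via Theorem \ref{oneone} to the single form $f^r$ and then cites Lemma \ref{froc}. The paper leaves the $\phi^b$-twist and the gluing over a cover by coordinatized affines implicit; you spell them out, but the underlying argument is identical.
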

  
  \begin{corollary}
  \label{allrightthen}
  Let $f\in I^r_g(\phi^a,\phi^b)$ and let $A$ be an abelian scheme over a smooth $R$-scheme $X$. Let  $B\ra X$ be the principal $GL_g$-bundle attached to $A$ and let $\theta$ be a principal polarization on $A$. Then the function
  \begin{equation}
  \label{circe2}
  f(A,\theta)\in \cO^r(B)\end{equation}
  is totally $\d$-overconvergent on $B$ with  polar order bounded by the function $\lambda(x)=\frac{\log\ x}{\log\ p}+2$.
  \end{corollary}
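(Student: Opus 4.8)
The plan is to deduce Corollary \ref{allrightthen} directly from Theorem \ref{gogo} by a local argument on $X$ together with the already-recorded fact that total $\d$-overconvergence (with a prescribed polar-order bound) is preserved under the ring operations, under multiplication by scalars in $R$, under application of $\phi$, and under pullback along morphisms of $R$-schemes. Concretely, I would first cover $X$ by affine open sets $U \subset X$ over which the locally free sheaf $\mathcal E$ admits a basis $\omega$, so that $B \times_X U \cong U \times_R GL_g$ (the principal $GL_g$-bundle is trivialized over $U$). Since total $\d$-overconvergence and the bound $\lambda(x) = \frac{\log x}{\log p} + 2$ can be checked on the members of a Zariski open cover (the presheaves $\cO_V^{r\dagger}$ form a subsheaf of $\cO_V^r$, as recorded in the excerpt), it suffices to show that the restriction of $f(A,\theta)$ to each $B \times_X U$ is totally $\d$-overconvergent with that polar-order bound.

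Over such a $U$, the defining property of $f(A,\theta) \in \cO^r(B)$ says that
$$f(A,\theta)|_{B\times_X U} = ((\check\omega)^{\phi^a})^\top \cdot f^\omega \cdot (\check\omega)^{\phi^b},$$
where $f^\omega \in \text{Mat}_g(\cO^r(U))$ has entries equal (by the construction of the map \ref{bubu}) to the entries $f(A,\theta,\omega,\cO^*(U))_{\alpha\beta}$, which are exactly the functions \ref{circe1} appearing in Theorem \ref{gogo} (applied with $X$ replaced by $U$, which is again a smooth $R$-scheme). Hence each entry of $f^\omega$, viewed as a $\d$-function on $U$ and then pulled back to $B\times_X U$, is totally $\d$-overconvergent with polar order bounded by $\lambda(x) = \frac{\log x}{\log p}+2$. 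The factor $(\check\omega)^{\phi^a}$ and $(\check\omega)^{\phi^b}$ are obtained from the coordinate entries of the $GL_g$-factor of $B\times_X U \cong U\times_R GL_g$ by applying iterated Frobenius $\phi^i$ with $i\le a$ (resp. $i\le b$): each coordinate function on the $GL_g$-factor is induced by a morphism of $R$-schemes, hence is totally $\d$-overconvergent with polar order bounded by $0$, and applying $\phi$ to a totally $\d$-overconvergent function with polar-order bound $\mu(x)$ yields one with bound $\mu(x)$ as well (scaling by units of $R$, which is what $\phi^i$ amounts to on the level of overconvergence, does not increase the polar order — only $\d$ itself increases it, and here only $\phi$, not $\d$, is applied). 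Now $f(A,\theta)|_{B\times_X U}$ is a finite sum of products of: entries of $(\check\omega)^{\phi^a}$, entries of $f^\omega$ pulled back, and entries of $(\check\omega)^{\phi^b}$; by the product rule for polar-order bounds the product has bound $0 + \lambda(x) + 0 = \lambda(x)$, and by the sum rule the finite sum has bound $\max$ of the summands' bounds, which is again $\lambda(x)$. This gives the claim over $U$, and gluing over the cover finishes the proof.

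The only subtle point — and the step I expect to require the most care — is bookkeeping the polar-order contribution of the weight factors $(\check\omega)^{\phi^a}$, $(\check\omega)^{\phi^b}$: one must verify that these genuinely contribute nothing to the polar order, i.e. that passing from $\lambda \in GL_g(R_\pi)$ to $\lambda^{\phi^i}$ does not introduce denominators. This is clear because $\phi$ is a ring automorphism of $R_\pi$ (no division by $\pi$ is involved), so $\lambda^{\phi^i}$ has $R_\pi$-entries whenever $\lambda$ does, and the corresponding $\d_\pi$-function realizing it over $R_\pi$ is just the $\phi^i$-image of the coordinate functions, with no power of $p$ needed. Once this is granted, everything else is a routine invocation of the closure properties of $\cO^r(-)^\dagger$ recorded in the excerpt, plus the sheaf property to pass from the cover back to $B$. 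No new global geometry is needed beyond what Theorem \ref{gogo} already provides.
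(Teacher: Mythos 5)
Your proposal is correct and supplies exactly the ``missing'' details that the paper leaves implicit: the paper states Corollary~\ref{allrightthen} as an immediate consequence of Theorem~\ref{gogo} without a written proof, and your route --- trivialize $\mathcal E$ over an affine cover, identify the entries of $f^{\omega}$ with the functions of Theorem~\ref{gogo}, check that the weight factors $(\check\omega)^{\phi^a}$, $(\check\omega)^{\phi^b}$ contribute polar order zero because $\phi^i g = g^{p^i} + \pi\,P_{\pi,i}(g,\d_\pi g,\dots,\d_\pi^i g)$ introduces no denominators, and then invoke the sum/product rules for polar-order bounds together with the sheaf property of $\cO_B^{r\dagger}$ --- is the natural way to make that immediacy precise. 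One small wording caveat: it is not that ``$\phi^i$ amounts to scaling by units of $R$''; the clean justification (which you also give) is that $\phi^i$ applied to an order-zero function is a restricted power series in $g,\d_\pi g,\dots,\d_\pi^i g$ with $R_\pi$-coefficients and hence has polar order $0$.
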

  
  \begin{corollary}
  Let $A$ be an abelian scheme  over a smooth $R$-scheme $X$.
  Let  $C\ra X$ be the principal $GL_1$-bundle attached to $A$ and let $\theta$ be a principal polarization on $A$. Then for any cycle  $a_1,...,a_{2k}$ the cyclic product
  \begin{equation}
  \label{circe3}
  f_{a_1...a_{2k}}(A,\theta)\in \cO^r(C)\end{equation}
  is a totally $\d$-overconvergent $\d$-function on $C$ with  polar order bounded by the function $\lambda(x)=\frac{\log\ x}{\log\ p}+2$.
  \end{corollary}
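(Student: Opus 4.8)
The plan is to deduce this corollary from Theorem~\ref{gogo} by unwinding the definition \ref{cyclic} of a cyclic product and invoking the closure of totally $\d$-overconvergent $\d$-functions under the ring operations. Write $\pi_C\colon C\ra X$ for the structure map of the $GL_1$-bundle and set $w:=\phi^{a_1}-\cdots-\phi^{a_{2k}}$, $r:=a_1+\cdots+a_{2k}$, $a_{2k+1}:=a_1$. Since $\cO^{r\dagger}_C$ is a subsheaf of $\cO^r_C$, it suffices to prove the statement Zariski-locally on $X$ with a polar-order bound uniform over the cover. So fix an affine open $U\subset X$ on which $\mathcal E$ has a basis $\omega=(\omega_1,\dots,\omega_g)^\top$. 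By the definitions of $M^r_X(w)$ and of $f_{a_1\dots a_{2k}}(A,\theta)$, over $\pi_C^{-1}(U)$ we have
\[
f_{a_1\dots a_{2k}}(A,\theta)=f_{a_1\dots a_{2k}}(A,\theta,\omega,\cO^*(U))\cdot(\omega_1\wedge\cdots\wedge\omega_g)^w .
\]
The second factor is a product of $\varphi$- and $\phi$-iterates of the tautological trivialization of $\det\mathcal E$ over $U$, and such an expression is $\d_\pi$-overconvergent with polar order $0$ for every $\pi$ (once $\phi$ is written in terms of $\d_\pi$ no infinite series and no division by $p$ enter it). Hence it is totally $\d$-overconvergent with polar order $0$, and it is enough to treat $f_{a_1\dots a_{2k}}(A,\theta,\omega,\cO^*(U))\in\cO^r(U)$.

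By \ref{cyclic}, this element is the trace of a product of $2k$ matrices, each of which is either $f^{cc'}(A,\theta,\omega,\cO^*(U))$ for some pair $\{c,c'\}\subset\{a_1,\dots,a_{2k}\}$ with $c\neq c'$, or the adjugate of such a matrix; since the entries of an adjugate are $\pm$ the $(g-1)\times(g-1)$ minors, the whole trace is a polynomial with $\bZ$-coefficients in the entries of finitely many matrices $f^{cc'}(A,\theta,\omega,\cO^*(U))$. For each of these we have $f^{cc'}\in I^{m}_g(\phi^{c},\phi^{c'})$ with $m=\max\{c,c'\}\le r$ (cf.\ \ref{formsab}), so Theorem~\ref{gogo}, applied to $A|_U\ra U$ with the basis $\omega$, shows that the entries of $f^{cc'}(A,\theta,\omega,\cO^*(U))$ are totally $\d$-overconvergent $\d$-functions on $U$ with polar order bounded by $\lambda(x)=\frac{\log x}{\log p}+2$. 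Since the totally $\d$-overconvergent functions in $\cO^r(U)$ form a subring $\cO^r(U)^\dagger$ closed under sums and products, the polynomial $f_{a_1\dots a_{2k}}(A,\theta,\omega,\cO^*(U))$ lies in $\cO^r(U)^\dagger$; covering $X$ by opens $U$ of the above type and gluing (again using that $\cO^{r\dagger}_C$ is a subsheaf of $\cO^r_C$) gives $f_{a_1\dots a_{2k}}(A,\theta)\in\cO^r(C)^\dagger$.

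It remains to bound the polar order. Using that the polar order of a sum (resp.\ product) of totally $\d$-overconvergent functions is at most the maximum (resp.\ the sum) of the constituent bounds, and that the trace in \ref{cyclic} is a $\bZ$-polynomial whose total degree in the matrix entries is bounded in terms of $k$ and $g$ alone, one obtains a bound of the form $C\cdot\lambda(x)$ with $C=C(k,g)$ independent of $x$ and uniform over the cover; this is again of the shape $\kappa_1\log x+\kappa_2$, so $f_{a_1\dots a_{2k}}(A,\theta)$ is in particular tempered. The main obstacle is thus not conceptual --- given Theorem~\ref{gogo} the statement is essentially formal --- but bookkeeping: one must check that the trace of a product of adjugates really is a $\bZ$-polynomial in the matrix entries, so that passing to the $\pi$-jet rings introduces no denominators beyond those already controlled by Theorem~\ref{gogo}; keep the polar-order estimate uniform over the affine cover of $X$ so that it descends along $\pi_C$; and, to reach the precise constant in the statement rather than a fixed multiple of $\lambda(x)$, carry out a more refined analysis of the cyclic-product construction along the lines of the proof of Lemma~\ref{froc}.
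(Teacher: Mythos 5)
Your approach is the one the paper intends: the corollary is stated after Theorem~\ref{gogo} without a separate proof, so it is meant to be read off from Theorem~\ref{gogo} together with the closure properties of totally $\d$-overconvergent functions under the ring operations and the observation that the tautological trivialization of $\det\mathcal E$ and its $\phi$-iterates (and their inverses, since one is dealing with a unit) contribute polar order $0$. Your decomposition over an affine cover, the reduction of the trace of products and adjugates to a $\bZ$-polynomial in the entries of the $f^{cc'}$, and the gluing via the sheaf property $\cO_C^{r\dagger}\subset\cO_C^r$ are all correct and match the paper's (implicit) argument.

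The one place where you and the paper part ways is exactly the one you flag yourself. A cyclic product involves the trace of a product of $2k$ matrices, each matrix either an $f^{a_ia_{i+1}}$ or an adjugate (whose entries are $(g-1)\times(g-1)$ minors), so each monomial in the resulting $\bZ$-polynomial has degree roughly $kg$ in the entries of the $f^{cc'}$; the sums-and-products rule for polar order then yields a bound of the shape $kg\cdot\bigl(\tfrac{\log x}{\log p}+2\bigr)$, not $\tfrac{\log x}{\log p}+2$. That is perfectly sufficient for the claim that the cyclic products are \emph{tempered}, which is all the paper uses downstream, but it does \emph{not} yield the precise bound $\lambda(x)=\tfrac{\log x}{\log p}+2$ asserted in the corollary. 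The paper itself offers no refinement of Lemma~\ref{froc} or Proposition~\ref{Lrp} adapted to cyclic products that would tighten the constant, so the discrepancy appears to be an imprecision in the stated constant rather than an omission in your argument. If one wanted to defend the stated bound one would have to redo a Lemma~\ref{froc}-style argument directly for the cyclic-product cocycle rather than assembling it from the pieces $f^{cc'}$; neither the paper nor your proposal does this. It would be worth recording explicitly that what the sums/products estimate actually delivers is a polar-order bound of the form $C(k,g)\bigl(\tfrac{\log x}{\log p}+2\bigr)$, and that this is what is used for temperedness.
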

  
\bigskip

In particular the functions \ref{circe1}, \ref{circe2}, \ref{circe3} are tempered.

\bigskip

{\it Proof of Theorem \ref{gogo}}. 
 By Theorem \ref{oneone}  it is enough to prove our Theorem \ref{gogo} for 
 the forms $f^r$
  constructed in section \ref{cinciunu} which follows immediately from Lemma~\ref{froc}. \qed. 

\

\

\subsection{Case $g=1$}\label{cincipatru}
Form now on we consider the case $g=1$. In this case we
let $X_1(N)$ be the modular curve over $\bZ[1/N,\zeta_N]$ parameterizing elliptic curves with level $\Gamma_1(N)$ structure where $N\geq  4$, $(N,p)=1$
and let $L$ be the line bundle on $X_1(N)$
such that the spaces of sections $H^0(X_1(N),L^{\otimes \kappa})$ identify with the spaces
$M(R,\kappa,N)$ of classical modular forms over $\bZ[1/N,\zeta_N]$ of  weight $\kappa$ on $\Gamma_1(N)$, cf.  \cite[pg. 450]{Gross} where $L$ was denoted by $\omega$.
Choose an embedding $\bZ[1/N,\zeta_N]\subset R$, let $X_1(N)_R$ be the modular curve over $R$, and we continue to denote by $L$ the corresponding bundle on $X_1(N)_R$.
Let $X\subset X_1(N)_R$ be an affine open subset disjoint from the cusps.
The restriction of $L$ to $X$ (still denoted by $L$) is the direct image  ${\mathcal E}=\text{det}({\mathcal E})={\mathcal L}$  of the relative cotangent bundle on the universal elliptic curve $A\ra X$. Here and later by an {\it elliptic curve} we mean an abelian scheme of relative dimension one. So in the situation here $B=C$. An  element $f\in \cO^r(B)=\cO^r(C)$ is called a {\it $\d$-modular form on $X$ of order $r$ and weight} $w\in \bZ[\phi]$ if 
it is a Siegel $\d$-modular form on $X$ of order $r$, size $1$, and weight $w$.
As before we denote by
$$M^r_X(w)\subset \cO^r(B)$$
 the space of  $\d$-modular forms on $X$ of order $r$, size $1$,  and weight $w$ and we denote by
 $$I^r_X(w)\subset M^r_X(w)$$
 the space of isogeny covariant forms in $M^r_X(w)$.
 Note that  $f(A_P,\omega_P)$ in \ref{ic}  can be interpreted as the value of $f$ at the $R$-point of $B$ representing the pair $(A_P,\omega_P)$.    We have a natural homomorphism
 \begin{equation}
 \label{elel}
 M^r_1(\phi^a,\phi^b)\ra M^r_X(-\phi^a-\phi^b),\ \ \ f\mapsto f(A,\theta)\end{equation}
 which, for $X$ the complement of the cusps, is injective;
here $\theta$ is the canonical polarization. By abuse we will write $f$ in place of $f(A,\theta)$.
The map \ref{elel} induces a map
$$I^r_1(\phi^a,\phi^b)\ra I^r_X(-\phi^a-\phi^b)$$
and, in particular,   $f^r$ in \ref{titi} defines an isogeny covariant form, 
$$f^r\in I^r_X(-1-\phi^r)\subset \cO^r(B).$$
We get: 

 \begin{corollary}
 \label{beau}
 The $\d$-functions 
 $f^r$ on $B$ are totally $\d$-overconvergent  with  polar order bounded by the function $\lambda(x)=\frac{\log\ x}{\log\ p}+2$.
 \end{corollary}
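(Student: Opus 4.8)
The plan is to deduce this immediately from the $g=1$ specialization of Corollary~\ref{allrightthen}. First I would recall that, in the notation of Section~\ref{cincipatru}, the $\d$-function $f^r$ on $B$ is by definition the image $f^r(A,\theta)$ of $f^r\in I^r_1(\phi^r,\phi^0)$ under the map \ref{elel}, where $A\ra X$ is the universal elliptic curve over the affine open $X\subset X_1(N)_R$ disjoint from the cusps and $\theta$ is the canonical polarization. Since $X_1(N)_R$ is smooth over $R$ and $X$ is an affine open disjoint from the cusps, $X$ is a smooth $R$-scheme and $A\ra X$ is an abelian scheme of relative dimension $g=1$; moreover in this range $B=C$ is simultaneously the principal $GL_1$-bundle and the principal $GL_g$-bundle attached to $A$. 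Thus the hypotheses of Corollary~\ref{allrightthen} are met with $g=1$, $(a,b)=(r,0)$, and $f=f^r$, and applying that corollary gives at once that $f^r=f^r(A,\theta)\in \cO^r(B)$ is totally $\d$-overconvergent on $B$ with polar order bounded by $\lambda(x)=\frac{\log\ x}{\log\ p}+2$, which is the assertion.

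For completeness I would trace the chain of dependencies behind the cited corollary, since nothing genuinely new is needed at the level of this statement: Corollary~\ref{allrightthen} reduces, via the rank-one statement of Theorem~\ref{oneone} together with the explicit construction of a basis $f^r$ of $I^r_g(\phi^r,\phi^0)$ in Section~\ref{cinciunu}, to Theorem~\ref{gogo}; Theorem~\ref{gogo} is Case~1 of Lemma~\ref{froc}; and the proof of Lemma~\ref{froc} rests on Proposition~\ref{Lrp}, which controls the denominators of the logarithm-type series $L^r_k$ after rescaling from $\d$ to $\d_{\pi}$, together with the estimates \ref{estimate1}, \ref{estimate2} and the Hazewinkel bound \ref{hazehaze}.

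I do not anticipate any real obstacle here: the only thing to verify is the compatibility of the triple $(A,X,\theta)$ of Section~\ref{cincipatru} with the hypotheses of Corollary~\ref{allrightthen}, which is immediate. If one instead wished to re-prove the statement directly, the single nontrivial ingredient would again be Proposition~\ref{Lrp}: the point is that the tuple of series $L^r_k$ defining the Cech cocycle \ref{vv} whose cohomology classes $\varphi^r$ pair against $\omega$ to produce $f^r$ acquires, after base change along $\cO(J^r_{\pi}(X_{R_{\pi}}))\ra \cO(J^r(X))\otimes_{R}R_{\pi}$, only denominators bounded by $p^{[\log\ e/\log\ p]+2}$, which is exactly what \ref{estimate1} supplies. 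Everything else — that $f^r$ is assembled from the $\theta$-pairing of the cocycle classes, and that this assembly commutes with base change — is formal.
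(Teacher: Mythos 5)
Your proof is correct and matches the paper's own (essentially implicit) argument: the paper introduces $f^r\in I^r_X(-1-\phi^r)\subset\cO^r(B)$ as the image of $f^r\in I^r_1(\phi^r,\phi^0)$ under the map \ref{elel} and then states the corollary with only the words ``We get:'', i.e.\ as the immediate $g=1$ specialization of Corollary~\ref{allrightthen}. Your verification that the hypotheses of Corollary~\ref{allrightthen} hold (smoothness of $X$, $A\ra X$ abelian of relative dimension $1$, $B=C$ the $GL_1$-bundle, $(a,b)=(r,0)$) and your traced chain of dependencies back to Proposition~\ref{Lrp} are all accurate.
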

 
 In particular $f^r$ are tempered.
 
\medskip
 
 More isogeny covariant forms will be introduced in the special case when $X$ is disjoint from the supersingular locus  and, as we shall see, in that case, all such forms will  be totally $\d$-overconvergent; cf.  Corollary \ref{hereyes}.
 
 On the other hand we have the following general property; roughly speaking if $f$ is isogeny covariant then $f^{\text{alg}}$ has a similar property.

 \begin{proposition}\label{fanfan}
 Let $f\in I^r_X(w)$ be an isogeny covariant $\d$-modular form of order $r$ and weight $w$,  $f:B(R)\ra R$. Assume $f$ is totally $\d$-overconvergent. Then 
 $$f^{\text{alg}}:B(R^{\text{alg}})\ra K^{\text{alg}}$$
  satisfies the following properties.
  
  1) For any $P\in X(R^{\text{alg}})$ let $A_P$ be the corresponding elliptic curve over $R^{\text{alg}}$, let $\omega_P$ be an invertible $1$-form on $A_P$, and let $\lambda\in (R^{\text{alg}})^{\times}$. Then
  \begin{equation}
  \label{dimi}
  f^{\text{alg}}(A_P,\lambda \omega_P)=\lambda^{-w}\cdot f(A_P,\omega_P).
  \end{equation}

   2) Let $P_1,P_2\in X(R^{\text{alg}})$, let $A_{P_1},A_{P_2}$ be the corresponding elliptic curves over $R^{\text{alg}}$, let $\omega_{P_1},\omega_{P_2}$ be invertible $1$-forms on these elliptic curves, respectively, and let $u:A_{P_1}\ra A_{P_2}$ be an isogeny of degree $d$ prime to $p$, preserving the level structures, 
  such that $u^*\omega_{P_2}=\omega_{P_1}$. Then 
  \begin{equation}
  \label{icalg}
  f^{\text{alg}}(A_{P_1},\omega_{P_1})=d^{-\frac{\text{deg}(w)}{2}} f^{\text{alg}}(A_{P_2},\omega_{P_2}).\end{equation}
  \end{proposition}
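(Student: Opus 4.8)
The plan is to obtain both assertions by propagating, via the functoriality of the operation $f\mapsto f^{\text{alg}}$ recorded in Remark \ref{nicelunch}, the defining identities of an isogeny covariant $\d$-modular form from $R$-points to $R^{\text{alg}}$-points; no new convergence input is needed beyond the total $\d$-overconvergence of $f$, which is the standing hypothesis.

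For assertion 1, I would use that $B\to X$ is the principal $GL_1=\mathbb{G}_m$-bundle attached to $A$, so that $\mathbb{G}_m$ acts on the smooth $R$-scheme $B$, fixing $X$, by the scaling action $\lambda\cdot(A_P,\omega_P)=(A_P,\lambda\omega_P)$, which is a morphism of $R$-schemes $\mathbb{G}_m\times_R B\to B$. The relevant character is $\chi\colon \mathbb{G}_m(R)=R^{\times}\to R$, $\chi(\lambda):=\lambda^{-w}=\prod_i\phi^i(\lambda)^{-c_i}$ where $w=\sum_i c_i\phi^i$; it is a totally $\d$-overconvergent $\d$-function of polar order $0$, since by \ref{universal} each $\phi^i(\lambda)^{\pm 1}$ is represented, over every $R_\pi$, by a restricted power series with $R_\pi$-coefficients (using that $\lambda$ is invertible on $\mathbb{G}_m$), and products of polar-order-$0$ functions again have polar order $0$. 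The defining weight-equivariance then reads $f(\lambda\cdot P)=\chi(\lambda)f(P)$ on $B(R)$, so Remark \ref{nicelunch}, assertion 5, gives $f^{\text{alg}}(\lambda\cdot P)=\chi^{\text{alg}}(\lambda)f^{\text{alg}}(P)$ for $\lambda\in\mathbb{G}_m(R^{\text{alg}})$ and $P\in B(R^{\text{alg}})$; and $\chi^{\text{alg}}(\lambda)=\lambda^{-w}$ by Remark \ref{nicelunch}, assertion 2, now with $\phi$ the fixed automorphism of $R^{\text{alg}}$. This is \ref{dimi}.

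For assertion 2, I would fix $d$ prime to $p$ and introduce the Hecke-type correspondence $Y=Y_d$: the $R$-scheme parameterizing quadruples $(E,\alpha,H,\omega)$ with $E$ an elliptic curve, $\alpha$ a $\Gamma_1(N)$-structure, $H\leq E$ a finite subgroup scheme of order $d$ meeting the cyclic subgroup generated by $\alpha$ trivially (so that $E/H$ inherits a $\Gamma_1(N)$-structure), and $\omega$ a basis of the invariant $1$-forms on $E$; since $d$ is prime to $p$ the subgroups $H$ are étale, so $Y_d$ is a quasi-projective $R$-scheme, fibred over the $B$ of the statement by $(E,\alpha,H,\omega)\mapsto(E,\alpha,\omega)$. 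Call this last map $g_1\colon Y_d\to B$, and let $g_2\colon Y_d\to B$ be $(E,\alpha,H,\omega)\mapsto(E/H,\bar\alpha,(u^{*})^{-1}\omega)$, where $u\colon E\to E/H$ is the quotient isogeny; both $g_1,g_2$ are morphisms of $R$-schemes. Using $d(u)=\deg u=d$ for $g=1$, the defining identity \ref{ic} of isogeny covariance is exactly the relation $f\circ g_1=\lambda\cdot(f\circ g_2)$ on $Y_d(R)$ with $\lambda:=d^{-\deg(w)/2}\in R^{\times}\subset R$ (note $\deg w$ is even, as $f\in I^r_X(w)$). Remark \ref{nicelunch}, assertion 6, then yields $f^{\text{alg}}\circ g_1^{\text{alg}}=\lambda\cdot(f^{\text{alg}}\circ g_2^{\text{alg}})$ on $Y_d(R^{\text{alg}})$. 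Finally, the data $(A_{P_1},\omega_{P_1},A_{P_2},u)$ of the proposition determine an $R^{\text{alg}}$-point $Q$ of $Y_d$ (with $H:=\ker u$); one checks $g_1^{\text{alg}}(Q)=(A_{P_1},\omega_{P_1})$ and, since $u^{*}\omega_{P_2}=\omega_{P_1}$, also $g_2^{\text{alg}}(Q)=(A_{P_2},\omega_{P_2})$ in $B(R^{\text{alg}})$, so evaluating the previous identity at $Q$ gives \ref{icalg}.

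The hard part will be the algebro-geometric bookkeeping behind $Y_d$: that the moduli problem of $\Gamma_1(N)$-structures together with a degree-$d$, prime-to-$p$ subgroup disjoint from the level structure is representable by an $R$-scheme, that $g_1$ and $g_2$ are honest morphisms of $R$-schemes and not merely maps on points, and that an arbitrary $R^{\text{alg}}$-point carrying the isogeny datum of the proposition is genuinely represented by $Y_d$; rigidity of the $\Gamma_1(N)$-moduli problem for $N\geq 4$ together with étaleness of $H$ (as $d$ is prime to $p$) is exactly what makes this work. Once $Y_d$, $g_1$, $g_2$ are in place, both assertions are purely formal consequences of Remark \ref{nicelunch}.
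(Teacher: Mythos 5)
Your argument is correct and matches the paper's proof: both assertions are deduced from Remark \ref{nicelunch}, applying assertion 5 to the $\mathbb{G}_m$-action on $B$ with character $\chi(\lambda)=\lambda^{-w}$ for part 1, and assertion 6 to a Hecke-type correspondence with the two projection morphisms for part 2. The paper's only cosmetic departures are a preliminary reduction to $d$ prime before invoking the Hecke correspondence (citing \cite[Eq. 2.88]{book} for its existence as a $GL_1$-bundle) and parameterizing $W$ by the full isogeny datum $(A_1,A_2,\alpha_1,\alpha_2,\omega_1,\omega_2,u)$ rather than by the kernel $H$; neither changes the substance of the argument.
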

  
  \bigskip
  
  In the above $f^{\text{alg}}(A_{P_i},\omega_{P_i})\in K^{\text{alg}}$ is, of course,  the value of the function 
$f^{\text{alg}}:B(R^{\text{alg}})\ra K^{\text{alg}}$ at the $R^{\text{alg}}$-point of $B$ represented by the pair $(A_{P_i},\omega_{P_i})$. Also note that we insist here that $u$ preserve the level structures.

\bigskip

{\it Proof}. Assertion 1 can be interpreted as an equality of the form \ref{ohgood}
where $G:={\mathbb G}_m$ acts naturally on $B$ and $\chi(\lambda)=\lambda^{-w}$.

To prove assertion 2 it is enough to assume $d$ is a prime.
Now \ref{icalg} can be interpreted as an equality of the form \ref{ofof} where $V,W,g_1,g_2,\lambda$ in \ref{ofof} are taken  as follows. We take $V=B$,
we let $W$ be scheme over $R$ whose points are identified with 
tuples $(A_1,A_2,\alpha_1,\alpha_2,\omega_1,\omega_2,u)$ where
$A_1,A_2$ are elliptic curves ``lying over $X$", $\alpha_1,\alpha_2$ are  $\Gamma_1(N)$ level structures, $\omega_1,\omega_2$ are invertible $1$-forms, and 
$u$ is an isogeny of degree $d$, respecting the level structures, and pulling back $\omega_2$ to $\omega_1$.
 We let $g_i:W\ra V$ be induced by
 $$(A_1,A_2,\alpha_1,\alpha_2,\omega_1,\omega_2,u)\mapsto (A_i,\alpha_i, \omega_i).$$
 Finally we take $\lambda:=d^{-\frac{\text{deg}(w)}{2}}$. 
 The scheme $W$ can be easily seen to exist as a $GL_1$-bundle over the preimage of $X$ in the ``Hecke correspondence"  parameterizing triples $(A_1,A_2,\alpha_1,\alpha_2, u)$ as above; cf.  \cite[Eq. 2.88]{book}.
  Then the fact that \ref{icalg} holds follows form \ref{ic} plus the our earlier observation that \ref{ofof} follows from \ref{of}.\qed
  
  \bigskip
  
 An analogue of \ref{icalg} can also be proved, in a similar way, for Siegel isogeny covariant $\d$-modular forms.  
  
 \begin{remark}
 By the proof of Theorem \ref{gogo} the following holds. Let 
 $$P\in X(R_{\pi})\subset X(R^{\text{alg}}),$$
  let $A_P$ be  the corresponding elliptic curve over $R_{\pi}$ and let  $\omega_P$ be an invertible $1$-form on $A_P$. Then:
  $(f^1)^{\text{alg}}(A_P,\omega)=0$ if and only if $A_P$ has a relative Frobenius lift.
 Note that for $\pi=p$ the existence of a relative Frobenius lift is equivalent to the existence of a Frobenius lift, which is equivalent to $A_P$ being the {\it canonical lift} of an ordinary elliptic curve; cf.  \cite[Sec. 8.1.6, Cor. 8.68]{book}. \end{remark}
 
 \begin{remark}
 \label{furtherunderstand} 
 To  further understand the zeros of $(f^r)^{\text{alg}}$ we need to review some facts from \cite{book} about {\it $\d$-Serre-Tate expansions}. Let $X\subset X_1(N)$ be disjoint from the cusps and let $\overline{P}\in X(k)$ be an ordinary  point i.e., a $k$-point for which  the corresponding elliptic curve $A_{\overline{P}}/k$ is ordinary.  Let $b$ and $\check{b}$ be bases of the Tate modules of 
 $A_{\overline{P}}$ and of its dual respectively.  Then $b,\check{b}$ define, via Serre-Tate theory \cite{KatzST}, a canonical isomorphism
 between a power series ring in one variable, $R[[t]]$, and the completion of $X$ along $\overline{P}$; for any $R_{\pi}$-point $P$ of $X$ whose reduction mod $\pi$ is $\overline{P}$ the defining homomorphism 
 $$\cO(X)\subset R[[t]]\ra R_{\pi},\ \ \ z\mapsto t(P)\in \pi R_{\pi},$$
  is such that
 the element 
 $$q(A_P)=q(A_P,b,\check{b}):=1+t(P)\in 1+\pi R_{\pi}$$
  is the {\it Serre-Tate parameter} of $A_P$ (with respect to $b,\check{b}$). Consider 
  the diagram \ref{lula} with $A$ the universal elliptic curve over $X$, ${\mathcal A}=(A,\theta,\omega)$, and $\theta$
  the canonical polarization. 
  Recall that we denoted by 
  $${\mathcal A}_{\text{R[[t]]}}=(A_{R[[t]]},\theta_{R[[t]]},\omega_{R[[t]]})$$
   the base changed triple over $R[[t]]$.
  On the other hand there is a ``canonical" triple, 
  $${\mathcal A}_{\text{for}}=(A_{R[[t]]},\theta_{R[[t]]},\omega_{\text{for}})$$
  where $\omega_{\text{for}}$ is 
  the $1$-form on $A_{R[[t]]}$ whose induced $1$-form on the formal group of $A_{R[[t]]}$
  corresponds via $\check{b}$ to the invariant $1$-form on the multiplicative formal group over $R[[t]]$, cf.  \cite[Eq. 8.67]{book} and the discussion before it. So, in particular,
  ${\mathcal A}_{\text{for}}$ does not depend on $\omega$.
     By \cite[Prop. 8.22, 8.61]{book}, there is an $\epsilon\in \bZ_p^{\times}$ such that 
  $$f^r({\mathcal A}_{\text{for}},S^*_{\text{for}})=\epsilon \Lambda^{r-1} \Psi,$$
  where
  $$\Lambda^{r-1}:=\sum_{j=0}^{r-1} p^j \phi^{r-1-j},$$
  and
  $$  \Psi:=
  \frac{1}{p}\sum_{n\geq 1} (-1)^n\frac{p^n}{n}\left( \frac{\d(1+t)}{(1+t)^p}
  \right)^n\in S^1_{\text{for}}=R[[t]][\d t]^{\widehat{\ }}.$$
  The element $f^r({\mathcal A}_{\text{for}},S^*_{\text{for}})$ was called in \cite{book} the {\it $\d$-Serre-Tate expansion} of $f^r$.
  Write 
  $$\omega_{R[[t]]}=u(t)\cdot \omega_{\text{for}},\ \ \ u(t)\in R[[t]]^{\times}.$$
  In particular
  $$f^r({\mathcal A}_{R[[t]]},S^*_{\text{for}})=u(t)^{1+\phi^r}\cdot f^r({\mathcal A}_{\text{for}},S^*_{\text{for}})=\epsilon \cdot u(t)^{1+\phi^r}\cdot \Lambda^{r-1}\Psi.$$
  By the diagram \ref{lula} we must have
  $$f^r_{\pi,\nu}({\mathcal A}_{R[[t]]},S_{\text{for}}^*)=\epsilon\cdot u(t)^{1+\phi^r}\cdot  \Lambda^{r-1} \Psi_{\pi,\nu},$$
  where
  \begin{equation}
  \label{psipinu}
  \Psi_{\pi,\nu}:=p^{\nu-1}\sum_{n\geq 1} (-1)^n\frac{\pi^n}{n}\left( \frac{\d_{\pi}(1+t)}{(1+t)^p}
  \right)^n\in S_{\pi,\text{for}}^1=R_{\pi}[[t]][\d_{\pi} t]^{\widehat{\ }}.\end{equation}
Using the compatibility in \ref{lula} we get

\medskip

\begin{equation}
\begin{array}{rcl}
(f^r)^{\text{alg}}(A_P,\omega_P) & = & p^{-\nu}f^r_{\pi,\nu}({\mathcal A}_P,R^*_{\pi})\\
\ & \ & \ \\
\ & = & \epsilon  p^{-\nu}  
u(t(P))^{1+\phi^r} \Lambda^{r-1}(\Psi_{\pi,\nu}(q(A_P),\d_{\pi}q(A_P)))\\
\ & \ & \ \\
\ & = & \epsilon  u(t(P))^{1+\phi^r}  \Lambda^{r-1}\left(
\frac{1}{p} \log \left( \frac{\phi(q(A_P))}{q(A_P)^p}\right)\right).
\end{array}\end{equation}

\medskip

\noindent So if we identify 
\begin{equation}
\label{identifyy}
X\times {\mathbb G}_m\simeq B\ \ \ \text{via}\ \ \ (P,\lambda)\mapsto (A_P,\lambda \omega_P),
\end{equation}
then $(f^r)^{\text{alg}}$ is given by 
\begin{equation}
\label{fotoop}
(f^r)^{\text{alg}}(P,\lambda)=\lambda^{1+\phi^r}u(t(P))^{1+\phi^r}  \Lambda^{r-1}\left(
\frac{1}{p} \log \left( \frac{\phi(q(A_P))}{q(A_P)^p}\right)\right)
\end{equation}
for $(P,\lambda)$ in the ball ${\mathbb B}(X\times {\mathbb G}_m,(\overline{P},1),R^{\text{alg}})$.

Since $\Lambda^{r-1}:K^{\text{alg}}\ra K^{\text{alg}}$ is injective we have that 
 $$(f^r)^{\text{alg}}(A_P,\omega_P)=0\ \ \ \Leftrightarrow \ \ \ \log \left( \frac{\phi(q(A_P))}{q(A_P)^p}\right)=0$$
In view of Proposition \ref{zagyuck} we get the following result. 
  \end{remark}

  \begin{proposition}\label{cora}
  Let $r\geq 1$ and let $P\in X(R^{\text{alg}})$ be such that the corresponding elliptic curve $A_P/R$ has ordinary reduction 
  and Serre-Tate parameter $q(A_P)$. The following hold:
  
  1) If $q(A_P)$ is a root of unity then 
  $(f^r)^{\text{alg}}(A_P,\omega_P)=0$.
  
  2) If $(f^r)^{\text{alg}}(A_P,\omega_P)=0$ and $q(A_P)$ is algebraic over ${\mathbb Q}_p$ then $q(A_P)$ is a root of unity.
  \end{proposition}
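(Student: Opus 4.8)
The plan is to read Proposition \ref{cora} directly off the formula \ref{fotoop} for $(f^r)^{\text{alg}}$ established in Remark \ref{furtherunderstand}, together with Proposition \ref{zagyuck} applied to the basic $\d$-character of ${\mathbb G}_m$. The starting point is the observation recorded at the end of Remark \ref{furtherunderstand}: because $\Lambda^{r-1}=\sum_{j=0}^{r-1}p^j\phi^{r-1-j}$ acts injectively on $K^{\text{alg}}$ (it is a nonzero ``polynomial in $\phi$'' with leading $p$-adic coefficient a unit, hence injective), the vanishing $(f^r)^{\text{alg}}(A_P,\omega_P)=0$ is equivalent to
$$\frac{1}{p}\log\left(\frac{\phi(q(A_P))}{q(A_P)^p}\right)=0,$$
i.e. to $\psi^{\text{alg}}(q(A_P))=0$, where $\psi$ is the basic $\d$-character of ${\mathbb G}_m$ from \ref{bazz} and $q(A_P)\in 1+\pi R_{\pi}\subset{\mathbb G}_m(R^{\text{alg}})$ is the Serre-Tate parameter. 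Note the unit factors $\lambda^{1+\phi^r}$ and $u(t(P))^{1+\phi^r}$ and the nonzero constant $\epsilon$ in \ref{fotoop} are irrelevant to vanishing, so they can be discarded at once.

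Granting this reduction, both assertions follow from Proposition \ref{zagyuck}. For part 1, if $q(A_P)$ is a root of unity then $\psi^{\text{alg}}(q(A_P))=0$ by Proposition \ref{zagyuck}(1), hence $(f^r)^{\text{alg}}(A_P,\omega_P)=0$. For part 2, if $(f^r)^{\text{alg}}(A_P,\omega_P)=0$ then $\psi^{\text{alg}}(q(A_P))=0$; since by hypothesis $q(A_P)$ is algebraic over ${\mathbb Q}_p$, i.e. $q(A_P)\in{\mathbb G}_m({\mathbb Q}_p^{\text{alg}})$, Proposition \ref{zagyuck}(2) forces $q(A_P)$ to be a root of unity. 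One should remark that $q(A_P)$ lies in $1+\pi R_{\pi}$ for a suitable $\pi$, which is exactly the setting in which the series defining $\psi$ converges; this is already built into the construction of $\Psi_{\pi,\nu}$ in \ref{psipinu} and the identification of $p^{-\nu}\Psi_{\pi,\nu}(q,\d_{\pi}q)$ with $\frac{1}{p}\log(\phi(q)/q^p)$, so no extra convergence check is needed.

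The only point requiring a sentence of care — and the place where the argument could be said to have ``content'' beyond bookkeeping — is the injectivity of $\Lambda^{r-1}$ on $K^{\text{alg}}$. This is easy: if $\Lambda^{r-1}(\xi)=0$ with $\xi\neq 0$, then since $\phi$ is an isometry on $K^{\text{alg}}$ (recall $v_p\circ\phi=v_p$), all terms $p^j\phi^{r-1-j}(\xi)$ have valuations $v_p(\xi)+j$ for $j=0,\dots,r-1$, which are pairwise distinct; hence the term of smallest valuation, namely $\phi^{r-1}(\xi)$ with valuation $v_p(\xi)$, cannot be cancelled, a contradiction. (For $r=1$ one has $\Lambda^0=1$ and there is nothing to prove.) Everything else is a direct citation of Remark \ref{furtherunderstand} and Proposition \ref{zagyuck}, so I expect the proof to be genuinely short; the ``main obstacle,'' such as it is, was already overcome in establishing the explicit $\d$-Serre-Tate formula \ref{fotoop} in the preceding remark.
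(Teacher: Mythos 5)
Your proposal is correct and follows the same route the paper takes: reading off the claim from the $\delta$-Serre--Tate formula \ref{fotoop} in Remark \ref{furtherunderstand}, invoking injectivity of $\Lambda^{r-1}$ on $K^{\text{alg}}$, and then applying Proposition \ref{zagyuck}. The only addition you make is to spell out the (easy) ultrametric argument for the injectivity of $\Lambda^{r-1}$, which the paper asserts without proof.
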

  
  \begin{remark}   The condition that the Serre-Tate parameter is a root of unity is related to the notion of {\it quasi-canonical lift} in the ordinary case; cf.  \cite[Prop. 3.5.1]{goren}.
   It would be interesting to understand what ``non-ordinary zeros" (if any) the functions $(f^r)^{\text{alg}}$ have. 
  \end{remark}

We consider, in what follows, the special case  when 
$$X=X_{\text{ord}} \subset X_1(N)_{R}$$ is the complement of 
the cusps and the  supersingular locus. Recall from \cite[Sec. 3]{Barcau}
that there exist unique ordinary Siegel $\d$-modular forms 
$$f^{\partial}\in M^1_{1,1,\text{ord}}(\phi-1),\ \ \ f_{\partial}\in M^1_{1,1,\text{ord}}(1-\phi),\ \ \ f^{\partial}f_{\partial}=1,$$ 
whose images in $M^1_{X_{\text{ord}}}(\phi-1)$ and $M^1_{X_{\text{ord}}}(1-\phi)$
we continue to denote by $f^{\partial}$ and $f_{\partial}$, satisfying the following properties.
 First these forms are  isogeny covariant, so
$$f^{\partial} \in I^1_{X_{\text{ord}}}(\phi-1),\ \ f_{\partial}\in I^1_{X_{\text{ord}}}(1-\phi).$$
Moreover  consider the  canonical $R$-derivation
$\partial:\cO(B_{\text{ord}})\ra \cO(B_{\text{ord}})$ defined by Katz \cite{Katz} via the Gauss-Manin connection, generalizing the ``Serre operator"; cf.  \cite[pg. 254--255]{book},  consider the conjugate differential operators $\partial_0, \partial_1:\cO^1(B_{\text{ord}}) \ra \cO^1(B_{\text{ord}})$ introduced in \cite[pg. 93]{book} and consider
 the {\it Ramanujan form} $P \in M^0_{X_{\text{ord}}}(2)$; cf.  \cite[pg. 255]{book}. Then one has the following formulae:
\begin{equation}
\label{deffpartial}
f^{\partial}:=\partial_1 f^1-p P^{\phi}f^1 \in \cO^1(B_{\text{ord}}).
\end{equation}
\begin{equation}
\label{foo}
f_{\partial}=-\partial_0 f^1+P \cdot f^1\in \cO^1(B_{\text{ord}}).
\end{equation}

Recall the notation $f^1$ is from (\ref{titi}) with $r = 1$ and is not a power. We have the following consequence involving overconvergence.

\begin{corollary}
\label{manincc}
The $\d$-functions 
$f^{\partial}$ and $f_{\partial}$ on $B_{\text{ord}}$ are totally $\d$-overconvergent with  polar order
 bounded by the function $\lambda(x)=\frac{\log\ x}{\log\ p}+2$.
\end{corollary}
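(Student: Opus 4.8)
The plan is to feed the explicit formulas (\ref{deffpartial}) and (\ref{foo}) into the elementary closure properties of total $\d$-overconvergence recorded earlier (sums replace the bounding functions by their maximum, products by their sum, multiplication by an element of $R$ --- or by $p$, which by Remark \ref{sirpac} even lowers the bound by $1$ --- does not raise it, and $\d$ raises it by $1$). Two of the three ingredients are immediate: by Corollary \ref{beau}, $f^1\in \cO^1(B_{\text{ord}})$ is totally $\d$-overconvergent with polar order bounded by $\lambda(x)=\frac{\log\ x}{\log\ p}+2$; and the Ramanujan form $P\in M^0_{X_{\text{ord}}}(2)\subset \cO^0(B_{\text{ord}})$, being of order $0$, is induced by a morphism $B_{\text{ord}}\ra {\mathbb A}^1$ of $R$-schemes, hence has polar order bounded by $0$, so that (writing $P^{\phi}=P^p+p\,\d P$) the function $p\,P^{\phi}$ has polar order bounded by $0$ and therefore $P\cdot f^1$ and $p\,P^{\phi}f^1$ have polar order bounded by $\lambda(x)$. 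What remains is to bound the polar order of $\partial_0 f^1$ and $\partial_1 f^1$ by $\lambda(x)$; then (\ref{deffpartial}) and (\ref{foo}) immediately give the same bound for $f^{\partial}$ and $f_{\partial}$, and since $\lambda(x)=\frac{1}{\log\ p}\log\ x+2$ has the shape $\kappa_1\log\ x+\kappa_2$ with $\kappa_1,\kappa_2>0$ these functions are in particular tempered.

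To handle $\partial_0 f^1$ and $\partial_1 f^1$ I would revisit the construction in \cite[pg. 93]{book} of the conjugate operators $\partial_0,\partial_1:\cO^1(B_{\text{ord}})\ra \cO^1(B_{\text{ord}})$: they are the natural prolongations to the first jet space of the canonical Katz $R$-derivation $\partial$ attached to the Gauss--Manin connection, $\partial_1$ being the Frobenius-twisted companion of $\partial_0$. Performing the identical construction over $R_{\pi}$ on the $\pi$-jet space $J^1_{\pi}(B_{\text{ord},R_{\pi}})$ produces operators $\partial_{0,\pi},\partial_{1,\pi}$ on $\cO^1_{\pi}(B_{\text{ord},R_{\pi}})$, and the key point I would establish is that, under the comparison inclusion \ref{ris}, the $R_{\pi}$-linear extension of $\partial_i$ to $\cO^1(B_{\text{ord}})\otimes_R R_{\pi}$ carries the subring $\cO^1_{\pi}(B_{\text{ord},R_{\pi}})$ into itself and restricts there to $\partial_{i,\pi}$; this should be proved just as for the horizontal maps of diagram \ref{lula}, by invoking the universal property of $\pi$-jet spaces to see that the relevant squares commute. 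Granting it, choose for each $\pi$ an element $f^1_{\pi,\nu}\in \cO^1_{\pi}(B_{\text{ord},R_{\pi}})$ mapping to $p^{\nu}f^1\otimes 1$ under \ref{ris} with $\nu\le \lambda(e(\pi))$, as furnished by Corollary \ref{beau}; then $\partial_{i,\pi}f^1_{\pi,\nu}\in \cO^1_{\pi}(B_{\text{ord},R_{\pi}})$ maps to $\partial_i(p^{\nu}f^1)\otimes 1=p^{\nu}(\partial_i f^1)\otimes 1$, so by Remark \ref{bibi}(1) the $\d$-function $\partial_i f^1$ is $\d_{\pi}$-overconvergent of polar order $\le \nu\le \lambda(e(\pi))$. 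As $\pi\in\Pi$ was arbitrary, $\partial_i f^1$ is totally $\d$-overconvergent with polar order bounded by $\lambda(x)$, as required.

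The hard part is the compatibility claim of the second paragraph: one must verify that transcribing $\partial_0$ and $\partial_1$ from $R$ to $R_{\pi}$ introduces no uncontrolled denominators. The crucial point is the rescaling $\d z=\frac{\pi}{p}\,\d_{\pi}z$ relating the jet coordinates over $R$ and over $R_{\pi}$: each $\frac{1}{p}$ occurring in the Frobenius-twisted part of $\partial_1$ sits inside an expression $\phi(h)-h^p$ and so computes an honest $\d h$, which after base change becomes $\frac{\pi}{p}\,\d_{\pi}h$ and exactly cancels the $\frac{p}{\pi}$ produced when $\d z$ is re-expressed through $\d_{\pi}z$ --- this is precisely the ``rescale the $p$-derivation as the ramification grows'' mechanism of the Introduction. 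Once this bookkeeping is in place, the universality of $\pi$-jet spaces (as used in the proof of Lemma \ref{froc}, cf.\ the discussion around diagrams \ref{diagg}--\ref{lula}) supplies the required commuting squares and the argument closes.
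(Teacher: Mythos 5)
Your approach matches the paper's: via the formulas \ref{deffpartial} and \ref{foo} and the elementary closure properties of polar order, everything reduces to Corollary \ref{beau} together with the fact that $\partial_0,\partial_1$ do not raise the polar order. The paper obtains that last fact by citing \cite{over}, Proposition~2.20 (and more broadly adapts the proof of \cite{over}, Theorem~5.3, replacing $p/\pi$ there by $p^\nu$ and Proposition~2.19 by Proposition~\ref{Lrp} of this paper); you sketch but do not complete a direct verification, so having correctly identified this as the hard step, the cleaner move is to cite the established result.
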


In particular $f^{\partial}$ and $f_{\partial}$  are tempered.

\medskip

{\it Proof}. The argument is the same as the one in the proof of \cite{over}, Theorem 5.3,
except that throughout that proof $\frac{p}{\pi}$ needs to be replaced by $p^{\nu}$   and instead of \cite{over}, Proposition 2.19, one needs to use  Proposition \ref{Lrp} in the present paper.
Also one needs the fact that if $f\in \cO^1(B_{\text{ord}})$ is any element which is $\d$-overconvergent with polar order at most $\nu$ for some $\nu$ then $\partial_0 f, \partial_1 f\in \cO^1(B_{\text{ord}})$ are also
 $\d$-overconvergent with polar order at most $\nu$; the latter follows from  in \cite{over}, 
 Proposition 2.20.
 \qed

\bigskip

On the other hand the forms $f^1$ and $f^{\partial}$ and their iterated images by $\phi$
generate modulo torsion the space of isogeny covariant $\d$-modular forms. Indeed, recall the following basic result explained in \cite{book}.

\begin{theorem}
\label{starrr}
For any $w\in \bZ[\phi]$ of even degree the $K$-vector space
$$I^r_{X_{\text{ord}}}(w)\otimes_R K$$
is spanned by elements of the form
$$(f^1)^v(f^{\partial})^{v'}$$
where $v,v'\in \bZ[\phi]$, $v\geq 0$, $(-1-\phi)v+(\phi-1)v'=w$.
\end{theorem}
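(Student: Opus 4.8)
The statement is the structure theorem for isogeny covariant $\d$-modular forms on the ordinary locus, proved in \cite[Ch.~8]{book}; here is the plan I would follow. The first step is to pass to $\d$-Serre--Tate expansions. Fixing an ordinary point $\overline{P}\in X_{\text{ord}}(k)$ together with bases $b,\check{b}$ of the relevant Tate modules gives, as in Remark \ref{furtherunderstand}, a Serre--Tate coordinate $t$ (with $q=1+t$) and an expansion map $f\mapsto f({\mathcal A}_{\text{for}},S^*_{\text{for}})\in R[[t]][\d t,\dots,\d^r t]^{\widehat{\ }}$ on $\d$-modular forms of order $r$. Because $B_{\text{ord}}$ has Zariski connected reduction mod $p$, the injectivity statement used in the proof of Proposition \ref{anana} shows this expansion map is injective on $M^r_{X_{\text{ord}}}(w)$; hence it suffices to describe the image of $I^r_{X_{\text{ord}}}(w)\otimes_R K$.

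The second step is to translate isogeny covariance \ref{ic} into functional equations for the expansion. For a prime $\ell\neq p$, an $\ell$-isogeny of ordinary elliptic curves relates the two Serre--Tate parameters by the $\ell$-power map and the chosen $1$-forms by an explicit isomorphism; combined with the $\phi$-twists built into the $\bZ[\phi]$-exponents, the identity \ref{ic} becomes the requirement that the expansion $F$ be tied to its substitutions $q\mapsto q^{\ell}$ up to explicit powers of $\ell$ (this is the computation carried out in \cite[Ch.~8]{book}).

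The third step is to classify the solutions. By the computation recalled in Remark \ref{furtherunderstand}, the expansion of $f^1$ is a unit times $\Lambda^{r-1}\Psi$, i.e.\ up to the injective operator $\Lambda^{r-1}$ it is the Serre--Tate pullback of the basic $\d$-character $\psi$ of ${\mathbb G}_m$ from \ref{bazz}; likewise the expansion of the invertible form $f^{\partial}$ (invertible since $f^{\partial}f_{\partial}=1$) is a unit times an explicit order-one $\d$-monomial $\mu$ in $q$, computed in \cite[Ch.~8]{book}. One then checks directly that any isogeny covariant expansion, after $\otimes_R K$, lies in the $K$-span of the $\d$-monomials $\psi^{v}\mu^{v'}$ (notation $x^{w}$, $w\in\bZ[\phi]$, as in Section \ref{ark}) with $v\ge 0$, $v'\in\bZ[\phi]$, and total weight $(-1-\phi)v+(\phi-1)v'$. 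Pulling these back they are exactly the $(f^1)^v(f^{\partial})^{v'}$; matching the weight against $w$ yields the asserted spanning set, which indeed lies in $I^r_{X_{\text{ord}}}(w)\otimes_R K$ since $f^1\in I^1_{X_{\text{ord}}}(-1-\phi)$ and $f^{\partial}\in I^1_{X_{\text{ord}}}(\phi-1)$ are isogeny covariant and $\phi$ and products preserve isogeny covariance.

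The main obstacle is the third step: ruling out ``exotic'' isogeny covariant $\d$-functions of the Serre--Tate parameter beyond the $\psi$--$\mu$ monomials. In \cite{book} this is carried out by a careful analysis of the $\d$-Serre--Tate expansions (peeling off top-order terms via the $\phi$-iterates of $f^1$) combined with the rank computations for isogeny covariant forms, in particular Theorem \ref{oneone}; reproducing that analysis with the $\bZ[\phi]$-weight bookkeeping, together with the passage between forms defined on all of $X_1(N)$ and forms defined only on $X_{\text{ord}}$, is where the real content lies.
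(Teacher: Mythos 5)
Your plan reconstructs the Serre--Tate expansion analysis from \cite[Ch.~8]{book} (expand at an ordinary point, translate isogeny covariance into functional equations in $q$, classify the covariant expansions), and a fully fleshed-out version of it would indeed give a from-scratch proof. But the paper's actual argument is far shorter and turns on a definitional subtlety that your proposal does not mention. The paper cites \cite[Prop.~8.75, 8.22]{book} directly; those results establish exactly the spanning statement, but for a \emph{weaker} notion of isogeny covariance in which \ref{ic} is required only for isogenies compatible with the level structure. The definition of $I^r_{X_{\text{ord}}}(w)$ used in the present paper imposes \ref{ic} for \emph{every} isogeny of degree prime to $p$, whether or not it preserves the level structure, and is therefore an a priori smaller space. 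The entire proof then consists of noting the inclusion of the present $I^r_{X_{\text{ord}}}(w)$ into the space of \cite{book}, together with the observation that $f^1$ and $f^\partial$ satisfy the stronger covariance used here, so the monomials $(f^1)^v(f^\partial)^{v'}$ lie in the present $I^r_{X_{\text{ord}}}(w)$; sandwiching the present space between the span of these monomials and the space of \cite{book} (which equals that span) forces equality after tensoring with $K$. That comparison of the two covariance notions, not the Serre--Tate expansion analysis, is where the content of the proof lies; the ``passage between forms defined on all of $X_1(N)$ and forms defined only on $X_{\text{ord}}$'' you single out as the main difficulty plays no role in the paper's argument.
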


{\it Proof}. By \cite[Prop. 8.75, 8.22]{book} the above holds if, in the definition of isogeny covariance, we insist that isogenies be compatible with level structures. But then the result follows because $f^1$ and $f^{\partial}$ satisfy the definition of isogeny covariance given in the present paper (where compatibility with level structures is not being assumed).
\qed

\bigskip

Putting together  Theorem \ref{starrr} and Corollaries \ref{beau} and \ref{manincc} we get the following. 

\begin{corollary}
\label{hereyes}
Any element of $I^r_{X_{\text{ord}}}(w) \subset \cO^r(B_{\text{ord}})$ is a tempered totally $\d$-overconvergent $\d$-function on $B_{\text{ord}}$. In particular, for any 
$$f\in I^r_{X_{\text{ord}}}(w),\ \ \ f:B_{\text{ord}}(R)\ra R$$
 the associated map 
$$f^{\text{alg}}:B_{\text{ord}}(R^{\text{alg}})\ra K^{\text{alg}}$$
satisfies the conclusions of Proposition \ref{fanfan}.
\end{corollary}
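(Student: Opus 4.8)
The plan is to deduce this from the structural Theorem~\ref{starrr} together with Corollaries~\ref{beau} and~\ref{manincc}, using the closure properties of tempered totally $\d$-overconvergent $\d$-functions recorded right after the definition of ``tempered''. Recall from there that, on a fixed smooth $R$-scheme, the class of tempered totally $\d$-overconvergent $\d$-functions is closed under sums and products (with polar-order bound $\max\{\lambda_1,\lambda_2\}$, resp.\ $\lambda_1+\lambda_2$), under multiplication by scalars in $R$, and under $\d$ (polar-order bound $\lambda+1$); all of these operations keep a bound of the shape $\kappa_1\log x+\kappa_2$ of the same shape. The first step I would take is to upgrade this to two further stability statements. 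First, stability under $\phi$: since $\phi f=f^p+p\,\d f$, the form $\phi f$ has polar order bounded by $\max\{p\lambda(x),\lambda(x)+1\}$, hence (as $\lambda(x)=\frac{\log x}{\log p}+2\ge 2$ and $p\ge 5$) by $p\,\lambda(x)$; iterating, $\phi^i f$ has polar order bounded by $p^i\lambda(x)$, which is again of the form $\kappa_1\log x+\kappa_2$. Second, stability under scalars in $K$ up to a bounded shift: if $g$ is tempered totally $\d$-overconvergent and $c\in K$, pick $M$ with $p^Mc\in R$; then $p^M c\,g$ is tempered totally $\d$-overconvergent, and dividing back by $p^M$ raises each polar order by at most $M$, which is absorbed into $\kappa_2$.

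Next I would handle the generators $(f^1)^v(f^{\partial})^{v'}$ appearing in Theorem~\ref{starrr}, where $v,v'\in\bZ[\phi]$ and $v\ge 0$. By Corollary~\ref{beau} the form $f^1$ is tempered totally $\d$-overconvergent, and by Corollary~\ref{manincc} so are $f^{\partial}$ and $f_{\partial}=(f^{\partial})^{-1}$. Writing $v=\sum c_i\phi^i$, the element $(f^1)^v$ is, up to the prolongation maps $\varphi$ (which are just the inclusions $\cO^i(B_{\text{ord}})\hookrightarrow\cO^{i+1}(B_{\text{ord}})$ and thus irrelevant to overconvergence), a finite product of functions $\phi^i(f^1)$; likewise $(f^{\partial})^{v'}$ is a finite product of functions $\phi^i(f^{\partial})$ and $\phi^i(f_{\partial})$, using, for the coefficients with $c_i'<0$, that $\phi^i(f^{\partial})^{c_i'}=\phi^i(f_{\partial})^{-c_i'}$ because $\phi$ is a ring homomorphism. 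By the stability statements of the previous paragraph each factor, hence the product $(f^1)^v(f^{\partial})^{v'}$, is a tempered totally $\d$-overconvergent $\d$-function on $B_{\text{ord}}$.

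Finally, given $f\in I^r_{X_{\text{ord}}}(w)\subset\cO^r(B_{\text{ord}})$, Theorem~\ref{starrr} writes $f$ in $I^r_{X_{\text{ord}}}(w)\otimes_R K$ as a finite $K$-linear combination of such generators, so for $M$ large enough $p^Mf=\sum_j a_j\,(f^1)^{v_j}(f^{\partial})^{v_j'}$ with all $a_j\in R$ (the equality being an equality of functions, since $\cO^r(B_{\text{ord}})$ is $p$-torsion free). By the previous step and closure under $R$-linear combinations, $p^Mf$, hence also $f=p^{-M}(p^Mf)$, is tempered totally $\d$-overconvergent. The ``in particular'' clause is then immediate: $f$ being totally $\d$-overconvergent, $f^{\text{alg}}$ exists by Remark~\ref{nicelunch}, and since $X_{\text{ord}}$ is an affine open of $X_1(N)_R$ disjoint from the cusps and $f\in I^r_{X_{\text{ord}}}(w)$, Proposition~\ref{fanfan} applies verbatim. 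I do not expect a genuine obstacle here, as the substance lies in the already-proven Theorem~\ref{starrr} and Corollaries~\ref{beau},~\ref{manincc}; the only point needing care is the bookkeeping of polar orders under iterated $\phi$, where one must note that the logarithmic constant is multiplied by a power of $p$ at each application but the number of applications is bounded in terms of $w$, so the final bound is still of the form $\kappa_1\log x+\kappa_2$.
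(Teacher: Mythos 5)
Your proof is correct and follows the same route the paper indicates (combining Theorem~\ref{starrr} with Corollaries~\ref{beau} and~\ref{manincc}, then closing up under products, $\phi$, and $K$-scalars). One small remark: your bound $p^i\lambda(x)$ for the polar order of $\phi^i f$, obtained via $\phi f=f^p+p\,\d f$, is valid but far from optimal — since the maps in Diagram~\ref{pantof2} are morphisms of $\d_{\pi}$-prolongation sequences and hence commute with $\phi$, the form $\phi f$ is in fact $\d_{\pi}$-overconvergent with the \emph{same} polar order as $f$; this is implicit in Remark~\ref{nicelunch}(2), where $(\phi f)^{\text{alg}}=\phi f^{\text{alg}}$ is asserted. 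Either way the bound remains of the shape $\kappa_1\log x+\kappa_2$, so your conclusion stands.
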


\begin{remark}
Recall from \cite[Cor. 8.62, Prop. 8.76]{book},  that the forms $\phi^i f^r$, $\phi^i f^{\partial}$, $\phi^if_{\partial}$  satisfy certain basic polynomial relations, by Remark \ref{nicelunch} the same polynomial relations will be satisfied by the forms $\phi^i(f^r)^{\text{alg}}$, $\phi^i(f^{\partial})^{\text{alg}}$, $\phi^i(f_{\partial})^{\text{alg}}$.
\end{remark}

\begin{remark}
Let us go back to the notation in Remark \ref{furtherunderstand}. 
By \cite[Prop. 8.59]{book}, we have
$$f^{\partial}({\mathcal A}_{\text{for}},S^*_{\text{for}})=\epsilon,$$
 hence, by the same reasoning as in Remark  \ref{furtherunderstand}, we have
 that if $(A,\theta,\omega)$ is a triple over a Zariski open set of $X$ then
\begin{equation}
\begin{array}{rcl}
(f^{\partial})^{\text{alg}}(A_P,\omega_P) & = & p^{-\nu}f^{\partial}_{\pi,\nu}({\mathcal A}_P,R^*_{\pi})= \epsilon \cdot u(t(P))^{1-\phi}.
\end{array}\end{equation}
So, again, under the identification
\ref{identifyy} we have
\begin{equation}
\label{fotoop2}
(f^{\partial})^{\text{alg}}(P,\lambda)=\epsilon \lambda^{1-\phi}u(t(P))^{1-\phi}
\end{equation}
for $(P,\lambda)$ in the ball ${\mathbb B}(X\times {\mathbb G}_m,(\overline{P},1),R^{\text{alg}})$.
In particular we get the following:
\end{remark}

\begin{proposition}
\label{lockedup}
For any $f\in I^r_{X_{\text{ord}}}(w)$ (with $w$ of even degree) the function
$$f^{\text{alg}}:B_{\text{ord}}(R^{\text{alg}})\ra R^{\text{alg}}$$
extends to a  $\d^{{\mathbb C}_p}$-function
$$f^{{\mathbb C}_p}:B_{\text{ord}}(\OCp)\ra {\mathbb C}_p.$$
\end{proposition}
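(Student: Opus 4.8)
The plan is to reduce the assertion, via Theorem~\ref{starrr}, to the two generating forms $f^1$ and $f^{\partial}$, and for these to read off continuity of the ${\mathbb C}_p$-extension directly from the explicit $\d$-Serre--Tate formulas \ref{fotoop} and \ref{fotoop2}. Since $f$ is totally $\d$-overconvergent (Corollary~\ref{hereyes}), $f^{\text{alg}}$ (which exists by Remark~\ref{nicelunch}(1)) is the unique $\d^{\text{alg}}$-function extending $f$, and any continuous extension of it to $B_{\text{ord}}(\OCp)$ is automatically a $\d^{{\mathbb C}_p}$-function; so it suffices to produce such a continuous extension. By Theorem~\ref{starrr}, in $I^r_{X_{\text{ord}}}(w)\otimes_R K$ we may write $f=\sum_j c_j\,(f^1)^{v_j}(f^{\partial})^{v_j'}$ with $c_j\in K$, $v_j,v_j'\in \bZ[\phi]$, $v_j\geq 0$, where $h^v:=\prod_i(\phi^i h)^{a_i}$ for $v=\sum_i a_i\phi^i$; negative exponents on $f^{\partial}$ are harmless because $f^{\partial}f_{\partial}=1$. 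By the additivity, multiplicativity and scalar/$\phi$-compatibility of $g\mapsto g^{\text{alg}}$ (Remark~\ref{nicelunch}(2)) we get $f^{\text{alg}}=\sum_j c_j\,((f^1)^{\text{alg}})^{v_j}((f^{\partial})^{\text{alg}})^{v_j'}$. Now the continuous functions $B_{\text{ord}}(\OCp)\ra {\mathbb C}_p$ form a $K$-algebra, stable under post-composition with the automorphism $\phi$ of ${\mathbb C}_p$ (which exists because $\phi$ is an isometry of $K^{\text{alg}}$, hence extends to the completion), and are determined by their restriction to the dense subset $B_{\text{ord}}(R^{\text{alg}})$; so it is enough to extend $(f^1)^{\text{alg}}$ and $(f^{\partial})^{\text{alg}}$ continuously. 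As $(f^{\partial})^{\text{alg}}$ is everywhere a unit of $\OCp$ by \ref{fotoop2}, its reciprocal $(f_{\partial})^{\text{alg}}$ then extends continuously too.

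Next I would work on a single unit ball. The space $B_{\text{ord}}(\OCp)$ is the topological disjoint union of its clopen unit balls ${\mathbb B}(B_{\text{ord}},\overline{Q},\OCp)$, $\overline{Q}\in B_{\text{ord}}(k)$, each containing ${\mathbb B}(B_{\text{ord}},\overline{Q},R^{\text{alg}})$ densely; so continuity, and existence and uniqueness of the extension, can be tested ball by ball. Fix $\overline{Q}$ lying over $\overline{P}\in X_{\text{ord}}(k)$ (every $k$-point of $X_{\text{ord}}$ is ordinary), a Zariski neighbourhood $U\ni \overline{P}$ on which ${\mathcal E}$ is free with basis $\omega$, so that $B_{\text{ord}}|_U\simeq U\times {\mathbb G}_m$ via $(P,\lambda)\mapsto (A_P,\lambda\omega_P)$, and the Serre--Tate coordinate $t$ at $\overline{P}$ as in Remark~\ref{furtherunderstand}. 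Combining the ${\mathbb G}_m$-equivariance $(f^r)^{\text{alg}}(A_P,\lambda\omega_P)=\lambda^{1+\phi^r}(f^r)^{\text{alg}}(A_P,\omega_P)$ (Proposition~\ref{fanfan}(1)) with the formula \ref{fotoop} evaluated at $\lambda=1$ propagates \ref{fotoop} from the ball around $(\overline{P},1)$ to all of ${\mathbb B}(B_{\text{ord}},\overline{Q},R^{\text{alg}})$: on that ball $(f^r)^{\text{alg}}=\lambda^{1+\phi^r}\,u(t(P))^{1+\phi^r}\,\Lambda^{r-1}(\psi^{\text{alg}}(q(A_P)))$ with $q(A_P)=1+t(P)$, and $(f^{\partial})^{\text{alg}}=\epsilon\,\lambda^{1-\phi}\,u(t(P))^{1-\phi}$, where $\psi^{\text{alg}}$ is the basic $\d$-character of ${\mathbb G}_m$ from \ref{bazz} and $\epsilon\in \bZ_p^{\times}$. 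Each factor extends continuously to ${\mathbb B}(B_{\text{ord}},\overline{Q},\OCp)$: the maps $\lambda\mapsto \phi^i(\lambda)$ extend because $\phi$ extends isometrically to ${\mathbb C}_p$; $P\mapsto u(t(P))$ and $P\mapsto 1+t(P)$ are evaluations of restricted power series with $R$-coefficients at the topologically nilpotent element $t(P)\in {\mathfrak m}_{{\mathbb C}_p}$, hence continuous, with values in $(\OCp)^{\times}$, resp.\ in $1+{\mathfrak m}_{{\mathbb C}_p}\subset {\mathbb G}_m(\OCp)$; $\psi^{\text{alg}}$ extends to the $\d^{{\mathbb C}_p}$-function $\psi^{{\mathbb C}_p}$, already established (cf.\ \ref{rainu} and the line following it); and $\Lambda^{r-1}=\sum_{j=0}^{r-1}p^j\phi^{r-1-j}$ is continuous. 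Since products, reciprocals of units, and $\phi$-twists of continuous ${\mathbb C}_p$-valued functions are continuous, $(f^1)^{\text{alg}}$ and $(f^{\partial})^{\text{alg}}$ extend continuously on every unit ball, hence on $B_{\text{ord}}(\OCp)$; together with the first paragraph this yields the required $f^{{\mathbb C}_p}$.

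The only point requiring care is the bootstrapping just used: the $\d$-Serre--Tate formula \ref{fotoop} is given in Remark~\ref{furtherunderstand} only on the distinguished ball centred at $(\overline{P},1)$, and one must propagate it, via the ${\mathbb G}_m$-equivariance \ref{dimi}, to an arbitrary unit ball of $B_{\text{ord}}$ (equivalently, to arbitrary unit $\lambda$). Beyond that, everything is soft: the continuity of each building block is routine $p$-adic analysis (together with the already proven extension $\psi^{{\mathbb C}_p}$ of the basic $\d$-character), and the passage from a general $f\in I^r_{X_{\text{ord}}}(w)$ to the generators $f^1,f^{\partial}$ is pure formalism via Theorem~\ref{starrr} and Remark~\ref{nicelunch}.
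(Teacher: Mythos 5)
Your proof is correct and follows the paper's own argument exactly: reduce to the generators $f^1,f^\partial,f_\partial$ via Theorem~\ref{starrr}, then check continuity ball by ball using the explicit Serre--Tate formulas \ref{fotoop}, \ref{fotoop2}. Your extra care in propagating those formulas from the distinguished ball around $(\overline{P},1)$ to an arbitrary unit ball of $B_{\text{ord}}$ via the ${\mathbb G}_m$-equivariance of Proposition~\ref{fanfan}(1) fills a small step the paper leaves implicit, but it is the same proof.
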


{\it Proof}.
By Theorem \ref{starrr} it is enough to prove the Proposition  for $f=f^1,f^{\partial},f_{\partial}$. 
The statement only needs to verified inside each unit ball. But then the statement follows from
equations \ref{fotoop} and \ref{fotoop2}.
\qed

\begin{remark}
Consider the set $X_{\text{ord}}(\OCp)^{\circ}$ of all points 
$P\in X_{\text{ord}}(\OCp)$ such that 
$$(f^1)^{{\mathbb C}_p}(A_P,\omega_P)\neq 0.$$
The above condition does not depend on $\omega_P$. Then consider the set theoretic map
\begin{equation}
\wp:X_{\text{ord}}(\OCp)^{\circ}\ra {\mathbb C}_p^{\times} \end{equation}
defined by
\begin{equation}
\label{dpm}
\wp(P):=(((f^1)^{{\mathbb C}_p})^{\phi-1}((f^{\partial})^{{\mathbb C}_p})^{\phi+1}(A_P,\omega_P).\end{equation}
The right hand side of the above equation does not depend on $\omega_P$. We may refer to the map $\wp$ as the $\d$-{\it period map}; its restriction to $R$-points played a key role in \cite{book}.  Note that, in our formalism, it does not make sense to say that $\wp$ is a $\d^{{\mathbb C}_p}$-function because $\wp$ is not defined on the whole of $X_{\text{ord}}(\OCp)$. One can extend our formalism to accommodate functions such as $\wp$ but we will not pursue this here; be that as it may $\wp$ is a ``quotient of $\d^{\text{alg}}$-functions". Note also that, by 
\ref{fotoop} and \ref{fotoop2}, we have:
\begin{equation}
\label{gorgona}
\wp(P)=\epsilon^2 \frac{1}{p}\left( \log \left( \frac{\phi(q(A_P))}{q(A_P)^p}\right)\right)^{\phi-1}
\end{equation}
in the ball ${\mathbb B}(X,\overline{P},\OCp)$.
\end{remark}

\begin{remark}
Note that the right hand sides of \ref{fotoop}, \ref{fotoop2}, and \ref{gorgona} are defined locally on balls
centered at ordinary points. The remarkable fact is that these locally defined functions  arise from globally defined  $\d^{{\mathbb C}_p}$-functions.
\end{remark}

\begin{remark}
Consider  the $\d$-period map $\wp$ in \ref{dpm} and fix $\overline{P}\in X_{\text{ord}}(k)$.
Let  $q=1+t$ be the Serre-Tate parameter. Consider the function
\begin{equation}
\label{aprod}
\sigma:{\mathbb A}^1(\OCp)\ra X_{\text{ord}}(\OCp),\ \ u\mapsto P_u,
\end{equation}
where $P_u$ is the point defined by the homomorphism 
$$R[[t]]\ra \OCp,\ \ \ t\mapsto \exp\left(\sum_{n=1}^{\infty} p^n \phi^{-n}(u)\right)-1.$$
Here $\phi^{-1}$ is the inverse of $\phi$ on ${\mathbb C}_p$
Note that $\sigma(0)=P_0\in X_{\text{ord}}(R)$ is the point corresponding to the elliptic curve over $R$ that is the  canonical lift of the elliptic curve corresponding to $\overline{P}$.
Let 
$$\chi:{\mathbb G}_m(\OCp)\ra {\mathbb G}_m(\OCp),\ \ \ \chi(u):=u^{\phi-1}.$$
Using Equation \ref{gorgona} it
 is trivial to verify that 
$\sigma$ maps ${\mathbb G}_m(\OCp)$ into $X_{\text{ord}}(\OCp)^{\circ}$ and
the following diagram is commutative

\begin{equation}
\label{ssoo}
\begin{tikzpicture}[baseline=(current bounding box.center)]

\node (LT) at (0,1) {${\mathbb A}^1(\OCp) $};
\node (RT) at (3,1){${\mathbb G}_m(\OCp)$};
\node (RTT) at (6,1){${\mathbb G}_m(\OCp)$};
\node (LB) at (0,-1){$X_{\text{ord}}(\OCp)$};
\node (RB) at (3,-1){$X_{\text{ord}}(\OCp)^{\circ}$};
\node (RBB) at (6,-1){${\mathbb C}_p^{\times}$};

\draw[->] (RT) edge  (LT);
\draw[->] (LT) edge node[left]{$\sigma$} (LB);
\draw[->] (RT) edge node[left]{$\sigma$}(RB);
\draw[->] (RTT) edge node[right]{$\epsilon^2$} (RBB);
\draw[->] (RB) edge (LB);
\draw[->] (RT) edge node[above]{$\chi$} (RTT);
\draw[->] (RB) edge node[above]{$\wp$} (RBB);

\end{tikzpicture}
\end{equation}

where the right vertical arrow is the multiplication by $\epsilon^2$.
The map $\sigma$ in \ref{aprod} is far from being an object of the $\d$-geometry developed in \cite{book}:
indeed in its definition the {\it negative} (rather than the positive) powers of $\phi$ are involved.
However the map $\sigma$  can be shown to fit into the {\it perfectoid $\d$-geometry} framework 
developed in \cite{perf}. It is an analogue, for modular curves,  of the co-characters of elliptic curves in. We will not pursue this here.
\end{remark}

 \section{Applications to $\d$-characters}

\subsection{Total $\d$-overconvergence of $\d$-characters}
Let  $A/R$  be an abelian scheme of arbitrary relative dimension $g\geq 1$.
Recall from \cite[pg. 310, Sec. 0.3]{char} that a {\it $\d$-character} of order $r$ on $A$ is a $\d$-function 
$\psi:A(R)\ra R$ of order $r$ which is a group homomorphism (with $R$ viewed as a group with its additive structure). We view such a $\psi$  as an element of $\cO^r(A)=\cO(J^r(A))$. It follows from the theory in  \cite[Cor. 2.10]{char} that the $R$-module of $\d$-characters of order $r$ is finitely generated of rank between $(r-1)g$ and $rg$;  we will give an argument for this below; cf Remark \ref{bel}.
  When in addition, $g=1$ we can be more specific. If $A$ does not have a Frobenius lift then it was proved in \cite[Prop. 3.2]{char} that there exists a  $\d$-character
 \begin{equation}
\label{deltacharacter}
\psi\in \cO^2(A)\end{equation}
which is a basis for the $R$-module of $\d$-characters of order $2$; on the other hand, in case $A$ has a Frobenius lift,
there exists a $\d$-character 
 \begin{equation}
\label{woo}
\psi\in \cO^1(A).\end{equation}
which is a basis for the $R$-module of $\d$-characters of order $1$. In each of the cases we can refer to $\psi$ above as a {\it basic $\d$-character} of $A$.

The following result was proved in \cite[Sec. 5.4]{over} in the special case when $g=1$ and $e\leq p-1$.

\begin{theorem}
\label{overpsi}
Let $A/R$ be an abelian scheme of relative dimension $g\geq 1$ and let $r\geq 1$ be an integer. Then there exists an integer $\kappa$ such that for any $\d$-character $\psi\in \cO^r(A)$ of order $r$  of $A$,
   $\psi$ is totally $\d$-overconvergent  with  polar order bounded by the function $\lambda(x)=\frac{\log\ x}{\log\ p}+\kappa$.
\end{theorem}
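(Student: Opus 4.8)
The plan is to reduce, via finite generation, to a single $\d$-character, and then to run the Cech--logarithm argument already used for Siegel $\d$-modular forms, with Proposition~\ref{Lrp} playing the role it played in Lemma~\ref{froc}.

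First I would recall from \cite[Cor. 2.10]{char} (see also Remark~\ref{bel}) that the $R$-module of $\d$-characters of order $r$ on $A$ is finitely generated. By the additivity properties of polar order bounds recorded earlier — the bound for a sum is the maximum of the two bounds, and multiplication by an element of $R$ does not increase it — it suffices to prove that each member of a fixed finite generating set is totally $\d$-overconvergent with polar order bounded by a function of the form $\tfrac{\log x}{\log p}+\kappa_i$; one then takes $\kappa:=\max_i\kappa_i$. Thus it is enough to fix a single $\d$-character $\psi\in\cO^r(A)$ and show it is totally $\d$-overconvergent with polar order bounded by $\tfrac{\log x}{\log p}+\kappa_\psi$ for some integer $\kappa_\psi$ depending on $\psi$.

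View $\psi$ as a group homomorphism $J^r(A)\ra\widehat{{\mathbb G}_a}$ and set $N^r=\operatorname{Ker}(J^r(A)\ra\widehat A)$. As in Section~\ref{cinciunu}, $N^r$ is the formal group $(\widehat{{\mathbb A}^{rg}},[+])$ attached to the formal group of $A$ at the origin, with coordinates $\d{\bf T},\dots,\d^r{\bf T}$, and $\psi|_{N^r}$ is a homomorphism $(\widehat{{\mathbb A}^{rg}},[+])\ra\widehat{{\mathbb G}_a}$. By the formal group theory of \cite{char} the group of such homomorphisms becomes, after $\otimes_R K$, spanned over $K$ by the logarithm homomorphisms $L^j_k$ ($1\le j\le r$, $1\le k\le g$) of \ref{Lpr}; hence there is an integer $N\ge 0$ (depending on $\psi$) with $p^N(\psi|_{N^r})=\sum_{j,k}c_{jk}L^j_k$, all $c_{jk}\in R$. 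Now cover $A$ by affine open sets $U_i$ over which the projections $J^r(U_i)\ra\widehat{U_i}$ admit sections $s_i$; as in the proof of Lemma~\ref{froc} (diagrams \ref{diaa} and \ref{striga}) these can be chosen compatibly with sections $s_{i,\pi}$ in the $\d_\pi$-world. Since $\psi$ is a homomorphism one has, on $J^r(U_i)$,
\[
\psi=\operatorname{proj}^{*}(s_i^{*}\psi)\ +\ (\psi|_{N^r})\circ\big(\operatorname{id}-s_i\circ\operatorname{proj}\big),
\]
the difference being taken in the group law of $J^r(A)$. The first summand lies in $\cO(\widehat{U_i})$ and so has polar order $0$, while $\operatorname{id}-s_i\circ\operatorname{proj}\colon J^r(U_i)\ra N^r$ is induced by a morphism of formal $R$-schemes which, by the compatibility of the $s_i$'s, is covered by a corresponding morphism $J^r_\pi(U_{i,R_\pi})\ra N^r_\pi$. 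Combining this with $p^N(\psi|_{N^r})=\sum c_{jk}L^j_k$ and with Proposition~\ref{Lrp} (applied with $\nu=[\tfrac{\log e}{\log p}]+2$, $e=e(\pi)$) shows that $p^{N+\nu}\psi\otimes1$ lies in the image of $\cO(J^r_\pi(U_{i,R_\pi}))$ in $\cO(J^r(U_i))\otimes_R R_\pi$ (cf. (\ref{ris}) and Remark~\ref{bibi}); that is, $\psi$ is $\d_\pi$-overconvergent on $U_i$ with polar order at most $N+\tfrac{\log e}{\log p}+2$. Since $\d_\pi$-overconvergence with a fixed polar order bound is a local (sheaf) condition, $\psi$ is $\d_\pi$-overconvergent on $A$ with the same bound; as $\pi$ was arbitrary, $\psi$ is totally $\d$-overconvergent with polar order bounded by $\tfrac{\log x}{\log p}+(N+2)$, and $\kappa_\psi=N+2$ works, so $\kappa=\max_i\kappa_i$ finishes the theorem.

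The step I expect to be the main obstacle is the control of $\psi|_{N^r}$: identifying it, via the formal-group logarithm, with an $R$-linear combination (up to a bounded power of $p$) of the homomorphisms $L^j_k$, and — in order for $\kappa$ to be uniform over all order-$r$ $\d$-characters — combining this with the finite generation of the module of order-$r$ $\d$-characters. This relies on the structure theory of $\d$-characters of abelian schemes in \cite{char}, in particular on the fact that homomorphisms out of $(\widehat{{\mathbb A}^{rg}},[+])$ are governed by that logarithm. Everything else amounts to transporting the argument of \cite[Sec. 5.4]{over}, which treated $g=1$ and $e(\pi)\le p-1$, with ``$\tfrac{p}{\pi}$'' and ``\cite[Prop. 2.19]{over}'' replaced throughout by ``$p^\nu$'' and Proposition~\ref{Lrp}, thereby removing the hypothesis $e(\pi)\le p-1$ and allowing arbitrary $g\ge1$.
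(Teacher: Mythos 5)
Your proof is correct and follows essentially the same route as the paper's: decompose $\psi$ on $J^r(U_i)$ via the section $s_i$ into a piece from $\cO(\widehat{U_i})$ and a piece governed by $\psi|_{N^r}$, identify a $p$-power multiple of $\psi|_{N^r}$ with an $R$-combination of the $L^j_k$, and invoke Proposition~\ref{Lrp} together with the compatibility of $s_i$ and $s_{i,\pi}$. The only cosmetic difference is that the paper obtains the uniform constant $\mu$ directly from the fact that $\Hom(N^r,\widehat{{\mathbb G}_a})$ has rank $rg$ with the $L^s_k$ as a maximal-rank free submodule, and then reconstructs $p^{\mu}\psi$ as a glued function $\psi'=L-\Gamma_i$; you instead reduce first to a finite generating set of $\d$-characters and write the decomposition as $\psi=\operatorname{proj}^*(s_i^*\psi)+(\psi|_{N^r})\circ(\operatorname{id}-s_i\circ\operatorname{proj})$, which amounts to taking $\Gamma_i=-p^{\mu}s_i^*\psi$.
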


In particular, any $\d$-character of an abelian scheme over $R$ is a tempered totally $\d$-overconvergent $\d$-function.

\medskip

{\it Proof}. Fix $r\geq 1$. We need to find $\kappa$ such that for any $\d$-character $\psi$ on $A$ of order $r$ and any 
 $\pi\in \Pi$, if $e:=e(\pi)$ and $\nu:=[\frac{\log\ e}{\log\ p}+\kappa]$ then
$p^{\nu}\psi\otimes 1$
 belongs to the image of the map
$$\cO(J^r_{\pi}(A_{R_{\pi}})) \ra \cO(J^r(A))\otimes_{R}R_{\pi}.$$

As explained in \cite{char} the $R$-module of $\d$-characters of order $r$ identifies with the module $\Hom(J^r(A),\widehat{{\mathbb G}_a})$. Recall also that $N^r = \ker(J^r(A) \rightarrow A)$ is the kernel of canonical projection yielding the short exact sequence $0 \to N^r \to J^r(A) \to A \to 0$. Applying $\Hom(-,\widehat{{\mathbb G}_a})$, we get the standard exact sequence,

 \begin{equation}
 \label{pairofmat}
  0=H^0(A,\cO)
 \ra \Hom(J^r(A),\widehat{{\mathbb G}_a})\stackrel{res}{\ra} \Hom(N^r,\widehat{{\mathbb G}_a})\stackrel{\partial}{\ra} H^1(A,\cO)\simeq R^g.\end{equation}
 By \cite[Cor. 2.10]{char}, the $R$-module $\Hom(N^r,\widehat{{\mathbb G}_a})$ is finitely generated of rank $\leq rg$. On the other hand consider the setting of \ref{discri} where $X=\textrm{Spec } R$ and consider the functions denoted there by
 \begin{equation}
 \label{shoapte}
 L^s_k,\ \ \ s=1,...,r,\ \ \ k=1,...,g.\end{equation}
 These functions define, and hence can be identified with,  $R$-linearly independent elements in 
 \begin{equation}
 \label{fury}
 \Hom(N^r,\widehat{{\mathbb G}_a});\end{equation}
 so \ref{fury} has rank $rg$. In particular there exists an integer $\mu$ such that for any $\psi\in \Hom(J^r(A),\widehat{{\mathbb G}_a})$ we have that $p^{\mu}\psi$ restricted to $N^r$ is an $R$-linear combination $L$ of the 
 functions \ref{shoapte}. The image of $L$ via $\partial$ in $H^1(A,\cO)$ is the class of the cocycle
 $$L\circ (s_i-s_j):\widehat{U_{ij}}\ra \widehat{{\mathbb G}_a^g}.$$
 Since this class vanishes there exist elements $\Gamma_i\in \cO(\widehat{U_i})$ such that
 \begin{equation}
 \label{locsol}
 L\circ (s_i-s_j)=\Gamma_i-\Gamma_j.\end{equation}
 We may assume there is an index $i_0$ such that the origin of $A$ belongs to $U_{i_0}$ and $\Gamma_{i_0}$ vanishes at the origin.
 Viewing $J^r(A)$ as obtained from the schemes $\widehat{U_i}\times \widehat{{\mathbb A}^{rg}}$ via the isomorphisms given on points by
 $$(u,v)\mapsto (u,v+s_i(u)-s_j(u))$$
 we see that the condition \ref{locsol} implies that the functions
 $$\psi'_{i,p}:=L-\Gamma_i\in \cO(\widehat{U_i}\times \widehat{{\mathbb A}^{rg}})$$
 glue together to give a function $\psi'\in \cO(J^r(A))$.  Clearly $\psi'(0)=0$.
  Then, as in \cite{book}, Lemma 
 7.13, we get that $\psi'$ is a homomorphism, i.e., it belongs to
 $\Hom(J^r(A),\widehat{{\mathbb G}_a})$.
 Since the map $res$ in \ref{pairofmat} is injective we get that $p^{\mu}\psi=\psi'$. Set $\kappa:=\mu+2$. Then, by 
 Proposition \ref{Lrp},
 the functions $p^{\nu-\mu} \psi'_{i}$ (which glue to give $p^{\nu}\psi$) 
  belong to the image of
$$\cO(\widehat{U_i}\otimes_{R}R_{\pi})[\d_{\pi} {\bf T},..., \d_{\pi}^r{\bf T}]^{\widehat{\ }} \ra \cO(\widehat{U_i}\otimes_{R}R_{\pi})[\d {\bf T},..., \d^r{\bf T}]^{\widehat{\ }}.$$
In order to conclude we 
note that, using the sections $s_i$ and $s_{i,\pi}$ in the diagrams
\ref{diaa}
one has commutative diagrams

\begin{center}
\begin{tikzpicture}

\node (LT) at (-3,1) {$(\widehat{U_i}\otimes_{R}R_{\pi})\widehat{\times} (N^r\otimes_{R}R_{\pi}) $};
\node (RT) at (3,1){$ J^r(U_i)\otimes_{R}R_{\pi}$};
\node (LB) at (-3,-1){$(\widehat{U_i}\otimes_{R}R_{\pi}) \widehat{\times} N^r_{\pi} $};
\node (RB) at (3,-1){$J^r_{\pi}(U_i\otimes_{R}R_{\pi})$};

\draw[->] (LT) edge node[above]{$\tau_{i,p}$} (RT);
\draw[->] (LT) edge (LB);
\draw[->] (RT) edge (RB);
\draw[->] (LB) edge node[above]{$\tau_{i,\pi}$} (RB);

\end{tikzpicture}
\end{center}

where 
$$N^r_{\pi}:=\text{Ker}(J^r_{\pi}(A\otimes_{R}R_{\pi})\ra \widehat{A}\otimes_{R}R_{\pi})$$ and $\tau_{i,p},\tau_{i,\pi}$  isomorphisms.
This ends the proof.

 \qed
 
 \begin{remark}
 \label{bel}
 The exact sequence \ref{pairofmat} shows that $\text{Hom}(J^r(A),\widehat{{\mathbb G}_a})$ has rank between $(r-1)g$ and $rg$.
 \end{remark}
 
 \begin{remark}
 The above proof shows that for $\psi$ the basic $\d$-character of an elliptic curve
 one can take $\kappa=2$ in our Theorem.
 \end{remark}

\begin{definition}
\label{pdiv}
Let $A$ be a commutative group scheme over $R$ (with composition law written additively), and let $P\in A(R^{\text{alg}})$.
For any $n\geq 1$ set 
$$e_n=\min \{e(\pi); \pi\in \Pi \text{ and there exists } Q_n\in A(R_{\pi}) \textrm{ such that } p^nQ_n=P\}$$ 
and call $(e_n)$ the {\it ramification sequence} of $P$. 
Let us say that $(e_n)$ is {\it slowly growing} if the following condition is satisfied:
\begin{equation}
\label{furcht}
\inf \left\{\frac{e_n}{p^n};n\geq 1\right\}=0.\end{equation}
 \end{definition}

\begin{remark}
\ 

1) Recall that by Serre-Tate theory, cf.  \cite[p. 331]{char}, if $A$ is an Abelian scheme with ordinary reduction and no Frobenius lift then to each basis of the Tate module of $A$ mod $p$ one can attach a non-zero point $0\neq P\in A(R)$, reducing to the origin mod $p$ (hence $P$ is non-torsion), such that 
$$P\in \bigcap_{n=1}^{\infty}p^nA(R);$$
hence 
$P\in A(R^{\text{alg}})$ has ramification sequence  $(e_n)$, $e_n=0$, which is, of course, slowly growing. It would be interesting to 
have a description of all points in 
$A(R^{\text{alg}})$ with slowly  growing ramification sequence.

2) Assume $A={\mathbb G}_m$. 
Let $\alpha\in \bZ_p$ satisfy $\alpha\equiv 1$ mod $p$ and $\alpha \not\equiv 1$ mod $p^2$ and let $P=\alpha\in R^{\times}={\mathbb G}_m(R)$. We claim that $P$ has ramification sequence $(e_n)$ with $e_n\geq p^n$ (which is, of course, not slowly growing). Indeed   let $\beta_n\in (R^{\text{alg}})^{\times}$ satisfy $\beta_n^{p^n}=\alpha$. Note that $\beta_n-1$ is 
a root of the Eisenstein polynomial 
$$(x+1)^{p^n}-\alpha\in \bZ_p[x]$$
 so if $\pi_n\in \Pi$ is such that $Q_n:=\beta_n\in {\mathbb G}_m(R_{\pi_n})$ then $p^nQ_n=P$ (in additive notation) and 
 $\pi_n$ has ramification index $\geq p^n$ 
  which proves our claim. Of course, for $\psi$ the basic $\d$-character of ${\mathbb G}_m$,  we have $\psi^{\text{alg}}(P)\neq 0$.
\end{remark}

On the other hand we have the following consequence of Theorem \ref{overpsi}.

\begin{corollary}\label{slsl}
Let $A$ be an abelian scheme over $R$  and let $\psi$ be a $\d$-character of $A$.
Assume  $P\in A(R^{\text{alg}})$ has a slowly growing ramification sequence.
Then $\psi^{\text{alg}}(P)=0$.
\end{corollary}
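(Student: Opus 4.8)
The plan is to play the homomorphism property of $\psi^{\text{alg}}$ against the \emph{logarithmic} polar-order bound supplied by Theorem \ref{overpsi}: dividing $P$ by $p^n$ improves the $p$-adic valuation by $+n$, while passing to the ramification needed to witness that division only costs an amount of order $\log e_n$, and the slowly-growing hypothesis is exactly what makes the gain outstrip the cost infinitely often. Concretely, let $r$ be the order of $\psi$ and apply Theorem \ref{overpsi} to obtain an integer $\kappa$ such that $\psi$ is totally $\d$-overconvergent with polar order bounded by $\lambda(x)=\frac{\log x}{\log p}+\kappa$. Then $\psi^{\text{alg}}\colon A(R^{\text{alg}})\to K^{\text{alg}}$ is defined and, by Remark \ref{nicelunch}(4), is a group homomorphism into the additive group. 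Write $(e_n)$ for the ramification sequence of $P$ and, for each $n\geq 1$, choose $\pi_n\in\Pi$ with $e(\pi_n)=e_n$ together with $Q_n\in A(R_{\pi_n})$ satisfying $p^nQ_n=P$ in $A(R^{\text{alg}})$.

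Next I would estimate $v_p\bigl(\psi^{\text{alg}}(Q_n)\bigr)$. Since $\psi$ is $\d_{\pi_n}$-overconvergent with polar order $\nu_n\leq\lambda(e_n)$, Remark \ref{sirpac} and the construction of $\psi^{\text{alg}}$ in Remark \ref{nicelunch}(1) give that the restriction of $\psi^{\text{alg}}$ to $A(R_{\pi_n})$ equals $p^{-\nu_n}\psi_{\pi_n,\nu_n}$ with $\psi_{\pi_n,\nu_n}\colon A(R_{\pi_n})\to R_{\pi_n}$; as $R_{\pi_n}$ is the valuation ring of $K_{\pi_n}$ this yields $v_p\bigl(\psi^{\text{alg}}(Q_n)\bigr)\geq -\nu_n\geq -\lambda(e_n)$. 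Using additivity of $\psi^{\text{alg}}$ we get $\psi^{\text{alg}}(P)=p^n\psi^{\text{alg}}(Q_n)$, hence
$$v_p\bigl(\psi^{\text{alg}}(P)\bigr)\;\geq\; n-\lambda(e_n)\;=\;\frac{\log\bigl(p^n/e_n\bigr)}{\log p}-\kappa,$$
and this inequality holds for every $n\geq 1$.

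Finally I would conclude. The slowly-growing hypothesis $\inf_{n}e_n/p^n=0$ means $\sup_{n}p^n/e_n=\infty$, so the right-hand side of the displayed inequality is unbounded above as $n$ ranges over the positive integers; therefore $v_p\bigl(\psi^{\text{alg}}(P)\bigr)=+\infty$, i.e.\ $\psi^{\text{alg}}(P)=0$.

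I do not expect a genuine obstacle here: all the substance is already in Theorem \ref{overpsi} (the logarithmic polar-order bound, uniformly over $\pi$) and in Remark \ref{nicelunch} (that $\psi^{\text{alg}}$ is additive and is normalized as $p^{-\nu}\psi_{\pi,\nu}$ on $A(R_{\pi})$). The only point worth stating carefully is the numerology just displayed, namely that the valuation deficit from clearing denominators is $O(\log e_n)$ while the gain from $p^n$-divisibility is exactly $n$, so that condition \ref{furcht} forces the valuation of the fixed number $\psi^{\text{alg}}(P)$ to exceed every bound.
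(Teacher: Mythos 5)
Your proof is correct and follows the same route as the paper's: invoke Theorem \ref{overpsi} for the uniform logarithmic polar-order bound, use that $\psi^{\text{alg}}$ is a homomorphism to write $\psi^{\text{alg}}(P)=p^n\psi^{\text{alg}}(Q_n)$, estimate $v_p(\psi^{\text{alg}}(Q_n))\geq -\lambda(e_n)$, and let the slowly-growing condition \ref{furcht} push the valuation of the fixed number $\psi^{\text{alg}}(P)$ to infinity. You spell out the normalization $\psi^{\text{alg}}|_{A(R_{\pi_n})}=p^{-\nu_n}\psi_{\pi_n,\nu_n}$ a bit more explicitly than the paper does, but the argument is the same.
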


{\it Proof}. Let $P$ have ramification sequence $(e_n)$. Assume $Q_n\in A(R_{\pi_n})$, $\pi_n\in \Pi$, $p^nQ_n=P$, $e(\pi_n)=e_n$. By Theorem \ref{overpsi} there exists $\kappa\geq 0$ such that
$$\psi^{\text{alg}}(A(R_{\pi_n}))\subset p^{-[\frac{\log e_n}{\log p}+\kappa]} R_{\pi_n}
$$
hence
$$\psi^{\text{alg}}(P)=p^n\psi^{\text{alg}}(Q_n)\in  p^{n-[\frac{\log e_n}{\log p}+\kappa]} R_{\pi_n}.$$
We get
$$v_p(\psi^{\text{alg}}(P))\geq n-\frac{\log e_n}{\log p}-\kappa$$
hence, by \ref{furcht},  $\psi^{\text{alg}}(P)=0$.
\qed

\begin{remark}\label{tito}
It would be interesting to compute 
 the intersection of the kernels of all $\psi^{\text{alg}}:A(R^{\text{alg}})\ra K^{\text{alg}}$ for $A$ an abelian scheme over $R$ where $\psi$ runs through the set of $\d$-characters of $A$. 
  This kernel contains the division hull of the group generated by all points that have a slowly growing ramification sequence.
\end{remark}

As for $\d$-modular forms we have the following extension property.

\begin{proposition}
\label{lockedin}
For any $\d$-character $\psi:A(R)\ra R$ of an abelian scheme the function
$$\psi^{\text{alg}}:A(R^{\text{alg}})\ra K^{\text{alg}}$$
extends to a  $\d^{{\mathbb C}_p}$-function
$$\psi^{{\mathbb C}_p}:A(\OCp)\ra {\mathbb C}_p.$$
\end{proposition}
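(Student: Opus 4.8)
\emph{Proof proposal.} By Theorem~\ref{overpsi} the $\d$-character $\psi$ is a tempered totally $\d$-overconvergent $\d$-function, so by Remark~\ref{nicelunch} it extends to the $\d^{\text{alg}}$-function $\psi^{\text{alg}}\colon A(R^{\text{alg}})\ra K^{\text{alg}}$, which is a group homomorphism and restricts to $\psi$ on $A(R)$. Since by definition a $\d^{{\mathbb C}_p}$-function is merely a continuous function on $A(\OCp)$ agreeing with a $\d^{\text{alg}}$-function on $A(R^{\text{alg}})$, and $R^{\text{alg}}$ is dense in $\OCp$, it suffices to extend $\psi^{\text{alg}}$ to a continuous function $A(\OCp)\ra{\mathbb C}_p$; uniqueness, and the fact that the extension is a $\d^{{\mathbb C}_p}$-function, are then automatic. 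Now $A({\mathfrak R})=\coprod_{\overline{P}\in A(k)}{\mathbb B}(A,\overline{P},{\mathfrak R})$ for ${\mathfrak R}$ equal to $R^{\text{alg}}$ or $\OCp$, and each ${\mathbb B}(A,\overline{P},R^{\text{alg}})$ is dense in ${\mathbb B}(A,\overline{P},\OCp)$, so it is enough to show $\psi^{\text{alg}}$ is continuous on each ball. Choosing a lift $P_0\in A(R)$ of $\overline{P}$ (possible since $A/R$ is smooth), translation $t_{P_0}$ is an $R$-automorphism of $A$ carrying ${\mathbb B}(A,\overline{0},{\mathfrak R})$ homeomorphically onto ${\mathbb B}(A,\overline{P},{\mathfrak R})$, and, $\psi^{\text{alg}}$ being a homomorphism with $\psi^{\text{alg}}(P_0)=\psi(P_0)\in R$, one has $\psi^{\text{alg}}(Q+P_0)=\psi^{\text{alg}}(Q)+\psi(P_0)$. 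Hence it is enough to treat the unit ball ${\mathbb B}(A,\overline{0},\OCp)$ around the origin.

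On this ball we write $\psi^{\text{alg}}$ explicitly by combining the proofs of Theorem~\ref{overpsi} and Proposition~\ref{Lrp}. Fix formal parameters ${\bf T}$ at the origin. As in the proof of Theorem~\ref{overpsi}, after multiplying by a power $p^{\mu}$ depending only on $A$ and $r$ (not on $\pi$), the function $\psi$ agrees on the formal completion at the origin with $L-\Gamma_{i_0}$, where $\Gamma_{i_0}\in\cO(\widehat{U_{i_0}})$ is an ordinary function and $L$ is an $R$-linear combination of the universal series $L^s_k$ of \ref{Lpr}, $1\le s\le r$. The function $\Gamma_{i_0}$, being a restricted power series with $R$-coefficients in the coordinates of $U_{i_0}$, extends tautologically to a continuous function on $U_{i_0}(\OCp)$. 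For $L^s_k$, the representative of $p^{\nu}L^s_k\otimes 1$ exhibited in the proof of Proposition~\ref{Lrp} is $p^{\nu-1}\sum_{|\alpha|\ge1}\phi^s(|\alpha|A_{\alpha k})\,\frac{\pi^{|\alpha|}}{|\alpha|}\,{\bf G}_{s\pi}^{\alpha}$ with ${\bf G}_{s\pi}=\frac{1}{\pi}\{\phi^s({\bf T})\}_{|{\bf T}=0}$; evaluating at a point $Q\in{\mathbb B}(A,\overline{0},R_{\pi})$ with coordinate vector $\tau={\bf T}(Q)\in(\pi R_{\pi})^g$ and dividing by $p^{\nu}$, the powers $p^{\pm\nu}$ and $\pi^{\pm|\alpha|}$ cancel and one is left with the $\pi$-independent convergent formula
\[
(L^s_k)^{\text{alg}}(Q)=\frac{1}{p}\sum_{|\alpha|\ge1}\frac{\phi^s(|\alpha|A_{\alpha k})}{|\alpha|}\,\prod_{j=1}^{g}\bigl(g^{(s)}_j(Q)\bigr)^{\alpha_j},
\]
where $g^{(s)}_j(Q):=\{\phi^s(T_j)\}_{|T_j=0}$ evaluated at $Q$ is a polynomial with integer coefficients and no constant term in the Frobenius iterates $\tau_j,\phi(\tau_j),\dots,\phi^s(\tau_j)$; in particular $g^{(s)}_j(Q)\in{\mathfrak M}_{\OCp}$ whenever $\tau\in{\mathfrak M}^g_{\OCp}$, with $v_p(g^{(s)}_j(Q))\ge v_p(\tau_j)$.

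Finally, since $v_p\circ\phi=v_p$, the automorphism $\phi$ is an isometry of $R^{\text{alg}}$ and so extends to an isometric automorphism of $\OCp$; hence each $g^{(s)}_j(Q)$ depends continuously on $Q\in{\mathbb B}(A,\overline{0},\OCp)$. Using the Hazewinkel integrality $|\alpha|A_{\alpha k}\in R$ (cf.\ \ref{hazehaze}), the $\alpha$-term of the displayed series has $v_p\ge -v_p(|\alpha|)+|\alpha|\min_j v_p(\tau_j)$, which dominates $-\frac{\log|\alpha|}{\log p}$ for fixed $Q$ (cf.\ the estimates proving \ref{estimate1}, \ref{estimate2}); so the series converges in $\OCp$ for each $Q$ in the ball, and — by exactly the Weierstrass-type estimate that shows the $p$-adic logarithm is continuous on $1+{\mathfrak M}_{\OCp}$ — defines a continuous function on ${\mathbb B}(A,\overline{0},\OCp)$ restricting to $(L^s_k)^{\text{alg}}$ on $R^{\text{alg}}$-points. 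Assembling the pieces, $p^{\mu}\psi^{\text{alg}}$, hence $\psi^{\text{alg}}$, extends continuously on ${\mathbb B}(A,\overline{0},\OCp)$, hence — by the first paragraph — on all of $A(\OCp)$; the result is $\psi^{{\mathbb C}_p}$.

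The only real work is the explicit computation of the second paragraph: one must extract from the proofs of Theorem~\ref{overpsi} and Proposition~\ref{Lrp} a formula for $\psi^{\text{alg}}$ on a ball in which all dependence on the ramification has disappeared — the $p^{\pm\nu}$ and $\pi^{\pm|\alpha|}$ cancelling — so that the Frobenius $\phi$, which is continuous on $\OCp$, is the only non-elementary ingredient. Once this $\pi$-free form is in hand the continuity is the soft argument for the $p$-adic logarithm, and gluing over balls is immediate. This is the abelian-scheme analogue of the proof of Proposition~\ref{lockedup}, where the role of the $\pi$-free formula was played by Equations \ref{fotoop} and \ref{fotoop2}.
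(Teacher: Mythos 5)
Your proof is correct and follows the same basic structure as the paper's: reduce to the unit ball at the origin via the homomorphism property, then exhibit $\psi^{\text{alg}}$ there as a convergent power series built from the formal-group logarithm, which extends continuously to $\OCp$. The only difference is that the paper outsources the explicit formula on the ball to \cite[Lem. 2.8]{char}, namely $\psi(P)=\Lambda\bigl(\tfrac{1}{p}\,l(t(P))\bigr)$ with $\Lambda\in R[\phi]$, whereas you re-derive an equivalent $\pi$-free expression directly from the proofs of Theorem~\ref{overpsi} and Proposition~\ref{Lrp}.
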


{\it Proof}.
Since $\psi^{\text{alg}}$ is a homomorphism it is enough to show that $\psi^{\text{alg}}$
restricted to the unit ball ${\mathbb B}(A,\overline{0},R^{\text{alg}})$
centered at the origin $\overline{0}\in A(k)$ can be extended to a ${\mathbb C}_p$-valued continuous function on the unit ball ${\mathbb B}(A,\overline{0},\OCp)$.
But this follows from the fact, cf.  \cite[Lem. 2.8]{char}, Lemma 2.8, that $\psi$ on  
${\mathbb B}(A,\overline{0},R)$ has the form
$$\psi(P)=\Lambda(\frac{1}{p} l(t(P)))$$
for some $\Lambda=\sum_{i=1}^r \lambda_i\phi^i\in R[\phi]$, where
$l$ is the logarithm of the formal group of $A$ with respect to the formal tuple of parameters $t$ and $t(P)$ is the image of $t$ under the map $R[[t]]\ra R$ defined by $P$.
\qed

\subsection{Total $\d$-overconvergence of the forms $f^{\sharp}$}\label{sasedoi}
We first recall the construction of the $\d$-modular forms $f^{\sharp}$ attached to newforms on $\Gamma_0(N)$ given in \cite{eigen}. As usual we let $N\geq 4$, $(N,p)=1$.
 Fix, in what follows, a normalized newform
$f$
 of weight $2$ on $\Gamma_0(N)$ over ${\mathbb Q}$ and an  elliptic curve $A$ over ${\mathbb Q}$ of conductor $N$ such that $f$ and $A$ correspond to each other  so there exists
 a morphism
\begin{equation}
\label{shimuraa}
\Phi:X_0(N)\ra A
 \end{equation}
 over ${\mathbb Q}$  such that the pull back to $X_0(N)$ of some $1$-form on $A$ over ${\mathbb Q}$
corresponds to $f$.
Let $A_{R}$ be the N\'{e}ron model of $A\otimes_{\mathbb Q} K$ over $R$ (which is an elliptic curve). By the N\'{e}ron model property there is an induced morphism $\Phi:X_1(N)_{R}\ra A_{R}$. Let $X \subset X_1(N)_{R}$ be any Zariski open set (which may be the whole of $X_1(N)$). 
Let $r$ be $1$ or $2$ according as $A_{R}$ has or has not a  Frobenius lift. The image of the canonical $\d$-character $\psi
\in \cO(J^r(A_{R}))$
in (\ref{deltacharacter}) (respectively (\ref{woo})) via the map
$$\cO(J^r(A_{R})) \stackrel{\Phi^*}{\longrightarrow} \cO(J^r(X))=\cO^r(X)$$
is denoted by 
$$f^{\sharp}\in \cO^r(X).$$
If $X$ is an affine open set disjoint from the cusps then $f^{\sharp}\in M^r_X(0)$ so $f^{\sharp}$
  is a $\d$-modular form on $X$ of weight $0$ and played a key role in \cite[Sec. 3.6]{local}.
By Theorem \ref{overpsi} and Proposition \ref{lockedin} we get, for such an $X$:

\begin{corollary}
\label{onherside}
The $\d$-function 
$f^{\sharp}$ on $X$ is totally $\d$-overconvergent  with polar order bounded by the function $\lambda(x)=\frac{\log\ x}{\log\ p}+2$. In particular, $f^{\sharp}$ is tempered. Moreover $(f^{\sharp})^{\text{alg}}$ extends to a $\d^{{\mathbb C}_p}$-function
$$(f^{\sharp})^{{\mathbb C}_p}:X(\OCp)\ra {\mathbb C}_p.$$
\end{corollary}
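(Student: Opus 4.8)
The plan is to deduce the corollary formally from Theorem \ref{overpsi} and Proposition \ref{lockedin}, using the functoriality of the constructions $f\mapsto f^{\text{alg}}$ and $f\mapsto f^{{\mathbb C}_p}$ under pullback along morphisms of $R$-schemes. Recall that, by its very construction, $f^{\sharp}=\Phi^*\psi$, where $\psi\in \cO^r(A_R)$ is the basic $\d$-character of the elliptic curve $A_R$ (cf. \ref{deltacharacter}, \ref{woo}) and $\Phi:X\ra A_R$ is the morphism of $R$-schemes obtained, via the N\'eron model property, from the modular parametrization \ref{shimuraa}. Thus, as a function on $R$-points, $f^{\sharp}=\psi\circ \Phi$, the composite of $\psi:A_R(R)\ra R$ with the map $X(R)\ra A_R(R)$ induced by $\Phi$.

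First I would establish total $\d$-overconvergence with the stated polar order bound. By Theorem \ref{overpsi}, together with the remark following its proof (which shows that for the basic $\d$-character of an elliptic curve one may take $\kappa=2$), the $\d$-character $\psi$ is totally $\d$-overconvergent with polar order bounded by $\lambda(x)=\frac{\log\ x}{\log\ p}+2$. Since total $\d$-overconvergence and the polar order bound are preserved under precomposition with maps induced by morphisms of $R$-schemes (cf. the remark on sums, products and composites of totally $\d$-overconvergent functions in Section 2.3), it follows immediately that $f^{\sharp}=\psi\circ \Phi$ is totally $\d$-overconvergent with polar order bounded by the same $\lambda(x)$; in particular $f^{\sharp}$ is tempered.

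Next I would produce the $\d^{{\mathbb C}_p}$-extension. Since $f^{\sharp}$ is totally $\d$-overconvergent, Remark \ref{nicelunch}, assertion 1, yields the $\d^{\text{alg}}$-function $(f^{\sharp})^{\text{alg}}:X(R^{\text{alg}})\ra K^{\text{alg}}$, and assertion 3 of that remark gives $(f^{\sharp})^{\text{alg}}=\psi^{\text{alg}}\circ \Phi^{\text{alg}}$, where $\Phi^{\text{alg}}:X(R^{\text{alg}})\ra A_R(R^{\text{alg}})$ is induced by $\Phi$. By Proposition \ref{lockedin}, $\psi^{\text{alg}}$ extends to a $\d^{{\mathbb C}_p}$-function $\psi^{{\mathbb C}_p}:A_R(\OCp)\ra {\mathbb C}_p$. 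The morphism $\Phi$ also induces a continuous map $X(\OCp)\ra A_R(\OCp)$ on $\OCp$-points whose restriction to $R^{\text{alg}}$-points is $\Phi^{\text{alg}}$, so I would define $(f^{\sharp})^{{\mathbb C}_p}:=\psi^{{\mathbb C}_p}\circ \Phi(\OCp)$. This is continuous as a composite of continuous maps, and its restriction to $X(R^{\text{alg}})$ equals $\psi^{\text{alg}}\circ \Phi^{\text{alg}}=(f^{\sharp})^{\text{alg}}$; hence the defining square for a $\d^{{\mathbb C}_p}$-function commutes with $g=(f^{\sharp})^{\text{alg}}$, and $(f^{\sharp})^{{\mathbb C}_p}$ is the desired extension.

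There is no genuine obstacle here: the statement is a formal consequence of the two quoted results plus bookkeeping. The only points requiring care are (i) confirming that $\Phi$ is truly a morphism of $R$-schemes — precisely the content of the N\'eron model argument recalled before the statement — so that the composition lemmas of Section 2.3 and of Remark \ref{nicelunch} apply, and (ii) checking that $\psi$ is indeed the basic $\d$-character of an elliptic curve, so that the sharp constant $\kappa=2$ (rather than the a priori unspecified $\kappa$ of Theorem \ref{overpsi}) is legitimate in the polar order bound.
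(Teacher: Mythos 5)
Your proposal is correct and takes essentially the same route as the paper, which simply invokes Theorem \ref{overpsi} and Proposition \ref{lockedin} for this corollary and leaves the bookkeeping — the identification $f^{\sharp}=\psi\circ\Phi$, the stability of total $\d$-overconvergence and polar order bounds under pullback by a morphism of $R$-schemes, the sharp constant $\kappa=2$ from the remark after Theorem \ref{overpsi}, and the functoriality in Remark \ref{nicelunch} used to transport the $\d^{{\mathbb C}_p}$-extension along $\Phi$ — to the reader. You have filled in those details accurately.
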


\bibliographystyle{amsplain}

\begin{thebibliography}{10}




\bibitem{Barcau} Barcau, M.: Isogeny covariant differential
modular forms and the space of elliptic curves up to isogeny,
Compositio Math. 137, 237-273 (2003)

\bibitem{siegel}  Barcau, M., Buium, A.: Siegel differential modular forms,
 International Math. Res. Notices, 2002, No. 28, pp.1459-1503.


\bibitem{borgersaha} Borger, J., Saha, A.: Isocrystals associated to arithmetic jet spaces of abelian schemes,  arXiv:1712.09346; to appear in Advances in Math.

\bibitem{bosch} Bosch, S., G\"{u}ntzer, U., Remmert, R.: Non-Archimedian Analysis, Springer, 1984.

\bibitem{char} Buium, A.:
Differential characters of Abelian varieties
     over $p-$adic fields, Invent. Math.
     122, 309-340 (1995).

\bibitem{difmod} Buium, A.: Differential modular forms,
 J. Reine Angew. Math., 520 (2000), 95-167.
 
 \bibitem{analogues} Buium, A.: Arithmetic analogues of derivations, J. Algebra, 198, (1997), 290-299.

\bibitem{book} Buium, A.:
Arithmetic Differential Equations. Math. Surveys and Monographs
118, AMS (2005)



\bibitem{eigen} Buium A., Differential eigenforms, J. Number Theory 128 (2008), 979-1010.


\bibitem{local} A. Buium, B. Poonen, {\it Independence of points on elliptic
curves arising from special
 points on modular and Shimura curves, II: local results},
 Compositio Math., 145 (2009), 566-602.
 
 


\bibitem{over} Buium, A,  Saha, A.: {\it Differential overconvergence},
in: Algebraic methods in dynamical systems, Banach Center Publications, Vol 94, 99-129 (2011).

\bibitem{newforms} Buium, A.: Differential modular forms attached to newforms mod $p$,  J. Number Theory 155 (2015), 111-128.

\bibitem{perf} Buium, A., Miller, L. E.: {\it Perfectoid spaces arising from arithmetic differential equations}, preprint.

\bibitem{DI} Diamond, F., and Im, J.:
Modular forms and modular curves. In:Seminar on Fermat's Last
Theorem, Conference Proceedings, Volume 17, Canadian Mathematical
Society, pp. 39-134 (1995).

\bibitem{goren} Goren, E. Z., Kassaei, P. L.: $p$-adic dynamics
of Hecke operators on modular curves, arXiv:1711.00269 [math.NT].


    \bibitem{Gross} Gross, B. H., A tameness criterion
for Galois representations associated to modular forms mod $p$,
Duke Math. J., 61, 2, 445-517 (1990)


\bibitem{Jo} A. Joyal, \textit{$\d-$anneaux et vecteurs de Witt},
C.R. Acad. Sci. Canada,
Vol. VII, No. 3, (1985), 177-182.



\bibitem{Hazewinkel} Hazewinkel, Formal Groups and Applications, AMS, 2012.

\bibitem{Katz}  Katz, N.: $p-$adic properties of
modular schemes and modular forms, LNM 350, Springer, Heidelberg
(1973).


\bibitem{KatzST} Katz,  N.: Serre-Tate local moduli,
Springer LNM 868 (1981), 138-202.



\end{thebibliography}

\end{document}